\theoremstyle{plain}
\newtheorem{theorem}{Theorem}[section]
  \newtheorem{example}[theorem]{Example}
 \newtheorem{lemma}[theorem]{Lemma}
 \newtheorem{proposition}[theorem]{Proposition}
 \theoremstyle{definition}
 \newtheorem{definition}[theorem]{Definition}
 \theoremstyle{remark}
 \newtheorem{remark}[theorem]{Remark}
 \numberwithin{equation}{section}
\def\eps{\varepsilon}
\def\Op{\text{Op}}
\def\Id{\text{\rm Id}}
\def\R{{\mathbb R}}
\def\N{{\mathbb N}}
\def\Z{{\mathbf Z}}
\def\T{{\mathbf T}}
\def\Tend#1#2{\mathop{\longrightarrow}\limits_{#1\rightarrow#2}}
\numberwithin{equation}{section}
\begin{document}
\title[]{Observability and controllability\\
for the Schr\"odinger equation\\
on quotients of groups of Heisenberg type}
\author[]{Clotilde Fermanian Kammerer}
\address[C. Fermanian Kammerer]{Universit\'e Paris Est Cr\'eteil, LAMA, 61, avenue du G\'en\'eral de Gaulle\\
94010 Cr\'eteil Cedex\\ France}
\email{clotilde.fermanian@u-pec.fr}
\author[]{Cyril Letrouit}
\address[C. Letrouit]{Sorbonne Universit\'e, Universit\'e Paris-Diderot, CNRS, Inria, Laboratoire Jacques-Louis Lions, 75005 Paris, France \newline DMA, \'Ecole normale sup\'erieure, CNRS, PSL Research University, 75005 Paris, France}
\email{letrouit@ljll.math.upmc.fr}

\medskip

\maketitle

\begin{abstract}
We give necessary and sufficient conditions for the controllability of a Schr\"odinger equation involving  the sub-Laplacian of a nilmanifold  obtained by taking the quotient of a  group of Heisenberg type by one of its discrete sub-groups.
This class of nilpotent Lie groups is a major example of stratified Lie groups of step 2. The sub-Laplacian involved in these Schr\"odinger equations is subelliptic, and, contrarily to what happens for the usual elliptic Schr\"odinger equation for example on flat tori or on negatively curved manifolds, there exists a minimal time of controllability. The main tools used in the proofs are (operator-valued) semi-classical measures constructed by use of representation theory and a notion of semi-classical wave packets that we introduce here in the context of groups of Heisenberg type.
\end{abstract}

 \tableofcontents

\section{Introduction} \label{s:intro}

In this paper, we consider a {\it nilmanifold}~$M$, that is a
 manifold $M=\widetilde{\Gamma}\backslash G$ which  is the left quotient of a nilpotent Lie group $G$  by a discrete cocompact subgroup $\widetilde{\Gamma}$ of $G$.  We assume here that the Lie group $G$, as a differential manifold, is  an {\it H-type group (also called ``group of Heisenberg type'')}. On the manifold~$M$, we consider the {\it sub-Laplacian} $-\Delta_M$ and  we are interested in the Schr\"odinger operators   $-\frac12\Delta_M-\mathbb V$ for analytic potentials $\mathbb V$. We  study the controllability and the observability of the associated Schr\"odinger equation on~$M$  thanks to  the Harmonic analysis properties of the group~$G$.
 We give in the next section precise definitions about these notions and develop concrete 
  examples  in Section~\ref{s:exampleinheis}.

 \subsection{The nilmanifold $M$ and the Schr\"odinger equation}
 An H-type group~$G$ is  a connected and simply connected nilpotent Lie group whose Lie algebra is an H-type algebra, denoted by $\mathfrak g$.
This means that:
\begin{itemize}
\item $\mathfrak g$ is a step~$2$ stratified Lie algebra: it is equipped with a vector space decomposition
$$
  \displaystyle
  {\mathfrak g}=  \mathfrak v \oplus \mathfrak z \, ,
  $$
  such that $[{\mathfrak v},{\mathfrak v}]= {\mathfrak z}\not=\{0\}$ and ${\mathfrak z}$ is the center of ${\mathfrak g}$. 
  \item 
  $\mathfrak g$ is endowed with a scalar product $\langle \cdot, \cdot\rangle$ such that, for all $\lambda\in\mathfrak z^*$,
  the skew-symmetric map
\begin{equation*}
J_\lambda:\mathfrak{v}\rightarrow \mathfrak{v}
\end{equation*}
defined by
\begin{equation} \label{e:Jlambda}
\langle J_\lambda(U),V\rangle= \lambda([U,V]) \ \ \ \forall U,V\in \mathfrak{v}
\end{equation}
satisfies $J_\lambda^2= -|\lambda|^2 {\rm Id}$. In other words, $J_\lambda$ is an orthogonal map as soon as $|\lambda|=1$. Here, to define $|\lambda|$, we first identify $\mathfrak z^*$ to $\mathfrak z$ thanks to $\langle\cdot,\cdot\rangle$, then we define $|\lambda|$ as the norm (deriving from $\langle\cdot,\cdot\rangle$) of the image of $\lambda$ through this identification.
\end{itemize}
The Lie group $G$, as a differential manifold, is diffeomorphic to $\R^{2d+p}$, where $p$ is the dimension of the center of the group. H-type groups were introduced in \cite{Kap80}, the main motivation being that the sub-Laplacians in these groups  admit explicit fundamental solutions of an elementary form. 
The Heisenberg groups~$\mathbf{H}^d\sim \R^{2d+1}$ are examples of H-type groups (with $p=1$), as will be recalled below. 

\medskip 

We consider $\widetilde{\Gamma}$, a discrete cocompact subgroup of $G$. A concrete example is given  in Example~\ref{ex:general1}. Then, we set $M=\widetilde{\Gamma}\backslash G$.

\medskip

  Via the exponential map
 $$
{\rm Exp} :  {\mathfrak g} \rightarrow G
 $$
 which is a diffeomorphism from ${\mathfrak g}$ to $G$,
 one identifies $G$ and ${\mathfrak g}$ as sets and manifolds. We may identify ${\mathfrak g}$ with the space of left-invariant vector fields via
 \begin{equation}\label{eq:Xf}
 Xf(x)= \left.{d\over dt} f(x{\rm Exp}(tX))\right|_{t=0},
 \end{equation}
 which acts on functions of $x\in G$ and on functions of $x\in M$ since it passes to the quotient.
  Choosing an orthonormal basis~$(V_j)_{1\leq j\leq 2d}$ of ${\mathfrak v}$ and identifying $\mathfrak g$ with the Lie algebra of left-invariant vector fields on $G$,
  one defines the sub-Laplacian
    $$
  {\Delta}_M= \sum_{j=1}^{2d} V_j^2,
  $$
on $M$, where ${\rm dim}\, {\mathfrak v}=2d$. Note that this makes sense since the $V_j$ are left-invariant, and thus pass to the quotient.

\medskip

We consider the
 hypoelliptic second order equation (see~\cite{Hor67})
  \begin{equation}  \label{e:Schrod}
i\partial_t \psi+\frac 12 \Delta_M \psi  +\mathbb{V} \psi =0
\end{equation}
on $M$, where $\mathbb{V}$ is an analytic function defined on~$M$ (the latter assumption could be relaxed as soon as a unique continuation principle holds for~$\frac12\Delta_M+\mathbb V$, see Remark~\ref{rem:analyticity} below).

\subsection{Examples of nilmanifolds}\label{s:exampleinheis}
Let us describe now an example of a quotient manifold~$M$ to which our result will apply. It is known (see \cite[Theorem 18.2.1]{BLU}, and also~\cite{BFG2}) that any H-type group is isomorphic to one of the ``prototype H-type groups'', which are defined as follows:
let $P^{(1)},\ldots,P^{(p)}$ be $p$  linearly independent $2d \times 2d$ orthogonal skew-symmetric matrices satisfying the property
$$P^{(r)}P^{(s)}+P^{(s)}P^{(r)}=0,\;\;
\forall r,s \in \left \{1,...,p \right \},\;\;r \neq s.$$
Let us denote by 
$  ({w},s)=({w_1,\cdots,w_{2d}},s_1,\cdots,s_p)$
the points of  $\R^{2d+p}$,  that is endowed with the group law 
\begin{equation*}
\quad \quad \quad ({w},s) \cdot ({w'},s'):=\begin{pmatrix}
w+w' \\s_j+s_j'+\frac12 \langle w, P^{(j)}w' \rangle,\,\,\,j=1,...,p
\end{pmatrix}
\end{equation*}
This defines a Lie group with a  Lie algebra of left invariant vector fields
 spanned by the following vector fields: for~$
j$ running  from~1 to~$2d$ and~$k$ from~$ 1$ to~$p$,
\begin{equation*}
 X_j\! :=\!\partial_{w_j} +  \frac 1 2 \sum^{p}_{k=1}\sum^{2d}_{l=1}w_l \, P_{l,j}^{(k)}\partial_{s_k},  \ \ \ \ \text{and} \ \ \ \partial_{s_k} .
\end{equation*}
For more explicit examples of H-type groups,  see \cite[Section 18.1]{BLU} (e.g., Example 18.1.3). It includes the Heisenberg group $\mathbf{H}^d$ (of dimension $2d+1$), but also groups with a center of dimension $p>1$.

\medskip 

In this representation, the  Heisenberg group~$\mathbf{H}^d$ corresponds to $p=1$ and the  choice of 
$$\displaystyle{P^{(1)}= \begin{pmatrix} 0 & {\bf 1}_{\R^d} \\ - {\bf 1}_{\R^d} & 0\end{pmatrix}}.$$  The group law then is
\begin{equation*}
\quad \quad \quad (x,y,s) \cdot (x',y',s'):=\begin{pmatrix}
x+x'\\ y+y'\\s+s'+\frac12 \sum_{j=1}^d(x_jy_j'-x_j'y_j)
\end{pmatrix}
\end{equation*}
where $x,y,x',y'\in\R^d$ and $s,s'\in\R$. We define the scalar product on $\mathfrak v$ by saying that the $2d$ vector fields
\begin{equation}\label{def:XY}
X_j=\partial_{x_j}-\frac{y_j}{2}\partial_{s}, \ \  \ Y_j=\partial_{y_j}+\frac{x_j}{2}\partial_{s}, \qquad j=1,\ldots,d
\end{equation}
form an orthonormal basis, and we define the scalar product on $\mathfrak z$ by saying that $\partial_s$ has norm $1$ (and $\mathfrak v$ and $\mathfrak z$ are orthogonal for the scalar product on $\mathfrak g$). Then we obtain
\begin{equation*}
J_\lambda\left(\sum_{j=1}^d (a_jX_j+b_jY_j)\right)=\lambda\sum_{j=1}^d (-b_jX_j+a_jY_j).
\end{equation*}
where $J_\lambda$ has been introduced in \eqref{e:Jlambda}.

\begin{example}\label{ex:general1}
An example of discrete cocompact subgroup of the Heisenberg group $\mathbf{H}^d$ is
\begin{equation} \label{e:cocompact}
\widetilde{\Gamma}_0=(\sqrt{2\pi}\mathbb{Z})^{2d}\times \pi\mathbb{Z},
\end{equation}
and the associated quotient manifold is the left quotient $M_0=\widetilde{\Gamma}_0\backslash \mathbf{H}^d$. The manifold $M_0$ is  a circle bundle over the $2d$-torus $\T^{2d}$,   its fundamental group is $\widetilde{\Gamma}_0$ which is non-commutative, implying that~$M_0$ is not homeomorphic to a torus.
For more general examples of discrete cocompact subgroups in H-type groups, see \cite[Chapter 5]{CG}.
\end{example}

\subsection{Controllability and observability, geometric conditions}
  
  One says that the Schr\"odinger equation \eqref{e:Schrod} is {\it controllable} in time $T$ on the measurable set $U\subset M$ if for any
 $u_0,u_1\in L^2(M)$, there exists $f\in L^2((0,T)\times M)$ such that the solution $\psi\in L^2((0,T)\times M)$ of $$i\partial_t \psi+\frac 12 \Delta_M \psi+\mathbb{V}\psi=f\bold{1}_U$$ (where $\bold{1}_U$ denotes the characteristic function of $U$) with initial condition $\psi(0,x)=u_0(x)$ satisfies $\psi(T,x)=u_1(x).$
By the Hilbert Uniqueness Method (see~\cite{JLL}), it is well-known that controllability is equivalent to an observability inequality.

\medskip

The Schr\"odinger equation \eqref{e:Schrod} is said to be {\it observable} in time $T$ on the measurable set $U$ if there exists a constant $C_{T,U}>0$ such that
\begin{equation}\label{obs}
\forall u_0\in L^2(M),\;\;\|u_0\|^2_{L^2(M)} \leq C_{T,U} \int_0^T \left\|  {\rm e}^{it(\frac12 \Delta_M+\mathbb{V})} u_0\right\|^2_{L^2(U)} dt.
\end{equation}

\medskip

For the usual (Riemannian) Schr\"odinger equation, it is known that if the so-called {\it  Geometric Control Condition} is satisfied in some time $T'$ (which means that any ray of geometric optics  enters~$U$ within time~$T'$), then observability, and thus controllability, hold in any time $T>0$ (see \cite{Leb}). Much less is known about the converse implication, due to curvature effects.

\medskip

Our main result
gives a similar condition, replacing the rays of geometric optics by the curves of the flow map on~$M\times \mathfrak z^*$:
$$\Phi^s_0: (x,\lambda)\mapsto ({\rm Exp} (sd\mathcal Z^{(\lambda)}/2)x,\lambda),$$
where, for $\lambda= \sum_{1\leq j\leq d } \lambda_j  Z_j^*\in\mathfrak z^*$ (where $(Z_j^*)_{1\leq j\leq p}$ is the dual basis  of the basis  $(Z_1,\cdots ,Z_p)$ of $\mathfrak z$),  $\mathcal Z^{(\lambda)}$ is the element of~$\mathfrak z$ defined by 
$\mathcal Z^{(\lambda)} = \sum_{1\leq j\leq p} \frac{ \lambda_j}{|\lambda| }Z_j$. Equivalently, $\mathcal Z^{(\lambda)}= \lambda/|\lambda|$ after identification of $\mathfrak z$ and $\mathfrak z^*$. 
%
 Note that the integral curves of this flow are transverse to the space spanned by the $V_j$'s.
We introduce the following H-type geometric control condition.

$ $

{\bf (H-GCC)} The measurable set $U$ satisfies  \textbf{H-type GCC}  in time $T$ if
$$\forall  (x,\lambda)\in M\times( \mathfrak z^*\setminus\{0\}),\;\; \exists s\in (0,T),\;\;\Phi^{s}_0 ((x,\lambda))\in U\times \mathfrak z^*.$$

\begin{definition}
We denote by $T_{\rm GCC}(U)$ the infimum of all $T>0$ such that H-type GCC holds in time~$T$ (and we set $T_{\rm GCC}(U)=+\infty$ if H-type GCC does not hold in any time).
\end{definition}

\medskip

In the sequel, we will also consider an additional  assumption (A). 
To give a rigorous statement, we write the coordinates $v=(v_1,\ldots,v_{2d})$ of a vector in the orthonormal basis $V=(V_1,\ldots, V_{2d})$ of~$\mathfrak v$:
$$
V=v_1V_1 +\ldots + v_{2d} V_{2d}\in \mathfrak v.
$$ 
Given $\omega\in\mathfrak v^*$, we write $\omega_j$ for the coordinates of $\omega$ in the dual basis of $V$, and we write $|\omega|=1$ when $\sum_{j=1}^{2d}\omega_j^2=1$.
$ $
\begin{itemize}
\item[{\bf (A)}]   For any $(x,\omega)\in M\times \mathfrak v^*$ such that $|\omega|=1$, there exists $s\in\R$ such that 
$${\rm Exp} \bigl(s\sum_{j=1}^{2d}\omega_jV_j \bigr) x \in U.$$
\end{itemize}
$ $
 Note that this condition is independent of the choice of the basis $V$.
 
\begin{example}\label{ex:general2}
Let us compute the flows involved in the above conditions in the context of Example~\ref{ex:general1}. Denoting by $(x,y,t)$ the  elements of $M_0$,
$$\Phi^s_0 (x,y,t,\lambda)=\left(x,y, t+s\frac d2 \,{\rm sgn} (\lambda), \lambda\right),\;\;s\in\R$$
and choosing the basis $V=(X_1,\cdots,X_d,Y_1,\cdots , Y_d) $ of $\mathfrak v$ (see~\eqref{def:XY}), 
$${\rm Exp} \bigl(s\sum_{j=1}^{d}(a_jX_j +b_jY_j)\bigr) (x,y,t)= \left(x+ sa,y+sb,t+\frac s2(x\cdot b-y\cdot a)  \right),\;\;s\in\R.$$
These trajectories are the lifts in $\mathbf{H}^d$ of the geodesics of $\mathbf{T}^{2d}$.
A typical open set $U\subset \widetilde{\Gamma}_0\backslash \mathbf{H}^d$ of control which one may consider is the periodization of the complementary of a closed ball in a fundamental domain:
$$A=M\setminus (\widetilde{\Gamma}_0 \cdot B)$$
where $B\subset  [0,\sqrt{2\pi})^{2d}\times [0,\pi)$ is a closed ball (for the Euclidean norm for example) whose radius is strictly less than $\pi$. Note that in the definition of $A$, the symbol $\setminus$ stands for the difference of two sets, and not for the quotient.
One can also 
 verify that both Assumption (A) and (H-GCC) (in sufficiently large time, which depends on $I$) are satisfied. 
\end{example}

\subsection{Main result}  \label{s:mainresfkl}

With these geometric definitions, we are able to state conditions for observability and thus controllability of the subelliptic Schr\"odinger equation with analytic potential on H-type nilmanifolds. 

\begin{theorem}\label{t:main} Assume that the potential $\mathbb V$ in \eqref{e:Schrod} is analytic. Let $U\subset M$ be open and denote by $\overline{U}$ its closure.
\begin{enumerate}
\item Assume that $U$ satisfies {\bf (A)} and that $T> T_{\rm GCC}(U)$, then the observability inequality \eqref{obs} holds, i.e. the Schr\"odinger equation~\eqref{e:Schrod} is observable  in time $T$ on~$U$  and thus~\eqref{e:Schrod} is controllable in time $T$ on $U$. 
\item Assume $T\leq T_{\rm GCC}(\overline{U})$, then the observability inequality \eqref{obs} fails, and thus the controllability in time $T$ also fails on $U$.
\end{enumerate}
\end{theorem}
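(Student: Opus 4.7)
The plan is to handle the two directions by complementary microlocal techniques: operator-valued semiclassical measures for the sufficient direction and explicit wave packet quasimodes for the necessary direction, both ingredients being announced in the abstract.

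For part (1), I would argue by contradiction. If observability fails, there exists a sequence $(u_n)_{n\geq 1}\subset L^2(M)$ with $\|u_n\|_{L^2(M)}=1$ and $\int_0^T\|e^{it(\frac12\Delta_M+\mathbb V)}u_n\|^2_{L^2(U)}\,dt\to 0$. To $(u_n)$ one associates, up to extraction, a time-averaged operator-valued semiclassical measure $\mu$ living over $M\times\widehat G$, where by Kirillov theory the unitary dual $\widehat G$ splits into a generic stratum indexed by $\lambda\in\mathfrak z^*\setminus\{0\}$ (on which $\mu$ is operator-valued on $L^2(\mathbb R^d)$) and a character stratum indexed by $\omega\in\mathfrak v^*$ (on which $\mu$ is a scalar measure). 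The total mass of $\mu$ is $1$ and the defect hypothesis forces $\mu$ to put no mass over $U$.

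The heart of the argument is then to combine two invariance laws and kill each stratum of $\mu$ in turn. On the generic stratum, an Egorov-type theorem identifies the Schr\"odinger propagation of symbols with transport along $\Phi_0^s$ up to a fibrewise unitary conjugation, so that $\mu$ is invariant under $\Phi_0^s$ for $s\in[0,T]$; since $T>T_{\rm GCC}(U)$, every orbit hits $U$ in time less than $T$ and invariance forces the generic part of $\mu$ to vanish. On the character stratum the Schr\"odinger flow reduces to a Euclidean translation along $\omega$ inside $M$, so combining translation invariance with assumption \textbf{(A)} annihilates this part of $\mu$. There remains the contribution of the trivial representation, i.e.\ of eigencomponents of $\frac12\Delta_M+\mathbb V$; here analyticity of $\mathbb V$ yields a unique continuation property on $U$ which kills the last piece. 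The three vanishings together contradict $\|\mu\|=1$.

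For part (2), by definition of $T_{\rm GCC}(\overline U)$ there exists $(x_0,\lambda_0)\in M\times(\mathfrak z^*\setminus\{0\})$ whose orbit under $\Phi_0^\cdot$ stays in $(M\setminus\overline U)\times\mathfrak z^*$ for every $s\in(0,T)$. Using the semiclassical wave packets introduced later in the paper, concentrated at $(x_0,\lambda_0)$ at scale $h\to 0$, one shows that the Schr\"odinger evolution transports these packets along $\Phi_0^s$ up to an $o_{L^2}(1)$ error on $[0,T]$. Spatial localisation then gives $\int_0^T\|e^{it(\frac12\Delta_M+\mathbb V)}\psi_h\|^2_{L^2(U)}\,dt=o(1)$ while $\|\psi_h\|_{L^2(M)}$ remains bounded below, in direct contradiction with \eqref{obs}. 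The main obstacle I expect is the Egorov theorem on the generic stratum: the symbolic calculus has to be adapted to the non-commutative group Fourier transform of a general H-type group rather than only the Heisenberg group, and the fibres are infinite-dimensional so the principal symbol is operator-valued. A second delicate point is the low-frequency piece, invisible to purely semiclassical tools, which is precisely where analyticity of $\mathbb V$ enters through unique continuation.
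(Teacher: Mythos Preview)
Your outline for Part (2) is essentially the paper's approach: build wave packets concentrated at $(x_0,\lambda_0)$, show their semiclassical measure propagates along $\Phi_0^s$, and conclude that the observation integral vanishes. One small fix: when $T=T_{\rm GCC}(\overline U)$ you cannot directly produce an orbit avoiding $\overline U$ on $[0,T]$; the paper first observes that if \eqref{obs} held at time $T$ it would hold at time $T-\delta$ for some $\delta>0$, so one may assume $T<T_{\rm GCC}(\overline U)$ before choosing $(x_0,\lambda_0)$.

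Part (1) has a genuine structural gap. You take an arbitrary contradicting sequence $(u_n)$ with $\|u_n\|=1$ and assert that the associated semiclassical measure has total mass $1$. But you have introduced no semiclassical scale: there is no $h$ along which $(u_n)$ is $h$-oscillating, and without this nothing prevents the mass from escaping to low frequency (e.g.\ $u_n$ could converge weakly to a nonzero limit, in which case the semiclassical measure at any scale $h_n\to 0$ is zero). Your attempt to recover this lost mass as ``the contribution of the trivial representation'' conflates two unrelated things: the point $1_{\widehat G}\in\widehat G$ in phase space, and the low-frequency defect of the sequence, which is a compactness issue invisible to the measure. Unique continuation does not act on a stratum of $\mu$; it acts on actual $L^2$ functions.

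The paper resolves this by a three-step scheme that you have collapsed into one. First, prove a \emph{localized} observability inequality \eqref{obs_loc} for data of the form $\mathcal P_h u_0=\chi(-h^2(\tfrac12\Delta_M+\mathbb V))u_0$: here the frequency cutoff forces $h$-oscillation, so the semiclassical measure does capture the full mass, and your invariance argument (with one refinement below) goes through. Second, sum over a Littlewood--Paley decomposition in $h=2^{-j/2}$ to obtain a \emph{weak} observability inequality with an extra $\|(\Id-\Delta_M)^{-1}u_0\|^2$ on the right. Third, remove this compact remainder by the Bardos--Lebeau--Rauch argument, and it is only here that analytic unique continuation for $\tfrac12\Delta_M+\mathbb V$ is invoked, on genuine eigenfunctions rather than on a piece of $\mu$.

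A secondary point: on the generic stratum the measure is not simply invariant under $\Phi_0^s$. Because $H(\lambda)$ is a harmonic oscillator, $\Gamma_t$ commutes with $H(\lambda)$ and splits as $\sum_n \Pi_n^{(\lambda)}\Gamma_t\Pi_n^{(\lambda)}$, and each block satisfies a transport equation along $\Phi_n^s$, with speed $(2n/d+1)$ times that of $\Phi_0$. The H-type GCC for $\Phi_0$ in time $T'<T$ then gives control of every $\Phi_n$-orbit in time $T'd/(2n+d)\leq T'$, which is what actually closes the argument.
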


Although this will be commented more thoroughly in Remark \ref{r:assumpA}, let us already say that the authors conjecture that  the observability inequality \eqref{obs} holds in $U$ at time $T$ under the only condition that $T>T_{\rm GCC}(U)$ (and thus one could avoid using Assumption (A)). We also point out Remark~\ref{rem:analyticity} about the  assumption that  the potential is analytic. Finally, we notice that in general $T_{\rm GCC}(U)\neq T_{\rm GCC}(\overline{U})$. This is due to the possible existence of ``grazing rays'', see Remark \ref{r:grazing} for more comments on this issue.

 \medskip

The existence of a minimal time of control in Theorem \ref{t:main} contrasts strongly with the observability in arbitrary small time, under Geometric Control Condition, of the usual elliptic Schr\"odinger equation (see \cite{Leb}), which is related to its ``infinite speed of propagation''. In the subelliptic setting which we consider here (meaning that $\Delta_M$ is subelliptic but not elliptic), in the directions defined by $\mathfrak z$, the Schr\"odinger operator has a very different behaviour, possessing for example a family of travelling waves moving at speeds proportional to $n\in\N$, as was first noticed in \cite[Section 1]{BGX} (see also \cite[Theorem 2.10]{FF}).  The existence of a minimal time of observability for hypoelliptic PDEs was first shown in the context of the heat equation: for instance the case of the heat equation with Heisenberg sub-Laplacian has been investigated in \cite{BC} and the case of the heat equation with ``Grushin'' sub-Laplacian has been studied in \cite{Koe}, \cite{DK} and \cite{BDE}.

\medskip

More recently, in \cite{BS}, it was shown that the Grushin Schr\"odinger equation $i\partial_{t}u-\partial_{x}^2u-x^2\partial_{y}^2u=0$ in $(-1,1)_x\times \mathbb{T}_y$ is observable on a set of horizontal strips if and only the time $T$ of observation is sufficiently large. With related ideas, it is shown in \cite{LS} that the observability of the Grushin-type Schr\"odinger equation $i\partial_{t}u+(-\partial_{x}^2-|x|^{2\gamma}\partial_{y}^2)^su=0$ in $(-1,1)_x\times \mathbb{T}_y$ (with observation on the same horizontal strips as in \cite{BS}) depends on the value of the ratio $(\gamma+1)/s$: observability may hold in arbitrarily small time, or only for sufficiently large times, or even never hold if $(\gamma+1)/s$ is large enough. These results share many similarities with ours, although their proofs use totally different techniques. Finally, in contrast with the usual ``finite time of observability'' of elliptic waves (under GCC), it was shown in \cite{Let} that subelliptic wave equations are never observable.
We can roughly summarize all these results by saying that \emph{the subellipticity of the sub-Laplacian slows down the propagation of evolution equations in the directions needing brackets to be generated.}

\medskip

The proof of Theorem \ref{t:main} is based on  adapting standard semi-classical approach to prove observability for a class of Schr\"odinger equations with \emph{subelliptic} Laplacian, through the use of the operator-valued semi-classical measures of~\cite{FF} which are adapted to this stratified setting. The proof also uses the introduction of wave packets playing  in this non-commutative setting a role similar to the ones introduced in~\cite{CR} and~\cite{hag} in the Euclidean case. To say it differently, we follow the usual scheme for proving or disproving observability inequalities, but with all the analytic tools (i.e., pseudodifferential operators, semiclassical measures and wave packets) adapted to our subelliptic setting: we do not use, for instance, classical pseudodifferential operators.

\subsection{Strategy of the proof} 
The theorem consists in two parts: firstly that the condition {\bf (A)} guarantees that the observability holds when $T>T_{\rm GCC}(U)$ and, secondly, that the observability fails when $T\leq T_{\rm GCC}(\overline{U})$. Beginning with the first part, 
 it is standard  (see \cite{Leb}) to start with  a {\it localized observability} result as stated in the next lemma. 

 \begin{lemma}[Localized observability]\label{lem:locobs}
 Assume the set $U$ satisfies assumption {\bf (A)} and that~{\bf (H-GCC)} holds in time $T$ for $U$. Let $h>0$ and $\chi\in C_c^\infty((1/2,2),[0,1])$. Using functional calculus, we set
 \begin{equation}\label{def:Pih}
 \mathcal{P}_h f= \chi\left(-h^2\left(\frac12 \Delta_M+\mathbb{V}\right)\right) f,\;\; f\in L^2(M).
 \end{equation}
Then, there exists a constant $C_0>0$ such that for any sufficiently small $h>0$ and any $u_0\in L^2(M)$,
\begin{equation}\label{obs_loc}
\| \mathcal{P}_h u_0\|^2_{L^2(M)} \leq C_0 \int_0^T \left\|   {\rm e}^{it(\frac12 \Delta_M+\mathbb{V})}  \mathcal{P}_h u_0\right\|^2_{L^2(U)} dt.
\end{equation}
 \end{lemma}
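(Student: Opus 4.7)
\emph{Proof plan.} The natural approach is a standard contradiction argument via semi-classical measures, specialized here to the H-type setting. Suppose that \eqref{obs_loc} fails. Then there exist sequences $h_n\to 0^+$ and $u_n\in L^2(M)$ with $\|\mathcal{P}_{h_n}u_n\|_{L^2(M)}=1$ while
\[
\int_0^T \left\| {\rm e}^{it(\frac12\Delta_M+\mathbb V)}\mathcal{P}_{h_n}u_n\right\|^2_{L^2(U)}\,dt \ \longrightarrow \ 0.
\]
Set $\psi_n(t)={\rm e}^{it(\frac12\Delta_M+\mathbb V)}\mathcal{P}_{h_n}u_n$. Because of the cut-off $\chi$, the family $(\psi_n(t))_n$ is $h_n$-oscillating in the sub-Riemannian sense; extracting a subsequence, I would associate to it the operator-valued semi-classical measure $\Gamma=(\Gamma_t)_{t\in(0,T)}$ of~\cite{FF}, built from the representation theory of $G$. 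Conservation of $\|\psi_n(t)\|_{L^2(M)}=1$ together with the spectral localization gives that $\Gamma_t$ has total mass $1$ for almost every $t\in(0,T)$, while the hypothesis on the observation integral gives $\int_0^T \Gamma_t(U\times \widehat{G})\,dt = 0$.

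The unitary dual of an H-type group splits according to whether $\lambda\in\mathfrak z^*$ vanishes or not, and accordingly $\Gamma_t$ decomposes as $\Gamma_t=\Gamma_t^{\lambda\neq 0}+\Gamma_t^{\lambda=0}$. The first piece lives above $M\times(\mathfrak z^*\setminus\{0\})$ and takes values in trace-class operators on the representation space of $\pi_\lambda$; the $\chi$-cut-off pins its support to a bounded set in the joint spectrum of the associated quantum harmonic oscillators. The second piece is a scalar positive Radon measure on $M$ times the unit sphere of $\mathfrak v^*$, corresponding to the horizontal, classically elliptic directions. An Egorov-type argument for $\frac12\Delta_M+\mathbb V$ performed at the operator-valued level (in which the analytic potential $\mathbb V$ only contributes subprincipal terms) then yields two propagation laws: the scalar projection of $\Gamma_t^{\lambda\neq 0}$ onto $M\times(\mathfrak z^*\setminus\{0\})$ is invariant under the flow $\Phi^s_0$ of the introduction, and $\Gamma_t^{\lambda=0}$ is invariant under the horizontal geodesic flow $(x,\omega)\mapsto ({\rm Exp}(s\sum_j\omega_jV_j)\cdot x,\omega)$.

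It then remains to exploit the two geometric hypotheses. For the vertical piece, H-GCC in time $T$ says that every point of its support can be sent into $U\times(\mathfrak z^*\setminus\{0\})$ by some $\Phi^s_0$ with $s\in(0,T)$; openness of $U$, flow invariance, and lower semi-continuity of mass on open sets force $\int_0^T \Gamma_t^{\lambda\neq 0}(U\times\widehat{G})\,dt>0$ unless $\Gamma^{\lambda\neq 0}\equiv 0$. For the horizontal piece, assumption \textbf{(A)} guarantees that every horizontal trajectory meets $U$ at some time $s\in\R$; combined with flow invariance and time-averaging over $(0,T)$ (here the finite speed of the classical horizontal flow is harmless because we need only one visit), this gives the analogous strict positivity unless $\Gamma^{\lambda=0}\equiv 0$. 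Summing, $\int_0^T\Gamma_t(U\times\widehat{G})\,dt>0$, which contradicts the vanishing established above.

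The principal obstacle is everything packed into the first two steps: constructing the operator-valued measure $\Gamma$ in the H-type context, rigorously justifying its decomposition according to $\lambda$, proving the two distinct propagation laws via an Egorov theorem for the sub-Laplacian with a lower-order term, and ensuring that no mass is lost under $\mathcal{P}_h$ as $h\to 0$. The analyticity of $\mathbb V$ is used only to invoke unique continuation for $\frac12\Delta_M+\mathbb V$, which rules out degenerate pieces of $\Gamma$ supported away from $U$; everything else is carried out inside the operator-valued pseudodifferential calculus attached to the group Fourier transform on $G$, which is precisely the machinery developed (with the wave packets mentioned in the introduction) to make this scheme work on H-type nilmanifolds.
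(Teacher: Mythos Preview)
Your overall scheme --- contradiction, extraction of a time-averaged operator-valued semiclassical measure $\Gamma_t\,d\gamma_t$, splitting according to $\lambda\in\mathfrak z^*\setminus\{0\}$ versus $\lambda=0$, and using Assumption~\textbf{(A)} to kill the $\lambda=0$ piece --- matches the paper. But your treatment of the $\lambda\neq 0$ piece has a genuine gap.

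You assert that the scalar projection of $\Gamma_t^{\lambda\neq 0}$ is \emph{invariant under the flow $\Phi_0^s$}. This is false on two counts. First, the measure is not spatially flow-invariant at fixed $t$ (that would be a wave-equation phenomenon); it satisfies a \emph{transport equation in $t$}. Second, and more seriously, there is no single flow: because $H(\lambda)$ has discrete spectrum $\{|\lambda|(2n+d):n\in\N\}$, the measure $\Gamma_t^{\lambda\neq 0}$ decomposes as $\sum_n \Gamma_{n,t}$ with $\Gamma_{n,t}=\Pi_n^{(\lambda)}\Gamma_t\Pi_n^{(\lambda)}$, and each piece obeys
\[
\bigl(\partial_t-(n+\tfrac d2)\mathcal Z^{(\lambda)}\bigr)\bigl(\Gamma_{n,t}\,d\gamma_t\bigr)=0,
\]
i.e.\ transport at a speed depending on the Hermite level $n$. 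The total scalar trace does \emph{not} satisfy any single transport law, so your one-line ``flow invariance $+$ H-GCC'' argument cannot run as written.

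The paper's actual argument supplies what is missing: having shown $\gamma_t(U\times\widehat G)=0$ a.e., one fixes $T'$ with $T_{\rm GCC}(U)<T'<T$, assumes some $\gamma_{n_0,t}$ charges a small set $\Lambda$ over a short time interval $[0,\tau]\subset[0,T-T']$, uses H-GCC to find $s\in(0,T')$ with $\Phi_0^s(\Lambda)\subset U$, rewrites this as $\Phi_{n_0}^{s(n_0)}(\Lambda)\subset U$ with $s(n_0)=\frac{sd}{2n_0+d}<T'$, and then transports $\gamma_{n_0,t}$ forward by time $s(n_0)$ to land in $U$ during the interval $[s(n_0),\tau+s(n_0)]\subset[0,T]$, contradicting the vanishing over $U$. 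The point is that each level moves \emph{faster} than $\Phi_0$, so H-GCC in time $T'$ suffices, but you must work level by level and track time intervals carefully.

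A separate inaccuracy: the analyticity of $\mathbb V$ and unique continuation play \emph{no role} in this lemma. The contradiction comes directly from the $h$-oscillation (which forces the measure to have full mass $1$) versus the vanishing you derive. Unique continuation is used only later, in passing from weak to strong observability.
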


 \begin{remark} \label{r:masscons}
By conservation of mass in the LHS (and invariance of H-type GCC by translation in time), this inequality also holds when the integral in the RHS is taken over an arbitrary time interval $(T_1,T_2)$ such that $T_2-T_1\geq T$.
\end{remark}

The proof of the localized observablity  is done in Section~\ref{sec:proof} below.
The argument is by contradiction (as in~\cite{BZ} or~\cite[Section 7]{AM14}) and it
 uses the semi-classical setting based on representation theory and developed in~\cite{FF1,FF} that we extend to the setting of quotient manifolds in Section~\ref{sec:semiclas}.
  In particular, this argument relies in a strong way on the operator-valued semi-classical measures constructed in Sections \ref{s:sclmeas} and \ref{s:sclmeastime}.

\medskip 

The role of semiclassical measures in the context of observability estimates was first noticed by Gilles Lebeau~\cite{lebeau}
and has been
widely used since then~\cite{MaciaTorus,AM14,AFM15,MacRiv18}, with all the developments of semi-classical measures, especially two-scale (also called two-microlocal) semi-classical measures that allow to analyze more precisely the concentration of families on submanifolds. These two-scale measures introduced in the  end of the 90-s (see~\cite{Fermanian_note1,Fermanian_Note2,FG02, NierScat, MillerThesis}) have known since then a noticeable development in control theory (see the survey~\cite{MaciaLille}) and in a large range of problems  from conical intersections in quantum chemistry \cite{LT05, FL08} to effective mass equations~\cite{CFM19,CFM20}. The semi-classical measures that we consider here  have common features with the two-scales ones in the sense that they are operator-valued. This operator-valued feature  arises from the inhomogeneity of the nilmanifolds, in parallel with the homogeneity introduced by a second scale of concentration as in the references above. However, the operator-valued feature is more fundamental here since it is due to non-commutativity of nilmanifolds and is a  direct consequence of the original features of Fourier analysis on nilpotent groups: it is thus intrinsic to the structure of the problem.

  \medskip

The second step of the proof of the first part of Theorem \ref{t:main} consists in passing from the localized observability to observability itself.
Standard arguments (see ~\cite{BZ}) that we describe in Section~\ref{s:weakobs}  allow to derive from Lemma~\ref{lem:locobs},  a {\it weak observability} inequality in time $T$ on the domain $U$:   there exists a constant $C_1>0$ such that
\begin{equation}\label{obs_weak}
\forall u_0\in L^2(M),\;\;\|u_0\|^2_{L^2(M)} \leq C_1 \int_0^T \left\|   {\rm e}^{it(\frac12 \Delta_M+\mathbb{V})} u_0\right\|^2_{L^2(U)} dt
+ C_1 \| (\Id-\Delta_M)^{-1} u_0\|^2_{L^2(M)}.
\end{equation}
Note that compared to \eqref{obs}, the latter inequality has an added term in its RHS which controls the low frequencies. This weak observability inequality \eqref{obs_weak} implies~\eqref{obs} via a Unique continuation principle for $\frac12\Delta_M+\mathbb{V}$ (see~\cite{Bon69} and~\cite{Laur}), as we describe in Section~\ref{s:fromweaktostrongobs}. It is then not surprising that the result of Theorem~\ref{t:main} holds as soon as a Unique continuation principle is known for $\frac12\Delta_M+\mathbb{V}$, without further assumption of analyticity on~$\mathbb V$ (see Remark~\ref{rem:analyticity}).

\medskip

For proving the second part of Theorem~\ref{t:main} -- the necessity of the condition~{\bf (H-GCC)} --  we construct a family of initial data $(u^\eps_0)$ for which the solution~$(\psi^\eps(t))$ of the Schr\"odinger equation~\eqref{e:Schrod} concentrates on the curve $\Phi^t_0(x_0,\lambda_0)$, for any choice of $(x_0,\lambda_0)\in M\times \mathfrak z\setminus \{0\}$. As mentioned above, this set of initial data is  the non-commutative counterpart to the wave packets (also called coherent states)  in the Euclidean setting~\cite{CR,hag}. These aspects are the subject of Section~\ref{sec:WP}. Our proof relies on a statement of propagation of semiclassical measures which was proved in~\cite{FF} when $\mathbb V=0$ and that we adapt to our setting. A second proof consists in using the results of Appendix \ref{a:wpsolutions}, which are of independent interest: we prove that, if the initial datum is a wave packet, the solution of~\eqref{e:Schrod} is also (approximated by) a wave packet.

\medskip

Our approach 
could be developed in general graded Lie groups through the  generalization of the tools we use: for semi-classical measures in graded groups, see Remarks~3.3 and~4.4 in~\cite{FF1}, and for an extension  of non-commutative wave packets to a more general setting, see Sections~6.3 and~6.4 in~\cite{FF18} (based on~\cite{pedersen}).

\medskip

\noindent{\bf Acknowledgements}. We thank V\'eronique Fischer, Matthieu L\'eautaud, Fabricio Maci\`a and Chenmin Sun for interesting discussions. The authors are also grateful to the referees for their remarks and suggestions.  C.L. was partially supported by the grant ANR-15-CE40-0018 of the ANR (project SRGI).


\section{Semi-classical analysis on quotient manifolds}\label{sec:semiclas}

Semi-classical analysis is based on the analysis of the scales of oscillations of functions. It uses a microlocal approach, meaning that one understands functions in  the phase space, i.e. the space of  position/impulsion of quantum mechanics.
As the impulsion variable is the dual variable of the position variable via the Fourier transform, microlocal analysis crucially relies on the Fourier representation of functions, and on the underlying harmonic analysis.

\medskip

Recall that, in the usual Euclidean setting,  the algebra of pseudodifferential  operators contains those of multiplications by  functions together with Fourier multipliers. These operators are defined by their symbols via the Fourier inversion formula and are  used for analyzing families of functions in the phase space. Indeed, their boundedness in $L^2$ for adequate classes of symbols allows to build a  linear map on  the set of symbols, the weak limits of which are characterized by non-negative Radon measures. These measures give phase space  information on the obstruction to strong convergence of bounded families in $L^2(\R^d)$. In a context where no specific scale is specified, they are called microlocal defect measures, or~$H$-measures and were first introduced independently in~\cite{gerard_91,tartar}. When a specific scale of oscillations is prescribed, this scale is called the semi-classical parameter and they are called semi-classical (or Wigner) measures (see~\cite{HMR,gerard_X,gerardleichtnam,LP93,GMMP}).
 If these functions are moreover solutions of some equation, the semi-classical measures may have additional properties such as invariance by a flow. 

\medskip  

In the next sections, we follow the same steps, adapted to the context of quotients of H-type groups, which are non-commutative: following the theory of non-commutative harmonic analysis (see \cite{CG,Tay} and some elements given in Appendix~\ref{a:rep}), we define the (operator-valued) Fourier transform \eqref{e:fouriertransform}, based on the unitary irreducible representations of the group, recalled in \eqref{eq_widehatG}, which form an analog to the usual frequency space. Then, adapting the ideas of~\cite{FF1} to the context of nilmanifolds, we use the Fourier inversion formula \eqref{inversionformula} to define in~\eqref{e:defsigma} a class of symbols and the associated semi-classical pseudodifferential operators in~\eqref{def:pseudo}. From this, Proposition \ref{prop:semiclas} guarantees the existence of semi-classical measures, whose additional invariance properties for solutions of the Schr\"odinger equation are listed in Proposition~\ref{p:measure0}.

 \subsection{Harmonic analysis on quotient manifolds}

Let $G$ be a stratified nilpotent Lie group of $H$-type and $\widetilde{\Gamma}$ be a discrete cocompact subgroup of $G$. We consider the left quotient $M=\widetilde{\Gamma}\backslash G$ and we denote by
$\pi$ the canonical projection
$$\pi:G\rightarrow M$$
which associates to $x\in G$ its class modulo $\widetilde{\Gamma}$.

For each $\lambda \in \mathfrak z^*\setminus\{0\}$, one associates with $\lambda$ the canonical skew-symmetric form $B(\lambda)$ defined on~$\mathfrak v$ by
$$B(\lambda)(U,V)= \lambda([U,V]).$$

The map $J_\lambda:\mathfrak v\rightarrow \mathfrak v$ of Section \ref{s:intro} is the natural endomorphism associated with $B(\lambda)$ and the scalar product $\langle \cdot,\cdot\rangle$.
In $H$-type groups,
the symmetric form $-J_\lambda^2$ is the scalar map $|\lambda|^2 {\rm Id}$ (note that $-J_\lambda^2$ is  always a non-negative symmetric form).
Therefore,  one can find a $\lambda$-dependent orthonormal basis
$$
\left (P_1^{(\lambda)} , \dots ,P_d^{(\lambda)},  Q_1^{(\lambda)} , \dots ,Q_d^{(\lambda)}\right)
$$
of $\mathfrak v$  where $J_\lambda$ is represented by 
$$  \langle J_\lambda(U),V\rangle=B(\lambda)(U,V)= |\lambda| U^t JV\;\;\mbox{ with }\;\;
J=\begin{pmatrix}0 & {\rm Id} \\ -{\rm Id} & 0\end{pmatrix},$$ 
the vectors $U,V\in \mathfrak v$ being written in the $\left (P_1^{(\lambda)} , \dots ,P_d^{(\lambda)},  Q_1^{(\lambda)} , \dots ,Q_d^{(\lambda)}\right)$-basis.
We then  decompose~$ \mathfrak v$ in a $\lambda$-depending way as
$ \mathfrak v = \mathfrak p_\lambda+  \mathfrak q_\lambda$
 with
 $$
 \begin{aligned}
  \mathfrak p:=\mathfrak p_\lambda:= \mbox{Span} \, \big (P_1^{(\lambda)}, \dots ,P_d^{(\lambda)} \big) \, , & \quad \mathfrak q:=\mathfrak q_\lambda:= \mbox{Span} \, \big (Q_1^{(\lambda)}, \dots ,Q_d^{(\lambda)}\big).
   \end{aligned}
$$
Denoting by $z=(z_1,\cdots ,z_p)$ the coordinates of $Z$ in a fixed orthonormal basis $(Z_1,\cdots, Z_p)$ of $\mathfrak z$, and once given $\lambda\in\mathfrak z^*\setminus\{0\}$,
we will often use the writing of an element $x\in G$ or $X\in \mathfrak g$ as
\begin{equation}
\label{eqxpqz}
x={\rm Exp}(X) , \qquad X=p_1P_1^{(\lambda)} +\ldots + p_dP_d^{(\lambda)} \ + \ q_1Q_1^{(\lambda)} + \ldots + q_d Q_d ^{(\lambda)}\ + \ z_1 Z_1 +\ldots + z_p Z_p,
\end{equation}
where $X=P+Q+Z$, $p=(p_1,\cdots,p_d)$ are the $\lambda$-dependent coordinates of $P$ on the vector basis $(P_1^{(\lambda)}, \cdots,P_d^{(\lambda)})$,  $q=(q_1,\cdots,q_d)$ those of $Q$ on $(Q_1^{(\lambda)},\cdots,Q_d^{(\lambda)})$, and  $z=(z_1,\cdots , z_p)$ of $Z$ are independent of $\lambda$.

\begin{example}\label{ex:Heisenberg1}
In the Heisenberg group $\mathbf{H}^d$, there is a natural choice of coordinates, those we used in Section~\ref{s:exampleinheis} (see \cite[Chapter 1]{Tay}). However, it does not coincide with  the $(p,q,z)$ coordinates that we could define as above by associating with  $\lambda=\alpha dz$, $\alpha\in\R$, 
the vectors  $P_j^{(\lambda)}=X_j$, $Q_j^{(\lambda)}=Y_j$ for $\alpha>0$, 
and $P_j^{(\lambda)}=X_j$, $Q_j^{(\lambda)}=-Y_j$ for $\alpha<0$. One then finds coordinates 
$(p,q,z)$ that are not the usual coordinates $(x,y,s)$ of the Heisenberg groups: 
\begin{equation}\label{change:var}
(x,y,s)=(p,q,z)\;\; \text{if} \;\; \lambda>0\;\;\text{and} \;\; (x,y,s)= (p,-q,z)\;\; \text{if} \;\; \lambda<0.
\end{equation}
In general H-type groups, there is no canonical choice of coordinates, unlike for Heisenberg groups.   
\end{example}

As already mentioned in Section \ref{s:mainresfkl}, we also fix an orthonormal basis $(V_1,\ldots, V_{2d})$ of $\mathfrak v$ to write the coordinates $
v=(v_1,\ldots,v_{2d})$ of a vector 
$$
V=v_1V_1 +\ldots + v_{2d} V_{2d}\in \mathfrak v;
$$
 both this orthonormal basis and the coordinates are
independent of $\lambda$. 
With these coordinates, we  define  a  quasi-norm by setting
\begin{equation}\label{def:quasinorm}
|x|= \left( |v_1|^4 + \cdots +|v_{2d}|^4 + |z_1|^2 + \cdots + |z_p|^2\right)^{1/4},\;\; x= {\rm Exp} (V+Z)\in G.
\end{equation}
We recall that it satisfies a triangle inequality up to a constant.

\subsubsection{Functional spaces}
We shall say that a function $f$ on $G$ is  $\widetilde{\Gamma}$-leftperiodic if we have
$$\forall x\in G,\;\;\forall \gamma\in \widetilde{\Gamma} ,\;\; f(\gamma x)=f(x).$$
With a function $f$ defined on $M$, we associate the $\widetilde{\Gamma}$-leftperiodic function $f\circ \pi$ defined on $G$. Conversely, a
 $\widetilde{\Gamma}$-leftperiodic function $f$ naturally defines a function  on $M$. Thus the set of functions on $M$ is in one-to-one relation  with the set of $\widetilde{\Gamma}$-leftperiodic functions on $G$.

\medskip

The inner products on $\mathfrak v$ and ${\mathfrak z}$ allow us to consider the Lebesgue measure $dv\, dz$ on ${\mathfrak g}={\mathfrak v}\oplus{\mathfrak z}$. Via the identification of $G$ with ${\mathfrak g}$ by the exponential map, this induces a Haar measure $dx$ on $G$ and on $M$. This measure is invariant under left and right translations:
  $$
  \forall f  \in L^1(M) \, ,  \quad  \forall x  \in M \,, \quad \int_M f(y) dy  = \int_M f(x  y)dy= \int_M f(y  x)dy \, .
   $$
The convolution of two functions $f$ and $g$ on $M$ is given by
   $$f*g(x)=\int_M f(xy^{-1})g(y) dy=\int_Mf(y)  g(y^{-1} x) dy.$$
Using the bijection of the set of functions on $M$ with the set of $\widetilde{\Gamma}$-leftperiodic functions on $G$, we deduce that $f*g$ is well-defined as a function on $M$.
Finally,    we define   Lebesgue spaces by
$$
 \|f\|_{L^q (M)}  := \left( \int_M |f(y)|^q \: dy \right)^\frac1q \, ,
 $$
 for $q\in[1,\infty)$, with the standard modification when~$q=\infty$.

\subsubsection{Homogeneous dimension}
  Since $G$ is stratified,   there is a natural family of dilations on ${\mathfrak g}$ defined for $t>0$ as follows: if~$X$ belongs to~$ {\mathfrak g}$, we  decompose~$X$ as~$\displaystyle X=V+Z$ with~$V\in {\mathfrak v}$ and~$Z\in {\mathfrak z}$ and we set
   $$
   \delta_t X:=tV+t^2Z  \, .
   $$
The dilation is defined on $G$ via the identification by the exponential map  as the map ${\rm Exp}\, \circ \delta_t \, \circ {\rm Exp}^{-1}$ that we still denote by $\delta_t$.
  The dilations $\delta_t$, $t>0$, on $\mathfrak g$ and $G$ form a one-parameter group of automorphisms of the Lie algebra $\mathfrak g$ and of the group~$G$.
The Jacobian of the dilation $\delta_t$ is $t^Q$ where
 $$Q:={\rm dim}\, {\mathfrak v} +2{\rm dim}\, {\mathfrak z} = 2d+2p$$
  is called the homogeneous dimension of $G$.
 A differential operator $T$ on $G$
(and more generally any operator $T$ defined on $C^\infty_c(G)$ and valued in the distributions of $G\sim \R^{2d+p}$)
 is said to be homogeneous of degree $\nu$ (or $\nu$-homogeneous) when
 $
 T (f\circ \delta_t) = t^\nu (Tf)\circ \delta_t.
 $
We recall that the  quasi-norm introduced in~\eqref{def:quasinorm} satisfies $|\delta_r x|= r|x|$ for all $r>0$ and $x\in G$. It is a homogeneous quasi-norm and we recall that any homogeneous quasi-norm is equivalent to it.

\subsubsection{Irreducible representations and Fourier transform}

For the sake of completeness, many details about the results of this section, which are standard in non-commutative harmonic analysis, are given in Appendix \ref{a:rep}.

\medskip

The infinite dimensional irreducible representations of~$G$ are parametrized by $\mathfrak z^* \setminus \{0\}$:
for  $\lambda\in \mathfrak z^*\setminus\{0\}$, one defines $\pi_{\cdot}^\lambda:G\rightarrow L^2( \mathfrak p_\lambda) \sim L^2(\R^d)$ by
\begin{equation}\label{def:pilambda}
\pi^{\lambda}_{x} \Phi(\xi)=
 {\rm e}^{i\lambda(z)+ \frac i2 |\lambda|\,p \cdot q +i\sqrt{|\lambda|} \,\xi \cdot q} \,\Phi \left(\xi+\sqrt{|\lambda|}p\right),
 \end{equation}
 where $x$ has been written as in \eqref{eqxpqz}.
 The representations $\pi^\lambda$, $\lambda\in \mathfrak z^*\setminus\{0\}$, are infinite dimensional.
 The other unitary irreducible representations of $G$
  are given by the characters of the first stratum in the following way:
 for every  $\omega\in \mathfrak v ^*$,
  we set
\begin{equation}\label{eq:0omega}
\pi^{(0,\omega)}_x= {\rm e}^{i \omega(V)}, \quad
x={\rm Exp} (V+Z)\in G, \quad\mbox{with}\ V\in{\mathfrak v} \ \mbox{and} \  Z\in{\mathfrak z}.
\end{equation}
The set~$\widehat G$ of all unitary irreducible representations modulo unitary equivalence
is then parametrized by $({\mathfrak z}^*\setminus \{0\})\sqcup {\mathfrak v}^*$:
\begin{equation}
\label{eq_widehatG}	
\widehat G =
\{\mbox{class of} \ \pi^\lambda \ : \ \lambda \in \mathfrak z^* \setminus\{0\}\}
\sqcup \{\mbox{class of} \ \pi^{(0,\omega)} \ : \ \omega \in \mathfrak v^* \}.
\end{equation}
The subset $\mathfrak v^*$  of $\widehat G$ is often thought as a bundle over $\lambda=0$ (see the discussions about the Heisenberg fan in~\cite[Lemma~2.2]{FF1}). This explains the  $0$ in the notation $(0,\omega)$ that we use here to differentiate $\pi^{(0,\omega)}$ from $\pi^\lambda$. It is natural since we think of $\mathfrak v^*$ as ``horizontal'' and $\mathfrak z^*$ as ``vertical''.

\medskip
 
We will  identify each representation $\pi^\lambda$ with its equivalence class.
Note that the trivial representation $1_{\widehat G}$ corresponds to the class of $\pi^{(0,\omega)}$ with
$\omega=0$, i.e.
$
1_{\widehat G}
:=
\pi^{(0,0)}.
$
The dilation $\delta_\eps$ extends on~$\widehat G$ by $\eps \cdot\pi^ \lambda=\pi^{\eps^2 \lambda} $ for $\lambda\in\mathfrak z^*\setminus \{0\}$ and $\eps \cdot \pi^{(0,\omega) }= \pi^{(0,\eps \omega)} $ for $\omega\in\mathfrak v^*$.

\medskip

The set $G\times \widehat G$ will be interpreted in our analysis as  the phase space of $G$, and $M\times \widehat G$ as the phase space of $M$, in analogy with the fact that $\R^d\times \R^d$ and $\T^d\times \R^d$ are respectively the phase space of the Euclidean space $\R^d$ and of the torus $\T^d$.

\begin{example}
In the case of the Heisenberg group, the formula~\eqref{def:pilambda} differs from 
the usual one for the Heisenberg groups \cite[Equation (2.23) in Chapter 1]{Tay} because the coordinates $(p,q,z)$ are different from the canonical ones~$(x,y,s)$ (see Example~\ref{ex:Heisenberg1}). They are related by the relation~\eqref{change:var}.
\end{example}

\medskip

The Fourier transform  is defined on~$\widehat G$ and is valued  in   the space of  bounded operators
on~$L^2( \mathfrak p_\lambda)$:
for any~$\lambda\in{\mathfrak z}^*$, $\lambda\not=0$,
\begin{equation} \label{e:fouriertransform}
{\mathcal F}f(\lambda):=
\int_G f(x)\left( \pi^{\lambda}_{x }\right)^* \, dx \, ,
\end{equation}
Besides,  above finite dimensional representations, the Fourier transform is defined 
 for $\omega\in \mathfrak v^*$ by
 $$
\widehat f(0,\omega)=
 \mathcal F f (0,\omega)
 := \int_G f(x) (\pi^{(0,\omega)}_x)^* dx
=\int_{\mathfrak v \times \mathfrak z}
f({\rm Exp}(V+Z) ) e^{-i\omega(V)} dV dZ.	
$$
 Functions~$f$ of~$L^1(G)$  have a Fourier transform~$\left({\mathcal F}(f)(\lambda)\right)_{\lambda\in \mathfrak z^*}$ which is a bounded family of bounded operators on~$L^2( \mathfrak p_\lambda)$
 with uniform bound:
$$
\|\mathcal Ff (\lambda) \|_{{\mathcal L}(L^2(\mathfrak p_\lambda))}
\leq
\int_G |f(x)|\|(\pi^\lambda_x)^* \|_{{\mathcal L}(L^2(\mathfrak p_\lambda))} dx
=
\|f\|_{L^1(G)}.
$$
since the unitarity of $\pi^\lambda$ implies $\|(\pi^\lambda_x)^* \|_{{\mathcal L}(L^2(\mathfrak p_\lambda))}=1$.

\begin{example}\label{ex:Heisenberg2}
In  the Heisenberg group $\mathbf{H}^d$, using the link exhibited in Example~\ref{ex:Heisenberg1} between the coordinates  in the basis $(P_j^{(\lambda)},Q_j^{(\lambda)})_{1\leq j\leq d}$ and the variables $(x,y,s)$ of Section~\ref{s:exampleinheis}, we obtain that the Fourier transform  of $f\in\mathcal S(\mathbf{H}^d)$ writes 
$$\forall \Phi\in \mathcal S(\R^d),\;\; \mathcal Ff(\lambda)  \Phi(\xi)=
 \left\{
\begin{array} l
\int_{\R^{2d+1} }{\rm e}^{i\lambda s + \frac i 2 \lambda x\cdot y + i\sqrt\lambda\, \xi\cdot y } \Phi(\xi +\sqrt\lambda \,x) dx\, dy\, ds\;\;\mbox{if}\;\;\lambda>0,\\
\int_{\R^{2d+1} }{\rm e}^{i\lambda s + \frac i 2 \lambda x\cdot y - i\sqrt{|\lambda|}\, \xi\cdot y } \Phi(\xi +\sqrt{|\lambda|} \, x) dx\, dy\, ds\;\;\mbox{if}\;\;\lambda<0.
\end{array}
\right.
$$
\end{example}

The Fourier transform can be extended to an isometry from~$L^2(G)$ onto the Hilbert
space of measurable families~$ A  = \{ A (\lambda ) \}_{\lambda \in{\mathfrak z}^*\setminus \{0\}}$
 of operators on~$L^2( \mathfrak p_\lambda)$ which are
Hilbert-Schmidt for almost every~$\lambda\in{\mathfrak z}^*\setminus \{0\}$,  with norm
\[ \|A\| := \left( \int_{\mathfrak z^* \setminus\{0\}}
\|A (\lambda )\|_{HS (L^2( \mathfrak p_\lambda))}^2 |\lambda|^d \, d\lambda
\right)^{\frac{1}{2}}<\infty  \, .\]
  We have the  Fourier-Plancherel formula:
$$ \int_G  |f(x)|^2  \, dx
=  c_0 \, \int_{\mathfrak z^* \setminus\{0\}} \|{\mathcal F}f(\lambda)\|_{HS(L^2( \mathfrak p_\lambda))}^2 |\lambda|^d  \,  d\lambda   \,,
$$
where $c_0>0$ is a computable constant. 

\begin{remark}\label{rem:Ghat}
This relation shows that Plancherel measure   of $\widehat G$  is $d\mu:= c_0|\lambda|^d d\lambda$ and   is supported in the subset $\{\mbox{class of} \ \pi^\lambda \ : \ \lambda \in \mathfrak z^* \setminus\{0\}\}$ of~$\widehat G$, in particular the subset $\{\mbox{class of} \ \pi^{(0,\omega)} \ : \ \omega \in \mathfrak v^* \}$ of $\widehat G$ is of mass~$0$ for the Plancherel measure. Therefore, the integral on $\mathfrak z^*\setminus \{0\}$ of the Fourier-Plancherel formula can be thought as an integral on $\widehat G$, thinking $\mathfrak v^*$ above $\{\lambda=0\}$, as suggested by the notation. 
\end{remark}

Finally, an inversion formula for~$ f \in {\mathcal S}(G)$ and~$x\in G$  writes:
\begin{equation}
\label{inversionformula} f(x)
= c_0 \, \int_{\mathfrak z^* \setminus\{0\}} {\rm{Tr}} \, \Big(\pi^{\lambda}_{x} {\mathcal F}f(\lambda)  \Big)\, |\lambda|^d\,d\lambda \,,
\end{equation}
where ${\rm Tr}$ denotes the trace of operators of ${\mathcal L}(L^2({\mathfrak p}_\lambda))$ (see \cite[Theorem 2.7]{Tay}).
This   formula makes sense since for Schwartz functions $f \in {\mathcal S}(G)$, the operators ${\mathcal F}f(\lambda)$, $\lambda\in \mathfrak z^*\setminus\{0\}$, are trace-class, with enough regularity in $\lambda$ so that $\int_{\mathfrak z^* \setminus\{0\}} {\rm{Tr}} \, \Big| {\mathcal F}f(\lambda)  \Big|\, |\lambda|^d\,d\lambda$ is finite.

\medskip 

To conclude this section, it is important to notice that the differential operators have a  Fourier resolution  that allows to think them as Fourier multipliers. In particular, the resolution of the sub-Laplacian $-\Delta_G$ is well-understood
$$\forall f\in{\mathcal S}(G),\;\;\mathcal F(-\Delta_G f)(\lambda)= H(\lambda) {\mathcal F} (f) (\lambda).$$
At $\pi^{(0,\omega)}$, $\omega\in{\mathfrak v}^*$, it is the number
${\mathcal F} (-\Delta_G)(0,\omega) = |\omega|^2,$
and  at $\pi^\lambda$, $\lambda\in{\mathfrak z}^*\setminus\{0\}$, it is the unbounded operator
\begin{equation}\label{def:H}
 H(\lambda)=|\lambda| \sum_{ j=1}^{d} \left( -\partial_{\xi_j}^2+\xi_j^2\right),
\end{equation}
where we have used the identification $\mathfrak p_\lambda\sim \R^d$ and the observation that for $\lambda\in\mathfrak z^*\setminus\{0\}$, $ f\in L^2(\mathfrak p_\lambda)$ and $1\leq j\leq d$,
$$\mathcal F (P^{(\lambda)}_jf)=i\partial_{\xi_j}\mathcal F(f) \; \;\mbox{and}\; \;\mathcal F(Q^{(\lambda)}_jf)=\xi_j\mathcal F( f).$$
One writes 
\begin{equation}\label{calcul:pi(P)}
\pi^\lambda(P^{(\lambda)}_j)=i\partial_{\xi_j} \;  \;\mbox{and}\;\; \pi^\lambda(Q^{(\lambda)}_j)=\xi_j,\;\;\lambda\in\mathfrak z^*\setminus\{0\},\;\;1\leq j\leq d.
\end{equation}

\subsection{Semi-classical pseudodifferential operators on quotient manifolds} \label{sec:semiclassic}
As observables of quantum mechanics are functions on the phase space, the symbols of pseudodifferential operators on $M$ are functions defined on $M\times \widehat{G}$. In this non-commutative framework, they have the same properties as the Fourier transform and they are operator-valued symbols.

\medskip

Following~\cite{FF,FF1}, we consider the class of symbols ${{\mathcal A}_0}$ of fields of operators defined on $M\times \widehat G$  by
$$\sigma(x,\lambda)\in{\mathcal   L}(L^2(\mathfrak p_\lambda)),\;\;(x,\lambda)\in M\times \widehat G,$$
that are smooth in the variable $x$ and   Fourier transforms of  functions of the set~${\mathcal S}(G)$ of Schwartz functions on~$G$
 in the variable $\lambda$: for all $(x,\lambda)\in M\times \widehat G$,
\begin{equation}\label{e:defsigma}
\sigma(x,\lambda) = {\mathcal F} \kappa_x (\lambda),\;\;\kappa\in\mathcal C^\infty(M,\mathcal S(G)).
\end{equation}
A similar class of symbols in the Euclidean context was introduced in \cite[Section 3]{LP93}.
\noindent Note that we kept in \eqref{e:defsigma} the notation $\lambda$ also for the parameters $(0,\omega)$, $\omega\in\mathfrak v^*$.
In this case, the operator  ${\mathcal F} \kappa_x ((0,\omega))=\sigma(x,(0,\omega))$ reduces to a complex number since the associated Hilbert space is $\mathbb{C}$.

\smallskip

 If $\eps>0$, we associate with $\kappa_x$ (and thus with $\sigma(x,\lambda)$) the function $\kappa_x^\eps$ defined on $G$ by
\begin{equation} \label{e:kappaeps}
\kappa^\eps_x(z)= \eps^{-Q} \kappa_x(\delta_{\eps^{-1}}(z)),
\end{equation}
We then  define the semi-classical pseudodifferential operator ${\rm Op}_\eps(\sigma)$ via the identification of functions $f$ on $M$ with $\widetilde{\Gamma}$-leftperiodic functions on $G$:
\begin{equation}\label{def:pseudo}
{\rm Op}_\eps(\sigma)f(x)=  \int_{ G} \kappa_x^\eps(y^{-1} x) f(y)dy.
\end{equation}
When $\eps=1$, we omit the index $\eps$ and just write ${\rm Op}$ instead of ${\rm Op}_\eps$.
\begin{remark}
The formulas \eqref{def:pseudo}, \eqref{e:kappaeps} and \eqref{e:defsigma} may be compared to the formulas of the semiclassical (standard) quantization on the torus $\T^n= (\R/2\pi\Z)^{n}$, namely, for $\sigma(x,\xi), x\in\T^n, \xi\in\R^n$ and $f$ a  $(2\pi\Z)^n$-periodic function,
\begin{align*}
\Op_\eps^{\T^n}(\sigma)f(x)&=\int_{\R^n}K^\eps\left(x,x-y\right)f(y)dy\\
 \text{where} \qquad & K^\eps(x,z)=\eps^{-n}K(x,\eps^{-1}z), \\
  K(x,w)=\frac{1}{(2\pi)^n}&\int_{\R^n}e^{iw\cdot\xi} \sigma(x,\xi)d\xi\in C^\infty(\T^n,\mathcal S(\R^n)), \\
 \text{i.e.} \quad& \sigma(x,\xi)=(\mathcal{F}^{\R^n}_w K)(x,\xi).
\end{align*}
\end{remark}

\medskip 

We observe the following facts (the proofs of points (3) to (7) are discussed more in details in Appendix~\ref{sec:proofpseudo}).

\begin{enumerate}
\item
The operator ${\rm Op}_\eps(\sigma)$ is well-defined as an operator on $M$. Indeed,
\begin{align*}
{\rm Op}_\eps(\sigma) f(\gamma x)
=  \int_{ G} \kappa_{\gamma x}^\eps(y^{-1} \gamma x) f(y)dy
=  \int_{ G} \kappa_{x}^\eps(y^{-1} x) f(\gamma y)dy 
= {\rm Op}_\eps(\sigma) f( x).
\end{align*}
Here we have used  a change of variable and the relations $\kappa_{\gamma x}(\cdot)=\kappa_x(\cdot)$ and $f(\gamma  y)=f(y)$.

\medskip

\item Using \eqref{inversionformula} and \eqref{e:defsigma}, we have the useful identities 
\begin{align*}
{\rm Op}_\eps(\sigma)f(x)
=\eps^{-Q}  \int_{ G} \kappa_x(\delta_{\eps^{-1}}(y^{-1} x)) f(y)dy 
&= \int_{G\times \left(\mathfrak z^*\setminus \{0\}\right)} {\rm Tr} ( \pi^\lambda_{y^{-1} x} \sigma(x,\eps\cdot \lambda) ) f(y) |\lambda|^d d\lambda dy.
\end{align*}
In view of Remark~\ref{rem:Ghat}, using the notations of the dilations on $\widehat G$, we have the general formula  (as in~\cite{FF1}, Remark~3.3)
$${\rm Op}_\eps(\sigma)f(x)
= \int_{G\times \widehat G} {\rm Tr} ( \pi_{y^{-1} x} \sigma(x,\eps\cdot \pi) ) f(y)  d\mu(\pi) dy.$$

\item
The kernel of ${\rm Op}_\eps(\sigma)$ is given by
$$k_\eps(x,y) = \sum_{\gamma\in \widetilde{\Gamma}}  \kappa_{ x}^\eps(\gamma y^{-1} x)$$
\item The family of operators $\left({\rm Op}_\eps(\sigma)\right)_{\eps >0}$ is uniformly bounded in ${\mathcal L}(L^2(M))$:
\begin{equation}\label{eq:boundedness}
\| {\rm Op}_\eps (\sigma)\|_{\mathcal L(L^2(M))} \leq  \int_{G} \sup_{x\in M}  |\kappa_x(y)|dy.
\end{equation}
\item Semi-classical pseudodifferential operators act locally: let $\sigma\in{\mathcal A}_0$ be compactly supported in an open set $\Omega$ such that $\overline \Omega$ is strictly included in a fundamental domain $\mathcal B$ of $\widetilde{\Gamma}$ and $\chi\in \mathcal C^\infty_c(\mathcal B)$ such that $\chi \sigma=\sigma$. Then, by definition
$${\rm Op}_\eps(\sigma)={\rm Op}_\eps(\chi \sigma) =\chi {\rm Op}_\eps(\sigma)$$
and for all $N\in \N$, there exists a constant $c_N$ such that, for any $\eps>0$,
\begin{equation}\label{eq:localisation}
\left\| {\rm Op}_\eps(\sigma) -  \chi   {\rm Op}_\eps(\sigma) \chi \right\| _{ \mathcal L(L^2(M))}=
\left\| {\rm Op}_\eps(\sigma) -    {\rm Op}_\eps(\sigma) \chi \right\| _{ \mathcal L(L^2(M))}
\leq c_N \,\eps^N.
\end{equation}
 \end{enumerate}

\begin{remark}\label{correspGM}
The last property is crucial for our analysis since it allows to transfer results obtained in the nilpotent group $G$ for functions in $L^2_{\rm loc} (G)$  to the case of square-integrable functions of the homogeneous manifold~$M$.  Indeed, if $f\in L^2(M)$, then $f$ can be identified to a $\widetilde{\Gamma}$-leftperiodic function on $L^2_{loc}(G)$. In particular, we have  $\chi f\in L^2(G)$ and $ {\rm Op}_\eps( \sigma) \chi f= \chi {\rm Op}_\eps( \sigma) \chi f $ coincides with the standard definition of~\cite{FF1,FF}. Then, for $f,g\in L^2(M)$ and $\sigma,\;\chi$ as before, we have for all $N\in\N$ 
\begin{equation}\label{eq:correspGM}
 \left({\rm Op}_\eps(\sigma) f,g\right)_{L^2(M)} =  \left({\rm Op}_\eps(\sigma)\chi f,\chi g\right)_{L^2(G)} +O(\eps^N \| f\|_{L^2(M)}\|g\|_{L^2(M)}).
 \end{equation}
\end{remark}

This correspondance between computations in $M$ and in $G$ will be further developed at the beginning of Section \ref{s:WPnoncom}, notably through the periodization operator $\mathbb{P}$. It is also at the root of the  next two properties. For stating them, we introduce the difference operators, acting on ${\mathcal L}(L^2(\mathfrak p_\lambda))$:
$$
\Delta_{p_j}^{\lambda}= |\lambda|^{-1/2}[\xi_j, \cdot ] ,
\qquad
\Delta_{q_j}^{\lambda} = |\lambda|^{-1/2}[i\partial_{\xi_j}, \cdot],
\quad 1\leq j\leq d.
$$
We also use the operators $\pi^\lambda(P^{(\lambda)}_j)$ and $\pi^\lambda(Q^{(\lambda)}_j)$ calculated in~\eqref{calcul:pi(P)}.

\begin{enumerate}
\item[(6)] The following symbolic calculus result holds:
\end{enumerate}

\begin{proposition}\label{prop:symbcal}
Let $\sigma\in {\mathcal A}_0$. Then, in ${\mathcal L}(L^2(M))$,
\begin{equation}\label{adjoint}
{\rm Op}_\eps(\sigma)^*=  {\rm Op}_\eps (\sigma^*) -{\eps} \,{\rm Op}_\eps(P^{(\lambda)}\cdot \Delta^\lambda_p \sigma^*+Q^{(\lambda)}\cdot \Delta^\lambda_q \sigma^*)+O(\eps^2).
\end{equation}
Let $\sigma_1,\sigma_2 \in {\mathcal A}_0$. Then in ${\mathcal L}(L^2(M))$,
\begin{equation}\label{composition}
{\rm Op}_\eps(\sigma_1)\circ {\rm Op}_\eps(\sigma_2) = {\rm Op}_\eps(\sigma_1\,\sigma_2) -\eps\,  {\rm Op}_\eps\left( \Delta_p^\lambda \sigma_1 \cdot P^{(\lambda)}\,\sigma_2+\Delta^\lambda_q \sigma_1 \cdot Q^{(\lambda)}\, \sigma_2\right)+O(\eps^2).
\end{equation}
\end{proposition}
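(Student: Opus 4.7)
The plan is to deduce both identities from a calculation on the group $G$, and then to transfer them to $M$ using the locality property \eqref{eq:localisation} and Remark~\ref{correspGM}. Concretely, I would first remark that since every symbol in $\mathcal{A}_0$ is $\mathcal{C}^\infty$ in $x$, one can cover the compact fundamental domain $\mathcal{B}$ by a finite partition of unity adapted to \eqref{eq:localisation}; applying \eqref{eq:correspGM} with $N$ arbitrarily large reduces the problem to checking the analogous symbolic calculus for operators acting on $L^2(G)$ whose symbols have kernel $\kappa_x\in\mathcal{C}^\infty_c(G,\mathcal{S}(G))$, up to errors absorbed into the $O(\eps^2)$ remainders.

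On $G$, for the composition, write the double-integral form
\[
{\rm Op}_\eps(\sigma_1)\circ{\rm Op}_\eps(\sigma_2)f(x)= \int_G\int_G \kappa^{(1),\eps}_x(u)\,\kappa^{(2),\eps}_{xu^{-1}}(z^{-1}xu^{-1})\,f(z)\,dz\,du,
\]
obtained from the change of variable $u=y^{-1}x$. The idea is to Taylor-expand the symbol of the second operator with respect to the base point, comparing $\kappa^{(2),\eps}_{xu^{-1}}$ to $\kappa^{(2),\eps}_x$. In the $(p,q,z)$-coordinates \eqref{eqxpqz} associated with a fixed $\lambda$, the displacement $xu^{-1}$ differs from $x$ by a group element of size $O(\eps)$ when $u$ is rescaled by $\delta_{\eps^{-1}}$; thus
\[
\kappa^{(2),\eps}_{xu^{-1}}=\kappa^{(2),\eps}_x-\eps\sum_{j=1}^{d}\Bigl(p_j(\delta_{\eps^{-1}}u)\,P^{(\lambda)}_j \kappa^{(2),\eps}_x+q_j(\delta_{\eps^{-1}}u)\,Q^{(\lambda)}_j\kappa^{(2),\eps}_x\Bigr)+O(\eps^2),
\]
the first-order derivatives in $x$ being identified with the left-invariant vector fields $P^{(\lambda)}_j,Q^{(\lambda)}_j$ thanks to \eqref{eq:Xf}, and the central variable $z$ yielding only an $O(\eps^2)$ contribution because of its $\delta_t$-weight $t^2$.

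The second step is to take the Fourier transform of the resulting expansion in the convolution variable. Using \eqref{calcul:pi(P)} and the definition of $\Delta_p^\lambda,\Delta_q^\lambda$, the multiplication by $p_j(\delta_{\eps^{-1}}u)$ (respectively $q_j$) combined with the convolution structure produces, on the symbol side, the difference operator $\Delta_p^\lambda$ (respectively $\Delta_q^\lambda$) applied to $\sigma_1$, composed on the right with $\pi^\lambda(P^{(\lambda)}_j)$ (respectively $\pi^\lambda(Q^{(\lambda)}_j)$) acting on $\sigma_2$. Summing in $j$ gives exactly the first-order term in \eqref{composition}. The identity \eqref{adjoint} is obtained by the same expansion applied to the kernel $(x,y)\mapsto \overline{\kappa^\eps_y(x^{-1}y)}$ of the adjoint, after the substitution $y=xu^{-1}$ that re-centers the base point at $x$.

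The main obstacle is to ensure that the various remainders are genuinely $O(\eps^2)$ in $\mathcal{L}(L^2)$ rather than just pointwise. This requires two ingredients: first, a uniform Schur-type bound of the form \eqref{eq:boundedness} applied to the remainder kernel, which is available because the second-order Taylor terms still belong to $\mathcal{C}^\infty_c(G,\mathcal{S}(G))$; second, a careful use of the Baker--Campbell--Hausdorff formula at order two to check that the cross terms coming from the non-commutativity of $G$ and the mismatch between $xu^{-1}$ and $x\cdot\delta_\eps(\cdot)$ contribute only to the $O(\eps^2)$ remainder. Once these quantitative bounds are in hand, pulling back to $M$ via \eqref{eq:correspGM} concludes the proof.
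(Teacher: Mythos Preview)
Your reduction from $M$ to $G$ via a partition of unity and the locality estimate \eqref{eq:localisation}/\eqref{eq:correspGM} is exactly the paper's approach. The one difference is what happens after the reduction: the paper does \emph{not} reprove the symbolic calculus on $G$ but simply invokes Proposition~3.6 of \cite{FF1}, where \eqref{adjoint} and \eqref{composition} are already established on the group. Your proposal instead sketches a direct proof on $G$ through a Taylor expansion of the base point of the second kernel. That route is legitimate and more self-contained, and the Schur-type control of the remainder you describe is the right mechanism for the $O(\eps^2)$ bound.

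One point to tighten: in your Fourier-side identification of the first-order term you write that the $x$-derivative $P^{(\lambda)}_j\kappa^{(2),\eps}_x$ becomes, on the symbol side, a right composition by $\pi^\lambda(P^{(\lambda)}_j)$. But the Taylor expansion in the base variable produces the left-invariant vector field $P^{(\lambda)}_j$ acting on $\sigma_2(x,\lambda)$ \emph{in $x$}; this is the meaning of the notation $P^{(\lambda)}\sigma_2$ in \eqref{composition} (the direct analogue of $\partial_\xi\sigma_1\cdot\partial_x\sigma_2$ in the Euclidean product formula). The Fourier transform you take is only in the convolution variable, so it does not turn $x$-differentiation into multiplication by $\pi^\lambda(P^{(\lambda)}_j)$. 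With this correction your sketch reproduces the stated first-order term, and the rest of the argument (BCH control of the cross terms, Schur bound on the remainder, and the analogous expansion for the adjoint kernel) goes through as you indicate.
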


\medskip 

\begin{enumerate}
\item[(7)] The main contribution of the  function $(x,z)\mapsto \kappa_x (z)$ to the operator ${\rm Op}_\eps(\sigma)$, $\sigma(x,\lambda)=\mathcal F (\kappa_x) (\lambda) $ is due to its values close to $z={\bf 1}_G$.
\end{enumerate}

\begin{proposition} \label{prop:cutoffkernel}
 Let $\chi_0\in{\mathcal C}^\infty(G)$ be compactly supported close to $1_G$ and $\chi_\eps=\chi_0\circ \delta_\eps$. With $\sigma =\mathcal F (\kappa_x) (\lambda) $ we associate 
 $\sigma_\eps = \mathcal F (\kappa_x \chi_\eps).$
 Then, in $L^2(M)$, for all $N\in\N$,
 $${\rm Op}_\eps (\sigma)={\rm Op}_\eps (\sigma_\eps) +O(\eps^{N}).$$
 \end{proposition}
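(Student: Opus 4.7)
The plan is to apply Schur's test to the kernel of $R_\eps := {\rm Op}_\eps(\sigma) - {\rm Op}_\eps(\sigma_\eps)$. First I would observe, directly from \eqref{e:kappaeps}, that $(\kappa_x \chi_\eps)^\eps(z) = \kappa_x^\eps(z)\,\chi_0(z)$, since the dilation $\delta_\eps$ in $\chi_\eps = \chi_0 \circ \delta_\eps$ exactly cancels the $\delta_{\eps^{-1}}$ appearing in the definition of $(\cdot)^\eps$. Combining this with the kernel formula in property~(3), the kernel of $R_\eps$ on $M \times M$ takes the form
\[
r_\eps(x,y) = \sum_{\gamma \in \widetilde{\Gamma}} \kappa_x^\eps(y^{-1}\gamma x)\bigl(1 - \chi_0(y^{-1}\gamma x)\bigr).
\]

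Next I would bound $\sup_{x \in M} \int_M |r_\eps(x,y)|\, dy$ by unfolding the sum over $\widetilde{\Gamma}$ and the integral over a fundamental domain $\mathcal{B}$: for each fixed $x$, the translates $\mathcal{B}^{-1}\gamma x$ are essentially disjoint and cover $G$ as $\gamma$ ranges over $\widetilde{\Gamma}$, so that by unimodularity of $G$ the integral reduces to $\int_G |\kappa_x^\eps(z)|\,|1 - \chi_0(z)|\, dz$. After the change of variable $w = \delta_{\eps^{-1}} z$, whose Jacobian $\eps^Q$ cancels the $\eps^{-Q}$ in $\kappa_x^\eps$, this rewrites as $\int_G |\kappa_x(w)|\,|1 - \chi_0(\delta_\eps w)|\, dw$. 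Assuming $\chi_0 \equiv 1$ in a neighborhood of $1_G$ (the natural interpretation of the cutoff condition), the homogeneity $|\delta_\eps w| = \eps|w|$ of the quasi-norm \eqref{def:quasinorm} forces the factor $1 - \chi_0(\delta_\eps w)$ to vanish on $\{|w| \leq c_0/\eps\}$ for some $c_0>0$. The Schwartz decay of $\kappa_x$, uniform in $x$ because $\kappa \in C^\infty(M,\mathcal{S}(G))$ and $M$ is compact, then yields $\int_{|w| \geq c_0/\eps} |\kappa_x(w)|\, dw \leq C_N \eps^N$ for every $N$, by integrating pointwise bounds $(1+|w|)^{-K}$ with $K$ large in polar coordinates with respect to the quasi-norm.

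The symmetric estimate on $\sup_{y \in M} \int_M |r_\eps(x,y)|\, dx$ is obtained by the same substitution $z = y^{-1}\gamma x$: the $x$-dependence in the integrand produces $\kappa_{\gamma^{-1} y z}^\eps(z) = \kappa_{yz}^\eps(z)$ by the $\widetilde{\Gamma}$-leftperiodicity of $\kappa$ in its first argument, which is then dominated uniformly by $\sup_{\xi \in M} |\kappa_\xi^\eps(z)|$, and the remainder of the argument is identical. Schur's test then yields $\|R_\eps\|_{\mathcal{L}(L^2(M))} = O(\eps^N)$ for every $N$, as claimed. The only nontrivial step is the unfolding in the second paragraph, which rests on the essential disjointness of the translates $\mathcal{B}^{-1}\gamma x$ coming from the fact that $\mathcal{B}$ is a fundamental domain for the \emph{left} action of $\widetilde{\Gamma}$; this is morally the same mechanism that underlies the boundedness estimate \eqref{eq:boundedness}.
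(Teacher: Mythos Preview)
Your proof is correct, and it takes a genuinely different route from the paper's. The paper does not estimate the kernel of $R_\eps$ directly on $M$. Instead, it first uses a partition of unity in $x$ to reduce to symbols $\sigma$ supported in a single fundamental domain $\mathcal B$, then invokes the localization property~(5) (equation~\eqref{eq:localisation}) to replace the pairing $({\rm Op}_\eps(\sigma)f,g)_{L^2(M)}$ by $({\rm Op}_\eps(\sigma)\chi f,\chi g)_{L^2(G)}$ for a cutoff $\chi\in C_c^\infty(\mathcal B)$, and finally quotes the analogous result on $G$ (Proposition~3.4 of~\cite{FF1}) to conclude. In other words, the paper's strategy is ``transfer to $G$ and cite''; yours is a self-contained Schur-test argument carried out directly on the periodized kernel on $M$. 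Your approach is more elementary and avoids the external reference, while the paper's approach illustrates the general principle (Remark~\ref{correspGM}) that pseudodifferential estimates on $G$ pass to $M$ through localization. Both are short; the paper's is perhaps more in keeping with the surrounding exposition, but yours stands on its own and makes the mechanism (Schwartz decay of $\kappa_x$ versus the gap $|w|\gtrsim \eps^{-1}$ forced by $1-\chi_0(\delta_\eps w)$) completely explicit.
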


\subsection{Semi-classical measures}\label{s:sclmeas}
When given a bounded sequence $(f^\eps)_{\eps>0}$ in $L^2(M)$, one defines the quantities $\ell_\eps(\sigma)$  in analogy with quantum mechanics as the  action of observables on this family, i.e. the families
$$\ell_\eps(\sigma)= \left({\rm Op}_\eps(\sigma) f^\eps,f^\eps\right),\;\; \sigma\in \mathcal A_0.$$
Since these quantities are bounded sequences of real numbers, it is then natural to study the asymptotic $\eps\rightarrow 0$. The families $(\ell_\eps(\sigma))_{\eps>0}$  have weak limits that depend linearly on $\sigma$ and enjoy additional properties. We call semi-classical measure of $(f^\eps)_{\eps>0}$ any of these linear forms.

\medskip

For describing the properties of semi-classical measures,   we need  to introduce a few notations.
If~$Z$ is a locally compact Hausdorff set, we denote by ${\mathcal M}(Z)$ the set of finite  Radon measures on~$Z$ and by ${\mathcal M}^+(Z)$ the subset of its positive elements. Considering the metric space
$M\times \widehat G$ endowed with the field of complex Hilbert spaces $L^2(\mathfrak p_\lambda)$ defined above elements $(x,\lambda)\in M\times \widehat G$, we denote by
	$ \widetilde{\mathcal M}_{ov}(M\times \widehat G) $
	 the set of pairs $(\gamma,\Gamma)$ where $\gamma$ is a positive Radon measure on~$M\times \widehat G$
	and $\Gamma=\{\Gamma(x,\lambda)\in {\mathcal L}(L^2(\mathfrak p_\lambda)):\lambda \in \widehat G\}$ is a measurable field of trace-class operators
such that
\[
\|\Gamma d \gamma\|_{\mathcal M}:=
\int_{M\times \widehat G}{\rm Tr}(
 |\Gamma(x,\lambda)|)d\gamma(x,\lambda)
<\infty.
\]
Here, as usual, $|\Gamma|:=\sqrt{\Gamma\Gamma^*}$. Note that $\Gamma(x,\lambda)$ is defined as a linear operator on the space $L^2(\mathfrak p_\lambda)$ which does not depend on $x$ but which depends on $\lambda$. Considering that 	
 two pairs $(\gamma,\Gamma)$ and $(\gamma',\Gamma')$
in $\widetilde {\mathcal M}_{ov}(M\times \widehat G)$
are {equivalent} when there exists a measurable function $f:M\times \widehat G\to \mathbb C\setminus\{0\}$ such that
$$d\gamma'(x,\lambda) =f(x,\lambda)  d\gamma(x,\lambda)\;\;{\rm  and} \;\;\Gamma'(x,\lambda)=\frac 1 {f(x,\lambda)} \Gamma(x,\lambda)$$
for $\gamma$-almost every $(x,\lambda)\in M\times \widehat G $, we define the equivalence class of $(\gamma,\Gamma)$  by $\Gamma d \gamma$,
and the resulting quotient  by ${\mathcal M}_{ov}(M\times \widehat G)$. One checks readily that $\mathcal M_{ov} (M\times\widehat G)$ equipped with the norm $\| \cdot\|_{{\mathcal M}}$ is a Banach space.

Finally, we say that
a pair $(\gamma,\Gamma)$
in $ \widetilde {\mathcal M}_{ov}(M\times \widehat G)$
 is {positive} when
$\Gamma(x,\lambda)\geq 0$ for $\gamma$-almost all $(x,\lambda)\in M\times \widehat G$.
In  this case, we write  $(\gamma,\Gamma)\in  \widetilde {\mathcal M}_{ov}^+(M\times \widehat G)$,
and $\Gamma d\gamma \geq 0$ for $\Gamma d\gamma \in {\mathcal M}_{ov}^+(M\times \widehat G)$.

\medskip

With these notations in mind, one can mimic the proofs of~\cite{FF}, considering the $C^*$-algebra~$\mathcal A$  obtained as the closure of~$\mathcal A_0$ for the norm $\sup _{(x,\lambda)\in M\times \widehat G} \| \sigma(x,\lambda)\|_{\mathcal L(L^2(\mathfrak p_\lambda))}$. Indeed, the properties of this algebra depend on those of  $\widehat G$  and the analysis of the set and of~\cite{FF1,FF} also applies in this context.  Then, arguing as in~\cite{FF1,FF}, one can define semi-classical measures as  follows.

\begin{theorem}
Let $(f^\eps)_{\eps>0}$ be a bounded family in $L^2(M)$. There exist a sequence $(\eps_k)\in(\R_+^*)^{\N}$ with $\eps_k \underset{k\rightarrow +\infty}{\longrightarrow} 0$, and $ \Gamma d\gamma\in \mathcal{M}_{ov}^+(M\times \widehat{G})$ such that for all $\sigma\in\mathcal{A}$,
\begin{equation*}
({\rm Op}_{\eps_k}(\sigma)f^{\eps_k},f^{\eps_k})_{L^2(M)}  \underset{k\rightarrow +\infty}{\longrightarrow} \int_{M\times\widehat{G}} {\rm Tr}(\sigma(x,\lambda)\Gamma(x,\lambda))d\gamma(x,\lambda).
\end{equation*}
Given the sequence $(\eps_k)_{k\in\N}$, the measure $ \Gamma d\gamma$ is unique up to equivalence. Besides,
\begin{equation*}
\int_{M\times \widehat{G}}{\rm Tr}(\Gamma(x,\lambda))d\gamma(x,\lambda)\leq \limsup_{\eps\rightarrow 0}\|f^\eps\|^2_{L^2(M)}.
\end{equation*}
\end{theorem}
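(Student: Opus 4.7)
The plan is to adapt to the quotient $M$ the construction of operator-valued semi-classical measures performed for the nilpotent group $G$ in \cite{FF1,FF}. The main bridge between the two settings is the locality property \eqref{eq:localisation} together with the correspondence \eqref{eq:correspGM}, which allow to transfer sharp G\aa rding-type estimates and positivity from $G$ to $M$. The construction proceeds in four steps: uniform boundedness in the $C^*$-norm of $\mathcal{A}$, diagonal extraction of a subsequence, positivity of the limit functional, and finally disintegration as an operator-valued measure.

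First, I would introduce the continuous linear forms
\[
\ell_\eps : \mathcal{A}_0 \longrightarrow \C, \qquad \ell_\eps(\sigma) := (\Op_\eps(\sigma) f^\eps, f^\eps)_{L^2(M)},
\]
and establish a bound of the form $|\ell_\eps(\sigma)| \leq (\|\sigma\|_{\mathcal{A}} + O(\eps))\sup_{\eps} \|f^\eps\|_{L^2(M)}^2$, where $\|\sigma\|_{\mathcal{A}} = \sup_{(x,\lambda)} \|\sigma(x,\lambda)\|_{{\mathcal L}(L^2(\mathfrak p_\lambda))}$. The naive bound \eqref{eq:boundedness} gives only an $L^1$-type estimate, so one needs the sharper argument: for $c > \|\sigma\|_{\mathcal{A}}$, approximate $c^2 - \sigma^*\sigma$ by $\tau^*\tau$ with $\tau \in \mathcal{A}_0$ and apply the adjoint and composition formulas of Proposition~\ref{prop:symbcal} to deduce $\Op_\eps(\sigma)^*\Op_\eps(\sigma) \leq c^2 \Id + O(\eps)$ in $\mathcal{L}(L^2(M))$. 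Since $\mathcal{A}$ is separable (via a countable subfamily of $\mathcal{A}_0$ built from Schwartz kernels smooth in $x$), a Cantor diagonal argument extracts a sequence $\eps_k \to 0$ along which $\ell_{\eps_k}(\sigma)$ converges for every $\sigma \in \mathcal{A}$ to some limit $\ell(\sigma)$ satisfying the same bound.

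Next, I would check that $\ell$ is a positive linear form on the $C^*$-algebra $\mathcal{A}$. For $\tau \in \mathcal{A}_0$, the composition and adjoint formulas give
\[
\Op_\eps(\tau^*\tau) = \Op_\eps(\tau)^* \Op_\eps(\tau) + O(\eps) \quad \text{in } \mathcal{L}(L^2(M)),
\]
so $\ell_\eps(\tau^*\tau) = \|\Op_\eps(\tau) f^\eps\|^2_{L^2(M)} + O(\eps) \geq -C\eps$. Letting $\eps=\eps_k \to 0$ and using density of $\{\tau^*\tau : \tau \in \mathcal{A}_0\}$ in the positive cone of $\mathcal{A}$, we deduce $\ell(\sigma) \geq 0$ for every $\sigma \geq 0$. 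Note that locality \eqref{eq:localisation} is invoked here to ensure that the $O(\eps)$ remainder terms, estimated originally on $G$ via \cite{FF1,FF}, translate correctly to $L^2(M)$.

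The main obstacle lies in the fourth step: representing the positive continuous linear form $\ell$ as the pairing $\sigma \mapsto \int_{M \times \widehat G} \Tr(\sigma(x,\lambda)\Gamma(x,\lambda))\, d\gamma(x,\lambda)$ for some class $\Gamma d\gamma \in \mathcal{M}_{ov}^+(M\times \widehat G)$. Following \cite[Section 2]{FF}, I would view $\mathcal{A}$ as a section algebra of the continuous field of $C^*$-algebras of compact operators $\{\mathcal{K}(L^2(\mathfrak p_\lambda))\}$ over the locally compact base $M\times \widehat G$. Restricting $\ell$ to a suitable commutative subalgebra of scalar-valued symbols yields, via Riesz--Markov, a positive Radon measure $\gamma$ on $M\times \widehat G$, and a noncommutative Radon--Nikodym step then produces the measurable field $\Gamma$ of positive trace-class operators. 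The delicate point is that the fibres $L^2(\mathfrak p_\lambda)$ vary with $\lambda$ and $\widehat G$ has a nontrivial topological structure (with the finite-dimensional part $\mathfrak v^*$ attached to $\lambda=0$); the resulting measurability ambiguity is precisely absorbed into the equivalence relation defining $\mathcal{M}_{ov}$. Uniqueness up to this equivalence follows because $\ell$ determines the pairing with every $\sigma \in \mathcal{A}$, and the mass bound
\[
\int_{M \times \widehat G} \Tr(\Gamma(x,\lambda))\, d\gamma(x,\lambda) = \|\Gamma d\gamma\|_{\mathcal{M}} = \|\ell\| \leq \limsup_{\eps \to 0} \|f^\eps\|_{L^2(M)}^2
\]
is read off from the $C^*$-norm bound on $\ell$ established in the first step, evaluated on an approximate identity of $\mathcal{A}$.
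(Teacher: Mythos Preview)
Your proposal is correct and follows essentially the same route as the paper, which does not give a self-contained proof but simply refers to \cite{FF1,FF} and remarks that the $C^*$-algebra $\mathcal{A}$ depends only on $\widehat G$, so the construction carries over to the quotient $M$. Your outline (sharp $C^*$-norm bound via the square-root trick and Proposition~\ref{prop:symbcal}, diagonal extraction, positivity from $\Op_\eps(\tau^*\tau)=\Op_\eps(\tau)^*\Op_\eps(\tau)+O(\eps)$, and disintegration over the continuous field of compact operators on $M\times\widehat G$) is exactly the argument of \cite{FF1,FF}, with the transfer from $G$ to $M$ handled by the locality property \eqref{eq:localisation} and \eqref{eq:correspGM}, as the paper intends.
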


We emphasize on the operator-valued nature of $\Gamma (x,\lambda) {\bf 1}_{\lambda\in \mathfrak z^*}(\lambda)$ in opposition to the fact that
$\Gamma (x,\lambda) {\bf 1}_{\lambda\in \mathfrak v^*}(\lambda)\in \R^+$ (since  finite dimensional representations are scalar operators).

\medskip

The link of semi-classical measures with the limit of energy densities $|f^\eps(x)|^2 dx$ will be discussed below, it is solved thanks to the notion of $\eps$-oscillating families (see Section \ref{s:epsosc}).

\subsection{Time-averaged semi-classical measures}
 \label{s:sclmeastime}
The local observability inequality takes into account time-averaged quadratic quantities of the solution of Schr\"odinger equation. Physically, it corresponds to an observation, i.e. the measurement of an observable during a certain time. For example,  when $\mathbb V=0$, the right-hand side of inequality~\eqref{obs_loc}  can be expressed with the  set of observables introduced in the previous section using the symbol $\sigma(x,\lambda)= {\bf 1}_{x\in M} \chi (H(\lambda))$ (see \eqref{def:H} for a definition of $H(\lambda)$). Therefore, when considering time-dependent families, as solutions to the Schr\"odinger equation~\eqref{e:Schrod}, we are interested in the limits of time-averaged quantities: let  $(u^\eps)_{\eps>0}$ be a bounded family in $L^\infty(\R,L^2(M))$, $\theta\in L^1(\R)$ and $\sigma\in {{\mathcal A}_0}$, we define
$$\ell_\eps(\theta, \sigma) = \int_{\R} \theta(t) \left({\rm Op}_\eps(\sigma) u^\eps(t) ,u^\eps(t)\right) _{L^2(M)} dt$$ and we are interested in the limit as $\eps$ goes to~$0$ of these quantities.

\medskip

When introduced,  semi-classical measures were first used for systems with a semi-classical time scaling, i.e. involving $\eps \partial_t$ derivatives, which is not the case here  when multiplying the equation~\eqref{e:Schrod} by $\eps^2$. It is then difficult to derive results for the semi-classical measures at each time  $t$. However, one can deduce results for the time-averaged semi-classical measures that hold almost everywhere in time. Indeed, these measures satisfy important geometric properties that can lead to their identification (for example in Zoll manifolds).
This was first remarked by~\cite{MaciaTorus} and led to important results in control~\cite{AM14,AFM15, MacRiv19}, but also for example  in the analysis of dispersion effects of operators arising in solid state physics~\cite{CFM19,CFM20}. This approach has been extended to $H$-type groups in~\cite{FF} and, arguing in the same manner as for the proof of Theorem~2.8 therein, we obtain the next result on the nilmanifold~$M$.

\begin{proposition}\label{prop:semiclas}
Let $(u^\eps)_{\eps>0}$ be a bounded family in $L^\infty(\R,L^2(M))$. There exist a sequence $(\eps_k)\in(\R_+^*)^{\N}$ with $\eps_k \underset{k\rightarrow +\infty}{\longrightarrow} 0$ and a map $t\mapsto \Gamma_t d\gamma_t$ in $L^\infty(\R,\mathcal{M}_{ov}^+(M\times \widehat{G}))$ such that we have for all $\theta\in L^1(\R)$ and $\sigma\in\mathcal{A}$,
\begin{equation*}
\int_\R \theta(t)({\rm Op}_{\eps_k}(\sigma)u^{\eps_k}(t),u^{\eps_k}(t))_{L^2(M)} dt  \underset{k\rightarrow +\infty}{\longrightarrow} \int_{\R\times M\times\widehat{G}} \theta(t){\rm Tr}(\sigma(x,\lambda)\Gamma_t(x,\lambda))d\gamma_t(x,\lambda)dt.
\end{equation*}
Given the sequence $(\eps_k)_{k\in\N}$, the map $t\mapsto \Gamma_td\gamma_t$ is unique up to equivalence. Besides,
\begin{equation*}
\int_\R\int_{M\times \widehat{G}}{\rm Tr}(\Gamma_t(x,\lambda))d\gamma_t(x,\lambda)dt\leq \limsup_{\eps\rightarrow 0}\|u^\eps\|^2_{L^\infty(\R,L^2(M))}.
\end{equation*}
\end{proposition}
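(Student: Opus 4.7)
The plan is to mimic the standard weak-$*$ compactness scheme that produces semi-classical measures, now with a time-average and with the operator-valued formalism of Sections~\ref{sec:semiclas} and \ref{s:sclmeas}. For each $\eps>0$, I define the linear form
$$L_\eps(\theta\otimes\sigma) := \int_\R \theta(t) \left({\rm Op}_\eps(\sigma) u^\eps(t), u^\eps(t)\right)_{L^2(M)}\, dt$$
on the algebraic tensor product $L^1(\R)\otimes \mathcal A_0$. The $L^2(M)$-continuity \eqref{eq:boundedness} of ${\rm Op}_\eps(\sigma)$, together with the assumed uniform bound $C_0:=\sup_\eps \|u^\eps\|_{L^\infty(\R,L^2(M))}^2<\infty$, yields $|L_\eps(\theta\otimes\sigma)| \leq C_0\,\|\theta\|_{L^1(\R)}\,\|\sigma\|_{\mathcal A}$. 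Hence $L_\eps$ extends uniquely to a continuous linear form on the Banach space $L^1(\R;\mathcal A)$, uniformly in $\eps$.

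Second, I extract a weak-$*$ cluster point. Since $\mathcal A$ is the $C^*$-closure of $\mathcal A_0$, it is separable, so $L^1(\R;\mathcal A)$ is a separable Banach space and its closed unit ball in the dual is weak-$*$ sequentially compact. Therefore there exist a sequence $(\eps_k)_{k\in\N}$ tending to $0$ and a continuous linear form $L_0$ on $L^1(\R;\mathcal A)$ such that $L_{\eps_k}\to L_0$ weak-$*$. Identifying the dual of $L^1(\R;\mathcal A)$ with $L^\infty_{w*}(\R;\mathcal A')$ produces, for almost every $t$, a continuous linear functional $\ell_t:\mathcal A\to\C$ such that
$$L_0(\theta\otimes\sigma) = \int_\R \theta(t)\, \ell_t(\sigma)\, dt.$$
Measurability of $t\mapsto \ell_t$ is inherited from the disintegration and will be enough for the conclusion because $\mathcal A$ is separable (test against a countable dense subfamily).

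Third, I show $\ell_t$ is \emph{positive} on $\mathcal A$ and identify it with $\Gamma_t\, d\gamma_t\in \mathcal M_{ov}^+(M\times \widehat G)$. The positivity follows from a sharp G\aa{}rding-type inequality in this subelliptic setting: if $\sigma(x,\lambda)\geq 0$ for all $(x,\lambda)\in M\times \widehat G$, then ${\rm Op}_\eps(\sigma) \geq -C\eps\,\Id$ in $\mathcal L(L^2(M))$. This G\aa{}rding estimate is established on $G$ in~\cite{FF1} and transfers to $M$ via the localization property \eqref{eq:localisation}: using a partition of unity subordinated to translates of a fundamental domain of $\widetilde\Gamma$, one reduces to symbols supported in a single fundamental domain, for which ${\rm Op}_\eps$ on $M$ agrees modulo $O(\eps^N)$ with the corresponding operator on $G$ as recorded in Remark~\ref{correspGM} and \eqref{eq:correspGM}. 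Pairing $L_\eps$ with $\theta\geq 0$ and letting $\eps\to 0$ gives $\int \theta(t)\ell_t(\sigma)\,dt\geq 0$, hence $\ell_t\geq 0$ for a.e.\ $t$. The non-commutative Riesz representation for positive continuous forms on $\mathcal A$ (exactly as in~\cite{FF}) then yields a scalar positive Radon measure $\gamma_t$ on $M\times \widehat G$ and a $\gamma_t$-measurable field $(x,\lambda)\mapsto \Gamma_t(x,\lambda)$ of positive trace-class operators on $L^2(\mathfrak p_\lambda)$ such that $\ell_t(\sigma) = \int_{M\times \widehat G}\Tr(\sigma(x,\lambda)\Gamma_t(x,\lambda))\,d\gamma_t(x,\lambda)$, with the class of $\Gamma_t\,d\gamma_t$ uniquely determined. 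The total-mass bound is obtained by approximating $\sigma=\Id_{L^2(\mathfrak p_\lambda)}$ by a sequence in $\mathcal A_0$ (via compactly supported cut-offs in $\lambda$ as in Proposition~\ref{prop:cutoffkernel}) and using the trivial bound $({\rm Op}_\eps(\Id)u^\eps,u^\eps) \leq \|u^\eps\|_{L^2(M)}^2$ at the pre-limit level.

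The main obstacle is the passage from results available on $G$ in~\cite{FF} (existence of operator-valued measures, positivity, non-commutative Riesz representation) to analogous results on the compact quotient $M$. This is exactly what is afforded by items (5)--(7) of Section~\ref{sec:semiclassic}: pseudodifferential operators on $M$ act \emph{locally} modulo $O(\eps^\infty)$, so after a fixed choice of smooth partition of unity subordinated to the $\widetilde\Gamma$-translates of a fundamental domain, each computation on $M$ reduces to a finite sum of computations on $G$ against $\chi f$, $\chi g$ with $\chi$ compactly supported. A secondary technical point is ensuring joint measurability of $(t,x,\lambda)\mapsto \Gamma_t(x,\lambda)$; this is handled by working along a countable dense family of symbols in $\mathcal A$ and invoking separability of the predual, exactly as in the proof of Theorem~2.8 of~\cite{FF}.
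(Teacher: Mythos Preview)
Your outline matches the route the paper takes (it simply defers to the proof of Theorem~2.8 in~\cite{FF}, transported from $G$ to $M$ via the localization property~\eqref{eq:localisation} and Remark~\ref{correspGM}), and all the key ingredients you list---weak-$*$ extraction, the sharp G\aa rding inequality from~\cite{FF1}, the non-commutative Riesz representation, and separability of $\mathcal A$---are exactly the ones used there.

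There is one logical slip to fix. You write that \eqref{eq:boundedness} yields $|L_\eps(\theta\otimes\sigma)|\le C_0\|\theta\|_{L^1}\|\sigma\|_{\mathcal A}$, and then extend $L_\eps$ to $L^1(\R;\mathcal A)$ \emph{before} taking the limit. But \eqref{eq:boundedness} controls $\|{\rm Op}_\eps(\sigma)\|$ by the kernel norm $\int_G\sup_x|\kappa_x(y)|\,dy$, which dominates $\|\sigma\|_{\mathcal A}$, not the other way around; so for fixed $\eps$ the map $\sigma\mapsto L_\eps(\theta\otimes\sigma)$ is not known to be $\mathcal A$-continuous. The correct order (and the one in~\cite{FF}) is: extract first on a countable dense subset of $L^1(\R)\otimes\mathcal A_0$ using the kernel-norm bound, then establish positivity of the \emph{limit} via G\aa rding, and finally use that a positive linear form on a $C^*$-algebra is automatically $\|\cdot\|_{\mathcal A}$-continuous to extend $\ell_t$ from $\mathcal A_0$ to $\mathcal A$. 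With this reordering your argument goes through.
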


\subsubsection{$\eps$-oscillating families} \label{s:epsosc}
The link between semi-classical measures and the weak limits of  time-averaged energy densities is solved thanks to the notion of $\eps$-oscillation.
Let $(u^\eps)_{\eps>0}$ be a bounded family in $L^\infty(\R,L^2(M))$.
We say that the family $(u^\eps)_{\eps>0}$ is {\it uniformly $\eps$-oscillating} when we have for all $T>0$,
$$
\limsup_{\eps\rightarrow 0} \sup_{t\in[-T,T]} \left\|{\bf 1}_{-\eps^2{\Delta}_M>R} u^\eps(t)
 \right\|_{L^2(M)}\Tend{R}{+\infty}0.
 $$

\begin{proposition}
\label{prop:eops}[\cite{FF}Proposition~5.3]
Let $(u^\eps)\in L^\infty(\R, L^2(M))$ be a uniformly $\eps$-oscillating family admitting a time-averaged semi-classical measure $t\mapsto \Gamma_t d\gamma_t$ for the sequence $(\eps_k)_{k\in \mathbb N}$. Then
 for all
  $\phi\in{\mathcal C}^\infty (M)$ and $\theta\in L^1(\R)$,
$$
\lim_{k\rightarrow +\infty}  \int_{\R\times M} \theta(t) \phi(x) |u^{\eps_k}(t,x)|^2 dxdt
=\int_{\R}  \theta(t)
\int_{M\times \widehat G}
\phi(x)  {\rm Tr} \left(\Gamma_t(x,\lambda) \right) d\gamma_t(x,\lambda) \, dt,
$$
\end{proposition}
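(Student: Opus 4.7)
The plan is to rewrite $\int_M \phi(x)|u^\eps(t,x)|^2 dx$ as $(\phi u^\eps(t), u^\eps(t))_{L^2(M)}$ and to approximate the multiplication by $\phi$ by a semi-classical pseudodifferential operator so that Proposition~\ref{prop:semiclas} applies. The obstruction is that the constant-in-$\lambda$ symbol $(x,\lambda)\mapsto \phi(x)\Id$ does not lie in $\mathcal A$: it has no decay in $\lambda$. The role of the uniform $\eps$-oscillation hypothesis is precisely to legitimize a spectral truncation of $-\Delta_M$.

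Concretely, I would fix $\chi\in C_c^\infty(\R,[0,1])$ with $\chi\equiv 1$ on $[-1,1]$, set $\chi_R(s):=\chi(s/R)$ for $R>0$, and split
$$(\phi u^\eps,u^\eps) = (\phi\,\chi_R(-\eps^2\Delta_M)u^\eps,u^\eps) + (\phi\,(1-\chi_R)(-\eps^2\Delta_M)u^\eps,u^\eps).$$
Since $(1-\chi_R)(-\eps^2\Delta_M)$ is spectrally supported on $\{-\eps^2\Delta_M\geq R\}$ and $\phi$ is bounded, Cauchy--Schwarz together with the uniform $\eps$-oscillation hypothesis give
$$\limsup_{\eps\to 0}\left|\int_\R\theta(t)\bigl(\phi\,(1-\chi_R)(-\eps^2\Delta_M)u^\eps(t),u^\eps(t)\bigr)dt\right|\Tend{R}{+\infty}0,$$
so this high-frequency piece is negligible in the double limit $\eps\to 0$ followed by $R\to\infty$.

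For the main term, I would identify $\phi\,\chi_R(-\eps^2\Delta_M)$ with the pseudodifferential operator $\Op_\eps(\sigma_R)$ modulo an $O(\eps)$ error in $\mathcal L(L^2(M))$, whose symbol is
$$\sigma_R(x,\lambda)=\phi(x)\chi_R(H(\lambda)),\qquad \sigma_R(x,(0,\omega))=\phi(x)\chi_R(|\omega|^2),$$
with $H(\lambda)$ as in \eqref{def:H}. By the Fourier resolution $\mathcal F(-\Delta_G f)(\lambda)=H(\lambda)\mathcal F f(\lambda)$, the convolution kernel of $\chi_R(-\eps^2\Delta_G)$ is the inverse group Fourier transform of $\chi_R(\eps^2 H(\lambda))$; after the rescaling \eqref{e:kappaeps}, this corresponds at scale $\eps$ to the symbol $\chi_R(H(\lambda))$, which by the Hermite spectral decomposition of the harmonic oscillator is a Schwartz field of compact operators on the $L^2(\mathfrak p_\lambda)$ and hence lies in $\mathcal A$. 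Multiplying by the smooth function $\phi$ and applying Proposition~\ref{prop:symbcal}, then transferring from $G$ to $M$ via the locality property \eqref{eq:localisation} and Remark~\ref{correspGM}, yields the asserted identification. Proposition~\ref{prop:semiclas} then gives convergence of the main term to
$$\int_\R\theta(t)\int_{M\times\widehat G}\phi(x)\Tr\bigl(\chi_R(H(\lambda))\Gamma_t(x,\lambda)\bigr)d\gamma_t(x,\lambda)\,dt,$$
and letting $R\to\infty$, using the monotone convergence $0\leq\chi_R(H(\lambda))\nearrow \Id$ (verified on the Hermite basis of $L^2(\mathfrak p_\lambda)$) together with the positivity and $\gamma_t\,dt$-integrability of $\Tr(\Gamma_t)$, produces the stated right-hand side.

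The main obstacle is this symbolic identification: one must reconcile the spectral functional calculus of $-\eps^2\Delta_M$, defined on the quotient $M$, with the class \eqref{def:pseudo}--\eqref{e:defsigma} of pseudodifferential operators built from the group Fourier transform on $G$. The calculation is natural on $G$, where it reduces to the Hermite calculus of the harmonic oscillator; its transfer to $M$ relies crucially on the locality property \eqref{eq:localisation} and on the correspondence~\eqref{eq:correspGM} between the actions of $\Op_\eps$ on $L^2(M)$ and on $L^2(G)$.
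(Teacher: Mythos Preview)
Your approach is correct and is the standard one; the paper does not give its own proof of this proposition but cites \cite[Proposition~5.3]{FF}, and your argument is exactly the one underlying that reference. Indeed, the paper itself uses the same mechanism later (in the proof of Proposition~\ref{p:pmeasure}): it identifies $\Op_{h_k}(\varphi_j(x)\alpha(R^{-1}H(\lambda)))$ with $\varphi_j(x)\alpha(-h_k^2 R^{-1}\Delta_M)$ and passes to the double limit $k\to\infty$, $R\to\infty$ using $\eps$-oscillation.

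One minor simplification: you introduce an $O(\eps)$ error and invoke Proposition~\ref{prop:symbcal} to handle the product $\phi(x)\chi_R(H(\lambda))$, but this is unnecessary. Because the symbol factorizes as a function of $x$ times a function of $\lambda$, the quantization formula \eqref{def:pseudo} gives directly $\Op_\eps(\phi(x)\chi_R(H(\lambda)))f(x)=\phi(x)\cdot(\Op_\eps(\chi_R(H(\cdot)))f)(x)$, and the second factor is exactly the Fourier multiplier $\chi_R(-\eps^2\Delta_M)$ acting on $\widetilde\Gamma$-periodic functions. So the identification is exact, and neither symbolic calculus nor the locality transfer \eqref{eq:localisation} is needed at this step. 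Your treatment of the high-frequency tail and the final monotone-convergence argument in $R$ are exactly right.
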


\subsubsection{Semi-classical measures for families of Schr\"odinger equations}

Families of solutions to the Schr\"odinger equation~\eqref{e:Schrod} have special features. We recall that  in the (non compact) group~$G$, the operator 
$$H(\lambda)=|\lambda| \sum_{j=1}^{d} \left( -\partial_{\xi_j}^2+\xi_j^2\right)$$ introduced in~\eqref{def:H} is the Fourier resolution of the sub-Laplacian $-\Delta_G$ above $\lambda\in \mathfrak z^*\setminus \{0\}$.
Up to a constant, this is a quantum harmonic oscillator with 
discrete spectrum $\{|\lambda|(2n+d), n\in \N\}$ 
and finite dimensional eigenspaces.
For each eigenvalue $|\lambda|(2n+d)$, we denote by
$
\Pi_n^{(\lambda)}$ and
$\mathcal V^{(\lambda)}_n
$
 the corresponding spectral orthogonal projection and eigenspace.
 Even though the spectral resolution of $-\Delta_G$ and $-\Delta_M$ are quite different, we shall use the operator $H(\lambda)$ as one uses the function  $\xi\mapsto |\xi|^2$ on the phase space of the torus $\T^d$, when studying the operator $-\Delta_{\T^d}$.

\begin{proposition} \label{p:measure0}
Assume $\Gamma_t d\gamma_t$ is associated with a family of solutions to~\eqref{e:Schrod}.
\begin{enumerate}
\item For $(x,\lambda)\in  M\times \mathfrak z^*$
\begin{equation}\label{eq:decomp}
\Gamma_t(x,\lambda)=\sum_{n\in\N } \Gamma_{n,t}(x,\lambda)\;\;{ with}\;\; \Gamma_{n,t}(x,\lambda):= \Pi_n^{(\lambda)}\Gamma_t(x,\lambda) \Pi_n^{(\lambda)}.
\end{equation}
Moreover,
the map $(t,x,\lambda)\mapsto \Gamma_{n,t}(x,\lambda) d\gamma_t(x,\lambda)$ defines
 a continuous function from  $\R$ into the set of distributions on $M\times (\mathfrak z^*\setminus\{0\})$ valued in the finite dimensional space ${\mathcal L}(\mathcal V_n^{(\lambda)})$ which
satisfies
\begin{equation}\label{transport}
\left(\partial_t -(n+\frac d 2) {\mathcal Z}^{(\lambda)}\right)\left(\Gamma_{n,t}(x,\lambda)d\gamma_t(x,\lambda)\right)=0
\end{equation}
\item For $(x,(0,\omega))\in M\times \mathfrak v^*$, the scalar measure $\Gamma_t d\gamma_t$ is invariant under the flow
$$\Xi ^s:(x,\omega) \mapsto (x{\rm Exp} (s\omega\cdot V),\omega).$$
Here, $\omega\cdot V=\sum_{j=1}^{2d} \omega_jV_j$ where $\omega_j$ denote the coordinates of $\omega$ in the dual basis of $V$.
\end{enumerate}
\end{proposition}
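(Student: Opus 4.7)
I adapt the proof of the corresponding propagation result from \cite{FF} (written on $G$) to the quotient $M$, transferring computations via the localization estimate \eqref{eq:localisation} and the correspondence \eqref{eq:correspGM}. The engine is the time-derivative identity obtained from \eqref{e:Schrod}:
\begin{equation*}
\frac{d}{dt}(\Op_\eps(\sigma)u^\eps,u^\eps)_{L^2(M)}
= -\frac{i}{2\eps^2}\bigl([\Op_\eps(\sigma),\Op_\eps(H(\lambda))]u^\eps,u^\eps\bigr)
+ i\bigl([\Op_\eps(\sigma),\mathbb V]u^\eps,u^\eps\bigr),
\end{equation*}
in which I identify $-\eps^2\Delta_M$ with the $\Op_\eps$-quantization of the symbol $H(\lambda)$, using a spectral cutoff $\chi_R(-\eps^2\Delta_M)$ to control the unboundedness of $H$. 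After testing against $\theta\in C_c^\infty(\R)$, integrating by parts in $t$, and multiplying by $\eps^2$, the left-hand side becomes $O(\eps^2)$, which supplies the control needed in the limit $\eps\to 0$.

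\textbf{Part (1), decomposition.} For arbitrary $\sigma\in\mathcal A_0$, Proposition~\ref{prop:symbcal} gives $[\Op_\eps(\sigma),\Op_\eps(H)]=\Op_\eps([\sigma,H(\lambda)])+O(\eps)$, so passing to the limit yields
\begin{equation*}
\int_\R\theta(t)\int_{M\times\widehat G}\Tr\bigl([\sigma(x,\lambda),H(\lambda)]\Gamma_t(x,\lambda)\bigr)\,d\gamma_t(x,\lambda)\,dt = 0.
\end{equation*}
Cyclicity of the trace and the arbitrariness of $\sigma$ imply $[H(\lambda),\Gamma_t(x,\lambda)]=0$ for $\gamma_t$-a.e.\ $(x,\lambda)\in M\times(\mathfrak z^*\setminus\{0\})$, and the spectral expansion $H(\lambda)=\sum_n|\lambda|(2n+d)\Pi_n^{(\lambda)}$ forces \eqref{eq:decomp}. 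Continuity in $t$ and the finite-dimensional distributional statement follow from the block-diagonal form and the uniform bound on $\|\Gamma_t d\gamma_t\|_{\mathcal M}$.

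\textbf{Part (1), transport, and Part (2).} For \eqref{transport}, I restrict to symbols $\sigma$ commuting with $H(\lambda)$, that is $\sigma=\sum_n\Pi_n^{(\lambda)}\sigma\Pi_n^{(\lambda)}$, so that $[\sigma,H(\lambda)]=0$ and only the next term of the composition expansion in Proposition~\ref{prop:symbcal} survives. Using $\pi^\lambda(P_j^{(\lambda)})=i\partial_{\xi_j}$, $\pi^\lambda(Q_j^{(\lambda)})=\xi_j$, the explicit form of $H(\lambda)$, and the H-type bracket relating $[P_j^{(\lambda)},Q_j^{(\lambda)}]$ to $\mathcal Z^{(\lambda)}\in\mathfrak z$, the subprincipal symbol of $\tfrac{1}{2\eps^2}[\Op_\eps(\sigma),\Op_\eps(H)]$ reduces, on the block $\mathcal V_n^{(\lambda)}$, to $(n+d/2)\mathcal Z^{(\lambda)}\sigma$, acting as the left-invariant central vector field on $\sigma$; the term $[\Op_\eps(\sigma),\mathbb V]$ is itself $O(\eps)$ (only the horizontal difference operators $\Delta_p^\lambda,\Delta_q^\lambda$ appear) and vanishes in the limit. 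Assembling these, \eqref{transport} follows in the distributional sense. For Part~(2), above $\pi^{(0,\omega)}\in\mathfrak v^*$ the representation is one-dimensional and $H=|\omega|^2$ is scalar, so $[\sigma,H]=0$ automatically; the same subprincipal analysis produces the classical Poisson bracket $\{\sigma,|\omega|^2/2\}$ on $M\times\mathfrak v^*$, whose Hamiltonian vector field in the $x$-variable is exactly $\omega\cdot V=\sum_j\omega_j V_j$, yielding the $\Xi^s$-invariance.

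\textbf{Main obstacle.} The technical heart is the subprincipal computation in Part~(1): one has to match both the central direction $\mathcal Z^{(\lambda)}$ (which enters through the H-type identity $J_\lambda^2=-|\lambda|^2\,\Id$) and the precise eigenvalue $(n+d/2)$, which arises from summing the contributions of the $d$ pairs $(P_j^{(\lambda)},Q_j^{(\lambda)})$ on each Landau level and from the canonical commutation relations. A secondary but essential point is the rigorous justification of $\Op_\eps(H(\lambda))\leftrightarrow -\eps^2\Delta_M$: since $H(\lambda)\notin\mathcal A_0$, one first works with $\chi(-\eps^2\Delta_M)u^\eps$ for $\chi\in C_c^\infty(\R)$, applies the preceding analysis to the cutoff family, and then passes $\chi\nearrow 1$ using the uniform $\eps$-oscillation of the solution. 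This cutoff step is also where the distinction between the two parts of the statement becomes manifest, since the Plancherel measure ignores $\mathfrak v^*$ (Remark~\ref{rem:Ghat}).
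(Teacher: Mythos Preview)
Your overall strategy matches the paper's, but there are two genuine gaps.

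\textbf{Continuity in $t$.} Your claim that continuity ``follows from the block-diagonal form and the uniform bound on $\|\Gamma_t d\gamma_t\|_{\mathcal M}$'' is not correct: the map $t\mapsto\Gamma_t d\gamma_t$ is \emph{a priori} only an $L^\infty$ object (Proposition~\ref{prop:semiclas}), and a uniform bound in $\mathcal M_{ov}$ gives no continuity. The paper instead proves equicontinuity of the pre-limit maps $t\mapsto(\Op_\eps(\sigma^{(u,n)})\psi^\eps(t),\psi^\eps(t))$ uniformly in $\eps$ (Lemma~\ref{lem:equicontinuity}) and then invokes Arzel\`a--Ascoli. This is not a cosmetic point: without it the transport equation~\eqref{transport} is only an a.e.\ statement and cannot be integrated along trajectories as done in Section~\ref{sec:proof}. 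Relatedly, $\Pi_n^{(\lambda)}\sigma\Pi_n^{(\lambda)}\notin\mathcal A_0$, so the paper regularizes at the \emph{symbol} level via $\chi(uH(\lambda))\Pi_n^{(\lambda)}\sigma\Pi_n^{(\lambda)}$; your cutoff $\chi_R(-\eps^2\Delta_M)$ acts on the state, which does not address this symbol-class issue.

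\textbf{The subprincipal term does not reduce directly.} You write that for $\sigma$ commuting with $H(\lambda)$, ``the subprincipal symbol of $\tfrac{1}{2\eps^2}[\Op_\eps(\sigma),\Op_\eps(H)]$ reduces, on the block $\mathcal V_n^{(\lambda)}$, to $(n+d/2)\mathcal Z^{(\lambda)}\sigma$.'' This is not what happens. The $\eps^{-1}$ term in the commutator expansion is $V\cdot\pi^\lambda(V)\sigma$, which for block-diagonal $\sigma$ is purely \emph{off}-diagonal (it maps $\mathcal V_n^{(\lambda)}$ to $\mathcal V_{n\pm1}^{(\lambda)}$); paired against the block-diagonal $\Gamma_t$ it would give $0$, not the transport. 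The paper's mechanism is a two-step normal form: write $V\cdot\pi^\lambda(V)\sigma^{(u,n)}=[\sigma_1,H(\lambda)]$ (relation~\eqref{e:offdiag5}), feed $\sigma_1$ back into the time-derivative identity~\eqref{eq:d/dt}, and use the algebraic identity $\Pi_n\bigl(V\cdot\pi^\lambda(V)\sigma_1\bigr)\Pi_n=\bigl((n+\tfrac d2)i\mathcal Z^{(\lambda)}-\tfrac12\Delta_M\bigr)\sigma^{(u,n)}$; the $\Delta_M$ piece then cancels against the order-$1$ term of~\eqref{eq:d/dt}, leaving exactly $(n+\tfrac d2)\mathcal Z^{(\lambda)}\sigma$. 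This iteration is the substance of the ``technical heart'' you allude to, and it is also what simultaneously yields the equicontinuity estimate~\eqref{derivation}. Your sketch of Part~(2) is fine, though note that the paper phrases it by observing that the off-diagonal nature of $V\cdot\pi(V)\sigma$ kills the $\mathfrak z^*$-contribution in the limit, so only the $\omega\cdot V$ term on $\mathfrak v^*$ survives.
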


The proof of this proposition follows ideas from~\cite{FF} that we adapt to our situation. We give some elements on the proof of this Proposition in Appendix~\ref{app:semiclas}, in particular we explain the continuity of the map $t\mapsto \Gamma_t d\gamma_t$.

\medskip

We have now  all the tools that we shall use for proving Theorem~\ref{t:main} in the next two sections.


 \section{Proof of the sufficiency of the geometric conditions} \label{s:proofsufficiency}

We prove here the first part of Theorem~\ref{t:main}, that if $U$ satisfies condition {\bf (A)}, $T_{\rm GCC}(U) <+\infty$ and $T>T_{\rm GCC}(U)$, then the Schr\"odinger equation~\eqref{e:Schrod} is observable on $U$ in time $T$.

\subsection{Proof of localized observability.}\label{sec:proof}

We argue by contradiction. If~\eqref{obs_loc} is false, then there exist  $(u_0^k)_{k\in\N}$ and $(h_k)_{k\in\N}$ such that $u_0^k= \mathcal{P}_{h_k} u_0^k$,
\begin{equation}\label{hyp}
\| u^k_0\|_{L^2(M)}=1\;\;\mbox{and}\;\;  \int_0^T \left\|   {\rm e}^{it(\frac12 \Delta_M+\mathbb{V})} \mathcal{P}_{h_k} u_0^k\right\|_{L^2(U)}^2 dt\Tend{k}{ +\infty} 0.
\end{equation}
Because $u_0^k= \mathcal{P}_{h_k} u_0^k$ with $\chi$ compactly supported in an annulus (see~\eqref{def:Pih}) and $\mathbb{V}$ is bounded, the family $u_0^k$ is 
$h_k$-oscillating in the sense of Section \ref{s:epsosc}
and so it is for
$$\psi_k(t)=   {\rm e}^{it(\frac12 \Delta_M+\mathbb{V})} \mathcal{P}_{h_k} u_0^k.$$
We consider (after extraction of a subsequence if necessary), the semi-classical measure $\Gamma_t d\gamma_t$ of $\psi_k(t)$ given by Proposition~ \ref{prop:semiclas} and satisfying the properties listed in Proposition~\ref{p:measure0}.

\begin{proposition}\label{p:pmeasure}
We have the following facts:
\begin{enumerate}
\item
There holds
\begin{equation} \label{e:nullityoverU}
\int_0^T \int_{U\times \widehat G} {\rm Tr} (\Gamma_t(x,\lambda)) d\gamma_t (x,\lambda)dt=0.
\end{equation}
\item $\gamma_t$ is supported above $\mathfrak z^*\setminus \{0\}$ for almost every $t\in\R$. 
\end{enumerate}
\end{proposition}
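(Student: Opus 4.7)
For \textbf{part (1)}, I would apply Proposition \ref{prop:eops} after checking uniform $h_k$-oscillation. Indeed, $\mathcal P_{h_k}$ spectrally localizes $\tfrac12 \Delta_M + \mathbb V$ in an annulus of size $1/h_k^2$ (with $\mathbb V$ bounded), and this localization is preserved by the Schr\"odinger flow. Proposition \ref{prop:eops} applied with $\theta = \mathbf{1}_{[0,T]}$ and $\phi \in C^\infty(M,[0,1])$ supported in $U$ gives that the limit $\int_0^T \int_{M\times\widehat G} \phi(x) \mathrm{Tr}(\Gamma_t(x,\lambda)) d\gamma_t(x,\lambda) dt$ is bounded by $\limsup_k \int_0^T \|\psi_k(t)\|_{L^2(U)}^2 dt$, which vanishes by \eqref{hyp}. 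Taking an increasing sequence $\phi_n \nearrow \mathbf{1}_U$ (possible since $U$ is open) and monotone convergence then yield \eqref{e:nullityoverU}.

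For \textbf{part (2)}, three ingredients combine. First, since the representations $\pi^{(0,\omega)}$ are one-dimensional, $\Gamma_t d\gamma_t$ restricts to a positive scalar measure on $M\times\mathfrak v^*$. Second, the semiclassical functional calculus implies that the principal symbol of $\mathcal P_{h_k}$ at $\pi^{(0,\omega)}$, after rescaling $\omega \mapsto h_k \omega$, is $\chi(|\omega|^2/2)$ (since the symbol of $-\Delta_M$ at $\pi^{(0,\omega)}$ equals $|\omega|^2$); hence $\gamma_t|_{M\times\mathfrak v^*}$ is supported in $M\times\{1\le |\omega|\le 2\}$, and in particular puts no mass at $\omega = 0$. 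Third, by Proposition \ref{p:measure0}(2), $\gamma_t|_{M\times\mathfrak v^*}$ is invariant under the flow $\Xi^s$ for every $s\in\R$. Now part (1) forces $\gamma_t$ to vanish on $U\times\mathfrak v^*$ for almost every $t\in(0,T)$; Assumption (A), after rescaling $s\mapsto s/|\omega|$ to treat all $\omega\neq 0$ uniformly, asserts that every $\Xi^s$-orbit through a point of $M\times(\mathfrak v^*\setminus\{0\})$ meets $U\times\mathfrak v^*$. Since the support of $\gamma_t|_{M\times\mathfrak v^*}$ is compact, a finite covering by preimages $\Xi^{-s_n}(V_n)$ with each $V_n\subset U\times\mathfrak v^*$ open, combined with $\Xi^s$-invariance, forces $\gamma_t|_{M\times\mathfrak v^*} = 0$ for a.e. $t\in(0,T)$. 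The extension to almost every $t\in\R$ follows from Remark \ref{r:masscons}, applied to time-translated data in order to cover $\R$ by countably many intervals of length $T$.

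\textbf{Main difficulty.} The delicate step is the covering argument in part (2): one must upgrade the \emph{pointwise} statement (A) into a \emph{null-measure} statement compatible with $\Xi^s$-invariance. The time rescaling $s\mapsto s/|\omega|$ handles all $\omega\neq 0$ uniformly, and compactness of the support of $\gamma_t|_{M\times\mathfrak v^*}$ reduces the task to a finite union of flow preimages of open subsets of $U\times\mathfrak v^*$.
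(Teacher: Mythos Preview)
Your approach is correct and matches the paper's. For part~(1) you use Proposition~\ref{prop:eops} plus approximation of $\mathbf 1_U$ from below by smooth functions, exactly as the paper does (the paper spells out an explicit cutoff $\alpha(R^{-1}H(\lambda))$ where you invoke $\eps$-oscillation directly, but this is the same mechanism). For part~(2) your argument via $\Xi^s$-invariance, Assumption~(A), and a compactness/covering step is precisely the content the paper compresses into a single line; your additional observation that spectral localization confines the $\mathfrak v^*$-part of $\gamma_t$ to $\{1\le|\omega|^2\le 4\}$, and in particular away from $\omega=0$, is a necessary detail (since the orbit at $\omega=0$ is a fixed point and Assumption~(A) says nothing there) that the paper leaves implicit.

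One small correction: your last sentence, extending from a.e.\ $t\in(0,T)$ to a.e.\ $t\in\R$ via Remark~\ref{r:masscons}, does not work as stated. That remark concerns the validity of the inequality \eqref{obs_loc} on translated time intervals, but here you are inside a contradiction argument where \eqref{obs_loc} is assumed to \emph{fail}; hypothesis \eqref{hyp} gives vanishing of $\int\|\psi_k\|_{L^2(U)}^2$ only on $(0,T)$, and time-translating the initial data yields a different sequence for which you have no such information. Since Proposition~\ref{p:measure0} asserts no transport in $t$ for the $\mathfrak v^*$-part, there is no mechanism to propagate. This does not matter for the paper: part~(2) is only ever used for $t\in(0,T)$ in the subsequent argument, and the phrase ``almost every $t\in\R$'' in the statement is a harmless overreach.
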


\begin{proof}[Proof of Proposition \ref{p:pmeasure}]
To prove (1), let us recall that for $\theta\in L^1(\R)$ and $\sigma\in{\mathcal A}_0$,
\begin{equation} \label{e:convmeasure}
\int_{\R}\theta(t)(\text{Op}_{h_k}(\sigma)\psi_k(t),\psi_k(t))_{L^2(M)}dt\Tend{k}{ +\infty} \int_{\R\times M\times\widehat{G}}\theta(t)\text{Tr}(\sigma(x,\lambda)\Gamma_t(x,\lambda))d\gamma_t(x,\lambda)dt.
\end{equation}
We take $\varphi_j(x)$ a sequence of smooth non-negative functions converging to $\mathbf{1}_{U}(x)$, bounded above by $1$ and such that $\text{supp}(\varphi_j)\subset U$, and $\alpha\in C_c^\infty((-1,1))$ non-negative with $\alpha=1$ in a neighborhood of $0$.
Since $\psi_k(t)$ is uniformly $\eps$-oscillating for $\eps=h_k$, we have
\begin{align*}
\int_0^T \int _{\R\times M\times\widehat{G}}& \text{Tr}(\varphi_j(x)\Gamma_t(x,\lambda))d\gamma_t(x,\lambda)dt=\\
&\lim_{R\rightarrow +\infty} \lim_{k\rightarrow +\infty}
\int_0^T \left( \text{Op}_{h_k}( \varphi_j(x)  \alpha (R^{-1} H(\lambda)) ) \psi_k(t),\psi_k(t)\right)_{L^2(M)} dt.
\end{align*}
Besides, $\text{Op}_{h_k}( \varphi_j(x)  \alpha (R^{-1} H(\lambda)))= \varphi_j(x) \alpha(-h_k^2R^{-1} \Delta_M)$, thus
$$\| \text{Op}_{h_k}( \varphi_j(x)  \alpha (R^{-1} H(\lambda)))\|_{\mathcal L(L^2(M))}\leq 1$$
and
$$\left| \int_0^T  \left( \text{Op}_{h_k}( \varphi_j(x)  \alpha (R^{-1} H(\lambda)) ) \psi_k(t),\psi_k(t)\right)_{L^2(M)}\right| \leq
\int_0^T \|\psi_k(t)\| ^2_{L^2(U)} dt.$$
We deduce from \eqref{hyp} that
$$\int_0^T \int _{\R\times M\times\widehat{G}}\text{Tr}(\varphi_j(x)\Gamma_t(x,\lambda))d\gamma_t(x,\lambda)dt=0.$$
Taking the limit $j\rightarrow +\infty$ and using Lebesgue's dominated convergence theorem  (since $\Gamma_td\gamma_t\geq 0$), we get~\eqref{e:nullityoverU}.

\medskip

Point (2) 
follows from Point (1), the positivity of $\Gamma_td\gamma_t$,  Assumption (A) and Point (2) of Proposition \ref{p:measure0}.
\end{proof}

\medskip

Set
$$\gamma_{n,t} (x,\lambda)= {\rm Tr}\left(\Gamma_{n,t}(x,\lambda)\right) \gamma_t(x,\lambda).$$
We have obtained
\begin{align*}
0& =\sum_{n\in\N} \int_0^T \int_{U\times \widehat G} {\rm Tr} (\Gamma_{n,t}(x,\lambda)) d\gamma_t (x,\lambda)dt=\sum_{n\in\N} \int_0^T \int_{U\times \widehat G} d\gamma_{n,t} (x,\lambda)dt
\end{align*}
whence, the positivity of $\Gamma_t$ (and thus of $\gamma_{n,t}$) yields
$$ \int_{U\times \mathfrak z^*} d\gamma_{n,t} (x,\lambda)=0,\;\; \text{for almost every } t\in [0,T],\;\; \forall n\in\N,$$
where we have also used that the support of $d\gamma_{n,t}$ is above $\mathfrak z^*\setminus \{0\}$.

\medskip

We now use transport equation~\eqref{transport}.
For $n\in\N$ and $\lambda\in\mathfrak z^*\setminus \{0\}$, we set
$$Z_n(\lambda)= (n+\frac d 2)  {\mathcal Z}^{(\lambda)}$$
and we have
$$| Z_n(\lambda)|= n+\frac{d}{2}.$$
We
introduce the map $\Phi^s_n$ defined for $s\in\R$ and $n\in\N$ as an application from $M\times (\mathfrak z^*\setminus\{0\})$ to itself by
$$\Phi^s_n: (x,\lambda)\mapsto \left({\rm Exp}[s Z_n(\lambda)]x,\lambda\right).$$
The flows $\Phi_n^s$ and $\Phi_0^s$ are related by 
$$
\Phi_n^s(x,\lambda)=\Phi_0^{s'}(x,\lambda), \qquad s'=\left(\frac{2n}{d}+1\right)s.
$$

The transport equation \eqref{transport} implies  that for any interval $I$ and any $ \Lambda\subset M\times (\mathfrak z^*\setminus\{0\})$,
$$\frac d {ds} \left(\int _{(I+s)\times \Phi^{s}_n ( \Lambda)} d\gamma_{n,t} dt\right) =0,$$
which means
\begin{equation} \label{e:transportgamman}
\int _{(I+s)\times \Phi^{s}_n ( \Lambda)} d\gamma_{n,t} dt= \int _{I\times \Lambda} d\gamma_{n,t} dt.
\end{equation}

Since $T>T_{\rm GCC} (U)$, we may choose $T'$ such that $T_{\rm GCC} (U)<T'<T$ and (H-GCC) holds in time $T'$. Assume that there exists $\tau$ with $0<\tau<T-T'$ such that
\begin{equation} \label{e:seekcontrad}
\int_0^{\tau} \int_{M\times \mathfrak z^*} d\gamma_{t} dt >0.
\end{equation}
We seek for a contradiction.

\medskip 
Writing $\gamma_t=\sum_{n=0}^\infty \gamma_{n,t}$, with all $\gamma_{n,t}$ being non-negative Radon measures on $M\times (\mathfrak z^*\setminus \{0\})$ (since Point 2 of Proposition \ref{p:pmeasure} ensures that it has no mass on the trivial representation), we see that there exists $n_0\in\N$ and a bounded open subset $\Lambda\subset M\times (\mathfrak z^*\setminus\{0\})$ such that
$$\int_0^{\tau} \int_{\Lambda} d\gamma_{n_0,t} dt >0.$$
Fix $(x,\lambda)\in \Lambda$ and $s\in (0,T')$ such that  $\Phi^{s}_0 ((x,\lambda))\in U\times \mathfrak z^*$. Note that, making $\Lambda$ smaller if necessary, by continuity of the flow and using that $U$ is open,  $\Phi^{s}_0 ((x',\lambda'))\in U\times \mathfrak z^*$ for any $(x',\lambda')\in\Lambda$. Therefore $\Phi^{s(n_0)}_{n_0} ((x',\lambda'))\in U\times \mathfrak z^*$ for any $(x',\lambda')\in\Lambda$, where $s(n_0)=\frac{sd}{2n_0+d}$ (with a slight abuse of notation).

\medskip
From \eqref{e:nullityoverU}, we get
$$\gamma_{n_0,t} (\Phi^{s(n_0)}_{n_0} (\Lambda))=0,\;\;a.e.\; t\in(0,T),$$
and in particular
$$\int_{s(n_0)}^T \int_{ \Phi^{s(n_0)}_{n_0}(\Lambda)} d\gamma_{n_0,t} dt =0.$$
Therefore, by \eqref{e:transportgamman},
$$\int_0^{T-s(n_0)} \int_{\Lambda} d\gamma_{n_0,t} dt =0.$$
Since $\tau<T-T'<T-s(n_0)$, we get
$$\int_0^{\tau} \int_{\Lambda} d\gamma_{n_0,t} dt =0$$
which is a contradiction.
Therefore
\begin{equation*}
\int_0^{\tau} \int_{M\times \mathfrak z^*} d\gamma_{t} dt =0.
\end{equation*}
This implies $\gamma_t=0$ for almost every $t\in (0,\tau)$. In turn, this contradicts the fact that $\| \psi_k(t)\|_{L^2}=1$. Therefore \eqref{obs_loc} holds.

\begin{remark} \label{r:assumpA}
Assumption (A) corresponds to the usual Geometric Control Condition which is known to be a sufficient condition for the control/observation of the Riemannian Schr\"odinger equation (see \cite{Leb}). It is well known that, in the Riemannian setting, this condition is  not always necessary : it is not for  the Euclidean torus (see~\cite{J,AM14,BZ}) while it is for Zoll manifolds~\cite{Mac11} (these manifolds have geodesics that are all periodic); so, it depends on the manifold. As already mentioned in the introduction, the authors tend to think that in the particular case considered in this paper (quotients of H-type groups), Theorem \ref{t:main} still holds without this assumption. Assumption (A) has been used in the proof of Point (2) of Proposition \ref{p:pmeasure}, and it is the only place of the paper where we use it. By analogy with the results of~\cite{AM14,AFM15,BS}, it is likely that as in~\cite[Section 7]{BS}, a key argument should be a  reduction to a problem on the Euclidian torus, as those studied in~\cite{AFM15}  for example. Then, the semiclassical analysis of this reduced problem would show that the part of the measure $\gamma_t$ located above $M\times {\mathfrak v}^*$ vanishes. That would prove that H-type GCC alone is enough and would avoid the use of Assumption (A). 
\end{remark}

\subsection{Proof of weak observability}\label{s:weakobs}

We prove here {\bf \eqref{obs_loc} $ \Longrightarrow$ \eqref{obs_weak}}.

\medskip

Consider a partition of unity over the positive real half-line $\R^+$:
\begin{equation} \label{e:partition}
\forall x\in\R^+, \ \ 1=\chi_0(x)^2+\sum_{j=1}^\infty \chi_j(x)^2
\end{equation}
where, for $j\geq1$, $\chi_j(x)=\chi(2^{-j}x)$ with $\chi\in C_c^\infty((1/2,2),[0,1])$. To construct such a partition of unity, consider $\psi\in C_c^\infty((-2,2),[0,1])$ such that $\psi\equiv 1$ on a neighborhood of $[-1,1]$, and set $\chi(x)=\sqrt{\psi(x)-\psi(2x)}$ for $x\geq 0$, which is smooth for well-chosen $\psi$. Finally, define $\chi_0(x)$ for $x\geq 0$ by $\chi_0(x)^2=1-\sum_{j=1}^\infty \chi_j(x)^2$, so that $\chi_0(x)=0$ for $x\geq 2$. Then \eqref{e:partition} holds.

\medskip

We follow the proof of \cite[Proposition 4.1]{BZ}. Set $h_j=2^{\frac{-j}{2}}$ for $j\geq 1$, and note that $\mathcal{P}_{h_j}=\chi_j(-(\frac12\Delta_M+\mathbb{V}))$. We choose $K$ so that $h_K\leq h_0$, where $h_0$ is taken so that \eqref{obs_loc} holds for $0<h\leq h_0$.  We take $\varepsilon>0$ such that $T'+2\varepsilon<T$ and $\psi\in C_c^\infty((0,T),[0,1])$ with $\psi=1$ on a neighborhood of $[\varepsilon,T'+2\varepsilon]$. Then
\begin{align}
\|u_0\|_{L^2(M)}^2&=\sum_{j=0}^\infty \left\|\chi_j\left(-\frac12\Delta_M+\mathbb{V}\right)u_0\right\|^2_{L^2(M)} \nonumber\\
&= \sum_{j=0}^K \|\mathcal{P}_{h_j}u_0\|^2_{L^2(M)}+\sum_{j=K+1}^\infty \|\mathcal{P}_{h_j}u_0\|^2_{L^2(M)}\nonumber \\
&\leq C \left\|\left(\Id-(\frac12\Delta_M+\mathbb{V})\right)^{-1}u_0\right\|^2_{L^2(M)}+\sum_{j=K+1}^\infty \|\mathcal{P}_{h_j}u_0\|^2_{L^2(M)}\nonumber \\
&\leq C\|(\Id-\Delta_M)^{-1}u_0\|^2_{L^2(M)}+C\sum_{j=K+1}^\infty  \left\| \psi(t) {\rm e}^{it(\frac12 \Delta_M+\mathbb{V})}\mathcal{P}_{h_j} u_0\right\|^2_{L^2((0,T)\times U)} \nonumber
\end{align}
where in the third line we bounded above the low frequencies with a constant $C=C_K$, and in the last line we used \eqref{obs_loc} (with the term on $U$ being integrated for $t\in (\varepsilon,T'+2\varepsilon)$, which is of length $>T'$, see Remark \ref{r:masscons}). Note that we also used the fact that $\mathbb{V}$ is analytic and thus bounded, and therefore the resolvents of the operators $\frac12\Delta_M+\mathbb{V}$ and $\Delta_M$ are comparable in $L^2$ norm.
Using equation \eqref{e:Schrod}, we may change $\mathcal{P}_{h_j}=\chi_j(-(\frac12\Delta_M+\mathbb{V}))$ into $\chi_j(-D_t)$ where $D_t=\partial_t/i$. We get
\begin{align}
\|u_0\|_{L^2(M)}^2&\leq C\|(\Id-\Delta_M)^{-1}u_0\|^2_{L^2(M)}+C\sum_{j=K+1}^\infty  \left\|\psi(t) \chi_j(-D_t)  {\rm e}^{it(\frac12 \Delta_M+\mathbb{V})} u_0\right\|^2_{L^2((0,T)\times U)}
\end{align}
If $\widetilde{\psi}\in C_c^\infty((0,T),[0,1])$ satisfies $\widetilde{\psi}=1$ on $\text{supp}(\psi)$, we note that
\begin{align}
\psi(t)\chi_j(-D_t)&=\psi(t)\chi_j(-D(t))\widetilde{\psi}(t)+\psi(t)[\widetilde{\psi}(t),\chi_j(-D_t)] \nonumber \\
&=\psi(t)\chi_j(-D(t))\widetilde{\psi}(t)+E_j(t,D_t)
\end{align}
where $E_j$ is smoothing, i.e.,
\begin{equation*}
\partial^\alpha E_j=O(\langle t\rangle^{-N}\langle\tau\rangle^{-N}2^{-Nj})
\end{equation*}
for any $\alpha\in\N$, any $N\in\N$ and uniformly in $j$. This fact follows from symbolic calculus and the remark that, on the support of $\psi$, $\widetilde{\psi}$ is constant and 
all the derivatives of $\widetilde \psi$ are zero on the support of $\psi$. 

\medskip

Therefore, integrating by parts in the time variable in the second term of the right-hand side and absorbing the error terms $E_j(t,D_t)$ in $\|(\Id-\Delta_M)^{-1}u_0\|^2_{L^2}$, we get
\begin{align}
\|&u_0\|_{L^2(M)}^2\leq C\|(\Id-\Delta_M)^{-1}u_0\|^2_{L^2(M)}+C\sum_{j=K+1}^\infty  \|\psi(t)\chi_j(-D_t)\widetilde{\psi}(t)  {\rm e}^{it(\frac12 \Delta_M+\mathbb{V})} u_0\|_{L^2((0,T)\times U)}^2 \nonumber \\
&\leq C\|(\Id-\Delta_M)^{-1}u_0\|^2_{L^2(M)}+C\sum_{j=K+1}^\infty  \|\chi_j(-D_t)\widetilde{\psi}(t)  {\rm e}^{it(\frac12 \Delta_M+\mathbb{V})} u_0\|_{L^2((0,T)\times U)}^2 \nonumber \\
&= C\|(\Id-\Delta_M)^{-1}u_0\|^2_{L^2(M)}
+C\sum_{j=K+1}^\infty  \left(\chi_j(-D_t)^2\widetilde{\psi}(t)  {\rm e}^{it(\frac12 \Delta_M+\mathbb{V})} u_0\,,\,\widetilde{\psi}(t) {\rm e}^{it(\frac12 \Delta_M+\mathbb{V})} u_0\right)_{L^2((0,T)\times U)} \nonumber \\
&\leq C\|(\Id-\Delta_M)^{-1}u_0\|^2_{L^2(M)}
+C \left(\sum_{j=0}^\infty \chi_j(-D_t)^2 \widetilde{\psi}(t) {\rm e}^{it(\frac12 \Delta_M+\mathbb{V})} u_0\,,\, \widetilde{\psi}(t) {\rm e}^{it(\frac12 \Delta_M+\mathbb{V})} u_0\right)_{L^2((0,T)\times U)} \nonumber \\
&\leq  C\|(\Id-\Delta_M)^{-1}u_0\|^2_{L^2(M)}+C\| {\rm e}^{it(\frac12 \Delta_M+\mathbb{V})} u_0\|^2_{L^2((0,T)\times U)} \nonumber
\end{align}
where we used \eqref{e:partition} in the last line. This concludes the proof of $ \eqref{obs_weak}$.

\medskip

\subsection{Proof of observability} \label{s:fromweaktostrongobs}

We prove here \eqref{obs_weak} $ \Longrightarrow$ \eqref{obs}, which concludes the proof of the sufficiency of the geometric condition {\bf H-type GCC}. We follow the classical Bardos-Lebeau-Rauch argument, see for example \cite{BZ}.

\medskip

For $\delta\geq 0$, we set
\begin{equation*}
\mathcal{N}_\delta=\{u_0\in L^2(M) \ \mid \ e^{it(\frac{1}{2}\Delta_M+\mathbb{V})}u_0\equiv0 \text{  on } (0,T-\delta)\times U\}.
\end{equation*}
\begin{lemma} \label{l:blrlemma}
There holds $\mathcal{N}_0=\{0\}$.
\end{lemma}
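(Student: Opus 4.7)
The plan is to follow the classical Bardos-Lebeau-Rauch strategy (cf.~\cite{BZ}) in three steps: finite-dimensionality of $\mathcal{N}_0$, $A$-invariance, and unique continuation for an eigenvector.

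First, I would show that $\mathcal{N}_0$ is finite-dimensional. For $u_0 \in \mathcal{N}_0$, the observation term in the weak observability inequality \eqref{obs_weak} vanishes identically, leaving
\begin{equation*}
\|u_0\|_{L^2(M)}^2 \leq C_1 \|(\Id - \Delta_M)^{-1}u_0\|_{L^2(M)}^2.
\end{equation*}
Since $M$ is a compact nilmanifold, the sub-Laplacian has compact resolvent on $L^2(M)$, so $(\Id - \Delta_M)^{-1}$ is a compact operator; the inequality above then forces the unit ball of the closed subspace $\mathcal{N}_0$ to be relatively compact in $L^2(M)$, whence $\dim \mathcal{N}_0 < \infty$.

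Second, I would show that $\mathcal{N}_0$ is invariant under $A := \frac{1}{2}\Delta_M + \mathbb{V}$. Since $A$ is self-adjoint with compact resolvent, its spectrum is a discrete real set $\{\mu_k\}$ with finite-dimensional eigenspaces $E_k$, and $L^2(M) = \bigoplus_k E_k$. Combining this discreteness with the finite-dimensionality of $\mathcal{N}_0$ and the strong continuity of the unitary group $e^{itA}$, a standard argument (cf.~\cite{BZ}) shows that $A$ restricts to a self-adjoint operator on $\mathcal{N}_0$. Diagonalizing, we obtain an eigenvector $\phi \in \mathcal{N}_0 \setminus \{0\}$ with $A\phi = \mu\phi$ for some $\mu \in \R$. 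The eigenvector then satisfies $e^{itA}\phi = e^{it\mu}\phi$, which by membership in $\mathcal{N}_0$ vanishes on $U$ for all $t \in (0,T)$; hence $\phi \equiv 0$ on the open set $U$. Since $\mathbb{V}$ is analytic, the operator $A - \mu$ has analytic coefficients, and Bony's unique continuation theorem (\cite{Bon69}, or its refinement \cite{Laur}) yields $\phi \equiv 0$ on $M$, contradicting $\phi \neq 0$. Therefore $\mathcal{N}_0 = \{0\}$.

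The main obstacle will be the second step: the naive attempt to show $e^{isA}\mathcal{N}_0 \subset \mathcal{N}_0$ fails, since for $u_0 \in \mathcal{N}_0$ one only obtains that $e^{isA}u_0$ vanishes on the \emph{shifted} time interval $(-s, T-s) \times U$ rather than on $(0,T) \times U$. One must therefore exploit in an essential way both the finite-dimensionality obtained in Step 1 and the discrete spectrum of $A$ on the compact manifold $M$ to produce an honest eigenvector inside $\mathcal{N}_0$.
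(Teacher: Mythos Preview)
Your outline is the same Bardos--Lebeau--Rauch strategy as the paper, and your steps 1 and 3 match the paper exactly (compact resolvent of $\Delta_M$ plus weak observability for finite-dimensionality; Bony's unique continuation for the eigenvector). The only point you leave unspecified is the $A$-invariance of $\mathcal N_0$, which you correctly flag as the obstacle.

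The paper resolves it by the difference quotient argument from \cite{BZ}, not by an argument that takes finite-dimensionality of $\mathcal N_0$ as input. For $u_0\in\mathcal N_0$ set $v_{\epsilon,0}=\epsilon^{-1}(e^{i\epsilon A}-\Id)u_0$; since $e^{itA}v_{\epsilon,0}$ vanishes on $(0,T-\delta)\times U$ whenever $\epsilon\le\delta$, one may apply \eqref{obs_weak} at a slightly smaller time $T'\in(T_{\rm GCC}(U),T)$ (where weak observability still holds) to get a Cauchy estimate $\|v_{\alpha,0}-v_{\beta,0}\|_{L^2}\le C|\alpha-\beta|$, using the spectral decomposition of $A$. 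The limit $v_0$ lies in $\bigcap_{\delta>0}\mathcal N_\delta=\mathcal N_0$ and equals $iAu_0$, so $A:\mathcal N_0\to\mathcal N_0$. Thus the paper in fact proves $A$-invariance \emph{before} finite-dimensionality, and the ``discreteness of the spectrum plus finite-dimensionality of $\mathcal N_0$'' that you invoke are not needed for this step. Your suggested alternative route (exploiting finite-dimensionality first) can also be made to work---for small $\delta$ the nested finite-dimensional spaces $\mathcal N_\delta$ must stabilise at $\mathcal N_0$, whence $e^{isA}\mathcal N_0\subset\mathcal N_s=\mathcal N_0$ for small $s>0$, and unitarity plus finite dimension upgrade this to equality---but this is not what is in \cite{BZ} or in the paper.
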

\begin{proof}
Let $u_0\in \mathcal{N}_0$. We define
\begin{equation} \label{e:defveps}
v_{\epsilon,0}=\frac{1}{\epsilon}\left(e^{i\epsilon(\frac{1}{2}\Delta_M+\mathbb{V})}-\Id\right)u_0.
\end{equation}
If $\epsilon\leq \delta$, then $e^{it(\frac{1}{2}\Delta_M+\mathbb{V})}v_{\epsilon,0}=0$ on $(0,T-\delta)\times U$. We write $u_0$ in terms of orthonormal eigenvectors~$f_\lambda$ of $\frac12\Delta_M+\mathbb{V}$ (associated with $\lambda\in \text{Sp}$, the spectrum of $\frac12\Delta_M+\mathbb{V}$):
\begin{equation*}
u_0=\sum_{\lambda\in\text{Sp}} u_{0,\lambda}f_\lambda
\end{equation*}
For small enough $\alpha,\beta$, applying \eqref{obs_weak} with a slightly smaller $T$, we have
\begin{align}
\|v_{\alpha,0}-v_{\beta,0}\|^2_{L^2}&\leq   C \|(\Id-\Delta_M)^{-1}(v_{\alpha,0}-v_{\beta,0})\|^2_{L^2} \nonumber \\
&\leq C \|(\Id-(\frac12\Delta_M+\mathbb{V}))^{-1}(v_{\alpha,0}-v_{\beta,0})\|^2_{L^2} \nonumber \\
&\leq C\sum_{\lambda\in\text{Sp}}\left|\frac{e^{i\alpha\lambda}-1}{\alpha}-\frac{e^{i\beta\lambda}-1}{\beta}\right|^2(1+\lambda)^{-2}|u_{0,\lambda}|^2\nonumber \\
&\leq C\sum_{\lambda\in\text{Sp}}\lambda^2|\alpha-\beta|^2(1+\lambda)^{-2}|u_{0,\lambda}|^2\nonumber \\
&\leq C|\alpha-\beta|^2. \nonumber
\end{align}
Hence there exists $v_0\in L^2(M)$ such that $v_0=\lim_{\alpha\rightarrow 0}v_{\alpha,0}$ where the limit is taken in $L^2(M)$. This limit is necessarily in $\mathcal{N}_\delta$ for all $\delta>0$, hence in $\mathcal{N}_0$. Moreover, thanks to \eqref{e:defveps}, there holds in the sense of distributions
\begin{equation*}
e^{it(\frac{1}{2}\Delta_M+\mathbb{V})}v_0=\partial_te^{it(\frac{1}{2}\Delta_M+\mathbb{V})}u_0
\end{equation*}
and therefore
\begin{equation*}
v_0=i(\frac{1}{2}\Delta_M+\mathbb{V}) u_0.
\end{equation*}
Therefore $\frac{1}{2}\Delta_M+\mathbb{V}:\mathcal{N}_0\rightarrow \mathcal{N}_0$ is a well-defined operator. Moreover, according to \eqref{obs_weak}, on $\mathcal{N}_0$, we have
\begin{equation*}
\|(\Id-\Delta_M)\cdot\|_{L^2(M)}\leq C\|\cdot\|_{L^2(M)}
\end{equation*}
and, by compact embedding (see Lemma \ref{l:compact} below), the unit ball of $\mathcal{N}_0\subset L^2(M)$ is compact. Hence $\mathcal{N}_0$ is finite dimensional and there exists an eigenfunction $w\in\mathcal{N}_0$ of $\frac12\Delta_M+\mathbb{V}:\mathcal{N}_0\rightarrow \mathcal{N}_0$, i.e.,
\begin{equation*}
(\frac12\Delta_M+\mathbb{V})w=\mu w, \ \ \ w_{|U}=0.
\end{equation*}
By a standard unique continuation principle (see \cite{Bon69} and \cite[Theorem 1.12]{Laur}), since $\mathbb{V}$ and $\Delta_M$ are analytic (see \cite[Section 5.10]{BLU} for example), we conclude that $w=0$, hence $\mathcal{N}_0=\{0\}$.
\end{proof}
\begin{remark}\label{rem:analyticity}
To our knowledge, the unique continuation principle used in the above proof is only known when $\mathbb{V}$ is analytic. In $C^\infty$ regularity, counterexamples to the unique continuation principle exist, see \cite{Ba}. However, the result of Theorem~\ref{t:main} holds as soon as a unique continuation principle holds for~$\frac12\Delta_M+\mathbb V$.  
\end{remark}

\begin{lemma} \label{l:compact}
Set
\begin{equation*}
\mathcal{H}(M)=\{u\in L^2(M) \ \mid \ ({\rm Id}-\Delta_M)u \in L^2(M)\}.
\end{equation*}
Then $\mathcal{H}(M)\hookrightarrow L^2(M)$ with compact embedding.
\end{lemma}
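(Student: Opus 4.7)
The plan is to derive the compactness from a subelliptic Sobolev estimate combined with the ordinary Rellich--Kondrachov theorem on the compact manifold $M$.

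First, I would note that by integration by parts on $M$, any $u\in\mathcal{H}(M)$ satisfies
\begin{equation*}
\sum_{j=1}^{2d}\|V_j u\|_{L^2(M)}^2 \,=\, -\langle \Delta_M u,u\rangle_{L^2(M)} \,\leq\, \|u\|_{L^2(M)}\,\|\Delta_M u\|_{L^2(M)}.
\end{equation*}
Thus the full family of ``horizontal'' first derivatives $V_j u$ lies in $L^2(M)$, with norms controlled by the graph norm $\|u\|_{L^2(M)}+\|(\mathrm{Id}-\Delta_M)u\|_{L^2(M)}$.

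Second, since $\mathfrak g$ is step-$2$ stratified, the vector fields $V_1,\dots,V_{2d}$ together with their first brackets span the tangent space of $M$ at each point. By Hörmander's hypoellipticity theorem for sums of squares (in the sharpened form of Rothschild--Stein / Folland--Stein), there is a subelliptic a priori estimate
\begin{equation*}
\|u\|_{H^{1/2}(M)} \,\leq\, C\bigl(\|u\|_{L^2(M)}+\|\Delta_M u\|_{L^2(M)}\bigr), \qquad u\in\mathcal{H}(M),
\end{equation*}
where $H^{1/2}(M)$ denotes the usual (Riemannian) $L^2$-Sobolev space on $M$ and the exponent $1/2=1/k$ reflects the step $k=2$ of $\mathfrak g$. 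Because $M$ is a closed compact manifold, the Rellich embedding $H^{1/2}(M)\hookrightarrow L^2(M)$ is compact, and composing with the above inclusion yields the desired compactness $\mathcal{H}(M)\hookrightarrow L^2(M)$.

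The main obstacle is the invocation of the subelliptic estimate. In the present setting, where $G$ is an H-type group and $\Delta_M$ is built from left-invariant vector fields, a self-contained verification is possible via the operator-valued Fourier transform developed in Section~\ref{sec:semiclas}: on each representation $\pi^\lambda$ with $\lambda\in\mathfrak z^*\setminus\{0\}$ the operator $-\Delta_G$ acts as the harmonic oscillator $H(\lambda)$ whose spectrum $\{|\lambda|(2n+d):n\in\N\}$ provides the required gain of regularity, and the cocompactness of $\widetilde\Gamma$ transfers this to a gain on $M$. Equivalently, one may simply note that $-\Delta_M$ is non-negative self-adjoint on $L^2(M)$ with discrete spectrum tending to $+\infty$ (a classical consequence of Hörmander's theorem on a compact manifold), so that its resolvent $(\mathrm{Id}-\Delta_M)^{-1}$ is compact on $L^2(M)$; this is exactly the content of the lemma.
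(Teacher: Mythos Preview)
Your proof is correct and follows essentially the same strategy as the paper: a subelliptic Sobolev estimate followed by the classical Rellich--Kondrachov theorem on the compact manifold $M$. The paper invokes \cite[Corollary B.1]{Laur} to obtain the slightly sharper bound $\|u\|_{H^1(M)}\leq \|(\mathrm{Id}-\Delta_M)u\|_{L^2(M)}$ (the correct gain from $\Delta_M u\in L^2$ in step $2$ is $2/k=1$, not $1/k=1/2$ as you wrote), but your weaker $H^{1/2}$ estimate is of course already sufficient for the compact embedding.
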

\begin{proof}
By \cite[Corollary B.1]{Laur}, we have $\|u\|_{H^1(M)}\leq \|(\Id-\Delta_M)u\|_{L^2(M)}$ since $G$ is step $2$. Therefore, $\mathcal{H}(M)\hookrightarrow H^1(M)$ continuously. The result then follows by the Rellich-Kondrachov (compact embedding) theorem.
\end{proof}

Assume that ~\eqref{obs} does not hold. Then there exists a sequence $(u_0^k)_{k\in\N}$ such that
\begin{equation} \label{e:contradictionargument}
\|u_0^k\|_{L^2(M)}=1\;\;\mbox{and}\;\; \int_0^T \left\| {\rm e}^{it(\frac12 \Delta_M+\mathbb{V})} u_0^k\right\|^2_{L^2(U)} dt
\Tend{k}{+\infty} 0.
\end{equation}
Since $(u_0^k)_{k\in\N}$ is bounded in $L^2(M)$, we can extract from $(u_0^k)_{k\in\N}$ a subsequence which converges weakly to some $u^\infty$ in $L^2(M)$. By Lemma \ref{l:compact}, we then have $(\Id-\Delta_M)^{-1}u_0^k\rightarrow (\Id-\Delta_M)^{-1}u^\infty$ strongly in $L^2(M)$. Moreover, the second convergence in \eqref{e:contradictionargument} gives $u^\infty\in\mathcal{N}_0$. Thanks to \eqref{obs_weak}, we know that
\begin{equation*}
\|u_0^k\|^2_{L^2(M)} \leq C_1 \int_0^T \left\|  {\rm e}^{it(\frac12 \Delta_M+\mathbb{V})} u_0^k\right\|^2_{L^2(U)} dt
+ C_1 \left\| (\Id-\Delta_M)^{-1} u_0^k\right\|^2_{L^2(M)}.
\end{equation*}
Therefore, taking the limit $k\rightarrow +\infty$, we get
\begin{equation*}
1\leq C_1 \| (\Id-\Delta_M)^{-1} u^\infty\|^2_{L^2(M)}.
\end{equation*}
Therefore $u^\infty\neq 0$, which contradicts Lemma \ref{l:blrlemma} since  $u^\infty\in\mathcal N_0$. Hence, \eqref{obs} holds.



\section{Non-commutative wave packets and the necessity of the geometric control}\label{sec:WP}

In this section, we  conclude the proof of Theorem~\ref{t:main} and prove the necessity of the condition~{\bf (H-GCC)} (for $\overline{U}$). We use special data that we call non-commutative wave packets that we first introduce, together with their properties, on which we also elaborate in Appendix~\ref{a:wpsolutions}. Then, we   conclude to the necessity of the H-type GCC.

\subsection{Non-commutative wave packets} \label{s:WPnoncom}
Let us first briefly recall basic facts about classical (Euclidean) wave packets. Given $(x_0,\xi_0)\in\R^d\times\R^d$ and $a\in\mathcal S(\R^d)$, we consider the family (indexed by $\eps$) of functions
\begin{equation}\label{WPeuclid}
u_{\rm eucl}^\eps(x)= \eps^{-d/4} a\left(\frac{x-x_0}{\sqrt\eps}\right) {\rm e}^{\frac i\eps \xi_0\cdot(x-x_0)}, \;\; x\in\R^d.
\end{equation}
 Such a family is called a (Euclidean) wave packet.

The oscillation along $\xi_0$ is forced by the term ${\rm e}^{\frac i\eps \xi_0\cdot(x-x_0)}$ and the concentration on $x_0$ is performed at the scale $\sqrt\eps$ for symmetry reasons :  the $\eps$-Fourier transform of $u^\eps_{\rm eucl}$,
$\eps^{-d/2}\widehat u^\eps_{\rm eucl} (\xi/\eps)$ presents a concentration on $\xi_0$ at the scale $\sqrt\eps$. The regularity of the wave packets makes them   a flexible tool. Besides, taking $a$ compactly supported in the interior of a fundamental domain  for the torus, one can generalize their definition to the case of the torus by extending them by periodicity.  For example, let us consider the torus $\T^d=\R^d/(2\pi\Z)^d$, we choose $a\in{\mathcal C}_c^\infty((-\pi,\pi)^d)$ and we define $a_\eps(x) $ as
$$a_\eps(x) = a\left(  \frac{x-x_0}{\sqrt\eps}\right).$$
We consider the periodisation operator $\mathbb P$ which associates with a function $\varphi$ compactly supported inside a set of the form $x_0+(-\pi,\pi)^d$ the periodic function defined on the sets $k+x_0+(-\pi,\pi)^d$ for $k\in(2\pi\Z)^d$ by
$\mathbb P \varphi(x)= \varphi(x-k)$. Then, the definition of a wave packet extends to functions on the torus by setting
$$u_{\rm torus}^\eps(x)= \eps^{d/4} \mathbb P a_\eps(x){\rm e}^{\frac i\eps \xi_0\cdot(x-x_0)}.$$

\medskip

We introduce here a generalization of these wave packets to the non-commutative setting of  Lie groups and nilmanifolds, in the context of $H$-type groups, which is strongly inspired by~\cite{FF1}.
For $x\in G$, we write
$$x={\rm Exp}(V+Z)= x_{\mathfrak z} x_{\mathfrak v} = x_{\mathfrak v} x_{\mathfrak z} \;\;{\rm  with}\;\;V\in{\mathfrak v},\;\; Z\in{\mathfrak z},$$
where
$$x_{\mathfrak z} ={\rm e}^Z\in G_{\mathfrak z}:={\rm Exp} (\mathfrak z)\;\;{\rm  and }\;\;x_{\mathfrak v}={\rm e}^V\in G_{\mathfrak v}:=G/ G_{\mathfrak z} .$$
 The concentration  is performed by use of dilations: with $a\in\mathcal C_c^\infty(G)$, we associate
$$a_\eps(x)=a \left(\delta_{\eps^{-1/2 }}(x )\right).$$
The oscillations are forced by using coefficients of the representations, in the spirit of~\cite{pedersen}: with $\lambda_0\in\mathfrak z^*$,
$\Phi_1,\; \Phi_2$ smooth vectors in the space of representations, i.e. in ${\mathcal S}(\R^d)$, we associate the oscillating term
$$e_\eps(x)=\left(\pi^{\lambda_\eps}_x \Phi_1, \Phi_2\right), \;\;\lambda_\eps= \frac{\lambda_0}{\eps^2}.$$
We restrict to $\eps\in (0,1)$ and define the periodisation operator $\mathbb P$ in analogy with the case of the torus described above, using the multiplication on the left by elements of $\widetilde\Gamma$. We consider a subset~$\mathcal B$ of~$G$ which is a neighborhood of~$1_G$ and such that~$\cup_{\gamma\in \widetilde{\Gamma}} (\gamma {\mathcal B} )=G$ and we choose functions~$a$ that are in~${\mathcal C}^\infty_c(\mathcal B)$ (in other words, their support is a subset of the interior of $\mathcal B$).

\begin{proposition}\label{prop:WP}
Let $\Phi_1,\Phi_2\in \mathcal S(\R^d)$, $a\in C_c^\infty(\mathcal B)$, 
$x_0\in M$,  $\lambda_0\in\mathfrak z^*\setminus\{0\}$. Then, there exists $\eps_0>0$ such that  the family~$(v^\eps)_{\eps\in(0,\eps_0)}$  defined by
$$v^\eps (x)=|\lambda_\eps|^{d/2}\,\eps^{-p/2}
\, {\mathbb P} (e_\eps a_\eps)( x_0^{-1} x),$$
 is a  bounded $\eps$-oscillating family in $L^2(M)$ with bounded $\eps$-derivatives and momenta:
\begin{equation}\label{eq:WPsobolev}
\forall k\in\N,\;\;\exists C_k>0,\;\;\forall \eps\in(0,\eps_0),\;\;  \| (-\eps^2\Delta_M)^{k/2} v^\eps\|_{L^2(M)}\leq  C_k.
\end{equation}
Moreover, $(v^\eps)_{\eps\in(0,\eps_0)}$ has only one  semi-classical measure $\Gamma d\gamma$  where
\begin{equation}\label{eq:WPmeasures}
 \gamma=c_a\, \delta(x-x_0) \otimes \delta (\lambda-\lambda_0),\;\; c_a= \|\Phi_2\|^2\int_{G_{\mathfrak z}} |a( x_{\mathfrak z})|^2 d x_{\mathfrak z},
 \end{equation}
 and $\Gamma$ is the operator
defined by
$$\Gamma\Phi= \frac{(\Phi,\Phi_1)}{\|\Phi_1\|^2} \Phi_1,\;\;\forall \Phi\in L^2(\R^d).$$
\end{proposition}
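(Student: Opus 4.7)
My strategy is to reduce all computations on the nilmanifold $M$ to computations on the Lie group $G$, thanks to the shrinking support of $a_\eps$, and then to exploit the explicit interaction between the matrix coefficient $e_\eps$ and the non-commutative Fourier transform.

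\emph{Step 1: reduction to $G$.} Since $a\in C_c^\infty(\mathcal B)$ has support in the interior of the fundamental domain $\mathcal B$, and $a_\eps=a\circ\delta_{\eps^{-1/2}}$ has support in $\delta_{\eps^{1/2}}(\mathrm{supp}\,a)\subset \mathcal B$ for $\eps$ small, the translates $\gamma\cdot\mathrm{supp}(e_\eps a_\eps)$, $\gamma\in\widetilde\Gamma$, are pairwise disjoint and $\mathbb P$ becomes a direct sum. Combining this with \eqref{eq:correspGM} and the localization estimate \eqref{eq:localisation}, one has, for every $\sigma\in\mathcal A_0$ and every $N\in\N$,
\[
(\Op_\eps(\sigma)v^\eps,v^\eps)_{L^2(M)}= (\Op_\eps(\sigma)\widetilde v^\eps,\widetilde v^\eps)_{L^2(G)}+O(\eps^N),
\]
where $\widetilde v^\eps(x)=|\lambda_\eps|^{d/2}\eps^{-p/2}(e_\eps a_\eps)(x_0^{-1}x)$ is the unperiodized wave packet, viewed as an element of $L^2(G)$. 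All subsequent analysis is carried out on $G$.

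\emph{Step 2: $L^2$-norm, Sobolev bounds and $\eps$-oscillation.} After the change of variables $y=x_0\,\delta_{\eps^{1/2}}(w)$ (Jacobian $\eps^{Q/2}=\eps^{d+p}$), the scaling identity $\pi^{\lambda_\eps}_{\delta_{\eps^{1/2}}w}=\pi^{\lambda_0/\eps}_{w}$ together with \eqref{def:pilambda} yields, in the $\lambda_0$-adapted coordinates $(p,q,z)$ of $w$,
\[
\bigl|(\pi^{\lambda_\eps}_{\delta_{\eps^{1/2}}w}\Phi_1,\Phi_2)\bigr|^2
= \Bigl|F\bigl(\tfrac{\sqrt{|\lambda_0|}\,p}{\sqrt\eps},\tfrac{\sqrt{|\lambda_0|}\,q}{\sqrt\eps}\bigr)\Bigr|^2,
\qquad F(u,v)=\int_{\R^d}e^{i\xi\cdot v}\Phi_1(\xi+u)\overline{\Phi_2(\xi)}\,d\xi\in \mathcal S(\R^{2d}).
\]
A further change of variables $(u,v)=\sqrt{|\lambda_0|}\,(p,q)/\sqrt\eps$, followed by a Plancherel computation of $\|F\|_{L^2(\R^{2d})}^2$ in terms of $\|\Phi_1\|^2\|\Phi_2\|^2$, shows that $\|\widetilde v^\eps\|_{L^2(G)}^2$ converges as $\eps\to 0$ to $c_a$ (up to the Plancherel normalization $c_0$), the horizontal directions of $w$ being integrated out into $\|\Phi_1\|^2\|\Phi_2\|^2$ and the vertical direction yielding $\int_{G_\mathfrak z}|a(x_\mathfrak z)|^2\,dx_\mathfrak z$. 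The Sobolev bounds \eqref{eq:WPsobolev} follow from the key identity $-\Delta_G(\pi^{\lambda_\eps}_x\Phi_1,\Phi_2)=(\pi^{\lambda_\eps}_x H(\lambda_\eps)\Phi_1,\Phi_2)$: $(-\eps^2\Delta_M)^{k/2}v^\eps$ is a wave packet of the same form with $\Phi_1$ replaced by $(\eps^2 H(\lambda_\eps))^{k/2}\Phi_1=\bigl(|\lambda_0|\sum_{j=1}^d(-\partial_{\xi_j}^2+\xi_j^2)\bigr)^{k/2}\Phi_1\in\mathcal S(\R^d)$, modulo lower-order terms involving $V_j a_\eps=O(\eps^{-1/2})$ which contribute $O(\eps)$ after the $\eps^2$ factor. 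The $\eps$-oscillation is then automatic from Markov's inequality applied to any such Sobolev bound.

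\emph{Step 3: identification of the semiclassical measure.} For $\sigma\in\mathcal A_0$, the Fourier representation
\[
\Op_\eps(\sigma)\widetilde v^\eps(x)=c_0\int \Tr\!\bigl(\pi^\lambda_x\,\sigma(x,\eps\cdot\lambda)\,\mathcal F\widetilde v^\eps(\lambda)\bigr)|\lambda|^d\,d\lambda
\]
combined with the left-translation identity $\mathcal F\widetilde v^\eps(\lambda)=|\lambda_\eps|^{d/2}\eps^{-p/2}\mathcal F(e_\eps a_\eps)(\lambda)(\pi^\lambda_{x_0})^*$, unitarity of $\pi^\lambda_{x_0}$, and Taylor expansion of $\sigma(\cdot,\eps\cdot\lambda)$ at $x_0$ (losing $O(\sqrt\eps)$ since $\mathrm{supp}(\widetilde v^\eps)$ has diameter $O(\sqrt\eps)$), reduces $(\Op_\eps(\sigma)\widetilde v^\eps,\widetilde v^\eps)_{L^2(G)}$ to
\[
c_0|\lambda_\eps|^d\eps^{-p}\int\Tr\!\bigl(\sigma(x_0,\eps\cdot\lambda)\,\mathcal F(e_\eps a_\eps)(\lambda)\,\mathcal F(e_\eps a_\eps)(\lambda)^*\bigr)|\lambda|^d\,d\lambda+o(1).
\]
After the substitution $\mu=\eps\cdot\lambda$, I would show that the integrand concentrates on $\mu=\lambda_0$ with the rank-one profile $\Gamma$: this reflects that the non-commutative Fourier transform of $e_\eps a_\eps$ is sharply localized around $\lambda=\lambda_\eps$ with rank-one structure inherited from the identity $e_\eps(x)=\Tr(\pi^{\lambda_\eps}_x\Psi)$ for $\Psi:\Phi\mapsto(\Phi,\Phi_2)\Phi_1$, the vertical integration in $G$ producing $\delta(\mu-\lambda_0)$ and the factor $\int_{G_\mathfrak z}|a|^2$, and the horizontal ambiguity-function Plancherel yielding the operator $\Gamma$. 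Passing to the limit gives $(\Op_\eps(\sigma)v^\eps,v^\eps)_{L^2(M)}\to c_a\,\Tr(\sigma(x_0,\lambda_0)\Gamma)$, which matches \eqref{eq:WPmeasures} by uniqueness of the semiclassical measure.

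\emph{Main obstacle.} The delicate technical point is the concentration claim of Step 3, namely that $|\lambda_\eps|^d\eps^{-p}\,\mathcal F(e_\eps a_\eps)(\mu/\eps^2)\mathcal F(e_\eps a_\eps)(\mu/\eps^2)^*$ converges, as a trace-class-operator-valued measure in $\mu$, to $c_a\,\delta(\mu-\lambda_0)\otimes\Gamma$: because $a_\eps$ is not exactly constant on the scale at which $e_\eps$ varies, this requires to carefully track horizontal and vertical scales in the $(p,q,z)$-decomposition of $G$ and amounts to analyzing a twisted convolution of $\Psi$ with the Fourier transform of $a_\eps$. An alternative, which bypasses this direct computation, is to invoke the results of Appendix~\ref{a:wpsolutions}, in which wave packets are shown to be a stable class under the Schr\"odinger flow, and to deduce the semiclassical measure at $t=0$ from its structural form at later times together with the invariance properties recorded in Proposition~\ref{p:measure0}.
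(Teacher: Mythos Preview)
Your Steps~1 and~2 are essentially what the paper does: reduce to $G$ via the periodization and the localization estimate, compute the $L^2$ norm by the Plancherel-type identity for matrix coefficients (the paper states this as $|\lambda_0|^d\int_{G_{\mathfrak v}}(\pi^{\lambda_0}_{x_{\mathfrak v}}\Phi,\Psi)\overline{(\pi^{\lambda_0}_{x_{\mathfrak v}}\widetilde\Phi,\widetilde\Psi)}\,dx_{\mathfrak v}=(\Phi,\widetilde\Phi)\overline{(\Psi,\widetilde\Psi)}$), and get the Sobolev bounds from the action of left-invariant vector fields on matrix coefficients. One small correction: your identity $-\Delta_G(\pi^{\lambda_\eps}_x\Phi_1,\Phi_2)=(\pi^{\lambda_\eps}_x H(\lambda_\eps)\Phi_1,\Phi_2)$ is not quite right, since $H(\lambda)$ lands on the \emph{left} of $\pi^\lambda_x$ (equivalently, by self-adjointness, on $\Phi_2$), not on $\Phi_1$; this does not affect the boundedness conclusion but it matters if one wants to track the exact harmonics.

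In Step~3 there is a genuine gap. You correctly isolate the obstacle --- the concentration of $|\lambda_\eps|^d\eps^{-p}\,\mathcal F(e_\eps a_\eps)(\mu/\eps^2)\mathcal F(e_\eps a_\eps)(\mu/\eps^2)^*$ at $\mu=\lambda_0$ with rank-one profile~$\Gamma$ --- but you do not prove it, and the sketch (``twisted convolution of $\Psi$ with the Fourier transform of $a_\eps$'') hides precisely the scale-separation argument that makes this nontrivial. Your proposed alternative via Appendix~\ref{a:wpsolutions} and Proposition~\ref{p:measure0} is circular: the appendix treats only the special harmonics $\Phi_1=\Phi_2=h_0$, and in any case the transport equation of Proposition~\ref{p:measure0} only propagates a semiclassical measure that is already known at some time --- which is exactly what you are trying to establish.

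The paper avoids your Fourier-side computation entirely by proving a structural lemma in physical space: for $\sigma\in\mathcal A_0$,
\[
\bigl\|\Op_\eps(\sigma)\,WP^\eps_{x_0,\lambda_0}(a,\Phi_1,\Phi_2)-WP^\eps_{x_0,\lambda_0}(a,\sigma(x_0,\lambda_0)\Phi_1,\Phi_2)\bigr\|_{L^2(M)}=O(\sqrt\eps).
\]
This is proved by writing $\Op_\eps(\sigma)$ as convolution with $\kappa^\eps_x$, rescaling via $x\mapsto x_0\delta_{\sqrt\eps}x$, and Taylor-expanding both $\kappa_{x_0\delta_{\sqrt\eps}x}$ and $a(x(\delta_{\sqrt\eps}w)^{-1})$; the leading term is exactly $a(x)\sigma(x_0,\lambda_0)$ acting on $\Phi_1$, and the remainder is controlled using the decay relations~\eqref{est:growth}. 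Combined with a direct computation of the inner product of two wave packets with the same cores (Lemma~\ref{lem:WP2}), this gives $(\Op_\eps(\sigma)v^\eps,v^\eps)\to(\sigma(x_0,\lambda_0)\Phi_1,\Phi_1)\|\Phi_2\|^2\int_{G_{\mathfrak z}}|a|^2$ immediately. This route is both shorter and yields the stronger statement that $\Op_\eps(\sigma)$ preserves the wave-packet class to leading order, which is of independent interest.
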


\medskip

In the following, we shall say that the family $v^\eps$ is a wave packet on $M$ with cores $(x_0,\lambda_0)$, profile~$a$ and harmonics $(\Phi_1,\Phi_2)$, and write
$$v^\eps= WP^\eps_{x_0,\lambda_0}(a,\Phi_1,\Phi_2)=|\lambda_\eps|^{d/2}\,\eps^{-p/2}
\, {\mathbb P} (e_\eps a_\eps)(  x_0^{-1}x).$$

\begin{remark}\label{rem:toto}
\begin{enumerate}
\item Note that $\eps_0$ is chosen small enough so that  for $\eps\in(0,\eps_0)$, the function $G\ni x\mapsto a_\eps(x)$ has support  included in a fundamental domain of $G$ for $\widetilde{\Gamma}$ and thus $x\mapsto (e_\eps a_\eps)(x_0^{-1}x)$ can be extended by periodicity on $G$, which defines a function of $M$.
\item Omitting the periodisation operator $\mathbb P$, we construct wave packets on~$G$ that also satisfy estimates in momenta
$$\forall k\in\N,\;\;\exists C_k>0,\;\;\forall \eps>0,\;\; \sum_{1\leq p+q\leq k} \| |x|^p (-\eps^2\Delta_G)^{q/2} v^\eps\|_{L^2(G)}\leq  C_k.$$
\item  The coefficient~$|\lambda_\eps|^{d/2}\eps^{-p/2}$  guarantees the boundedness in $L^2(M)$ of the family~$(v^\eps)_{\eps>0}$.
\item {\it Characterization of wave packets.} Let
$x\in M$ be identified to a point of $G$ and let us fix $\Phi_1$, $\Phi_2$, $x_0$ and $\lambda_0$. Then, $v^\eps$ is a wave packet on $M$ if there exist  $x_0\in M$,  $\lambda_0\in\mathfrak z^*\setminus\{0\}$, $a\in C_c^\infty(\mathcal B)$ and $\Phi_1,\Phi_2\in \mathcal S(\R^d)$, such that
\begin{align}\label{e:carac}
\eps^{Q/4} v^\eps(x_0\delta_{\sqrt\eps} (x)) & =|\lambda_\eps| ^{d/2}  \eps^{Q/4-p/2}a(x)  ( \Phi_1, (\pi^{\lambda_0} _{\delta_{\eps^{-1/2} }(x)})^*\Phi_2)\\
\nonumber
&= |\lambda_0|^{d/2} \eps^{-d/2}a(x) ( \Phi_1, (\pi^{\lambda_0} _{\delta_{\eps^{-1/2} }(x)})^*\Phi_2).
\end{align}
\item {\it Generalization.} The construction we make here extends to more general Lie groups following ideas from Section~6.4 in~\cite{FF1} and~\cite{pedersen}.
\end{enumerate}
\end{remark}

\subsection{Proof of Proposition~\ref{prop:WP}} The proof of Proposition \ref{prop:WP} is relatively long, and we decompose it into several steps.

\medskip

 \subsubsection{The norm of wave packets} \label{s:norm}
 By the definition of the periodisation operator $\mathbb P$,
 $$\int_M | v^\eps(x)|^2 dx=  |\lambda_\eps|^{d} \eps^{-p}\int_{G} |a_\eps(x_0^{-1}x)|^2 |e_\eps(x_0^{-1}x) |^2dx.$$
 We then use~\eqref{e:carac} and we write
  \begin{align*}
 \| v^\eps\|^2_{L^2(G)}&= |\lambda_0|^d \eps^{-d}
  \int_G |a(x)|^2 |(\pi^{\lambda_0}_{\delta_{\eps^{-1/2}} x} \Phi_1,\Phi_2)|^2 dx\\
&= |\lambda_0|^d  \int_G |a(\delta_{\sqrt\eps} (x_{\mathfrak v}) x_{\mathfrak z})|^2 |(\pi^{\lambda_0}_{x_{\mathfrak v} }\Phi_1,\Phi_2)|^2 dx_{\mathfrak v} dx_{\mathfrak z}\\
 &\leq\left(\int_{G_{\mathfrak z} }  \sup_{y_{\mathfrak v} \in G_{\mathfrak v}} |a (y_{\mathfrak v} x_{\mathfrak z})|^2 dx_{\mathfrak z} \right)
 \left( |\lambda_0|^d  \int_{G_{\mathfrak v}}|(\pi^{\lambda_0}_{x_{\mathfrak v}} \Phi_1, \Phi_2)|^2 dx_{\mathfrak v}\right).
 \end{align*}

Let us note that the following relation holds for any $\Phi, \widetilde{\Phi},\Psi,\widetilde{\Psi}\in \mathcal S(\R^d)$:
\begin{equation} \label{e:intPhi}
|\lambda_0|^d \int_{G_{\mathfrak{v}}} (\pi_{x_{\mathfrak v}}^{\lambda_0}\Phi,\Psi)\overline{ (\pi_{x_{\mathfrak v}}^{\lambda_0}\widetilde{\Phi},\widetilde{\Psi})}dx_{\mathfrak v} = (\Phi,\widetilde{\Phi})\overline{(\Psi,\widetilde{\Psi})}.
\end{equation}
Therefore,
$$|\lambda_0|^d \int_{G_{\mathfrak v}}|(\pi^{\lambda_0}_{x_{\mathfrak v}} \Phi_1, \Phi_2)|^2 dx_{\mathfrak v}=\| \Phi_1\|^2 \| \Phi_2\|^2.$$
We deduce that $v^\eps$ is uniformly bounded in $L^2(G)$.

\subsubsection{The  $\eps$-oscillation and the  regularity of wave packets.}
  Straightforward computations give that if
  $\lambda\in {\mathfrak z}^*\setminus \{0\}$,  $\Phi_1,\Phi_2\in \mathcal S(\mathbb R^d)$,
$x_{\mathfrak v} = {\rm Exp} [P+Q]$,  $x=x_{\mathfrak v}x_{\mathfrak z}$ with
$$P=\sum_{j=1}^{d} p_j P_j^{(\lambda)}\;\;{\rm  and}  \;\;Q=\sum_{j=1}^{d} q_j Q_j^{(\lambda)} ,$$
 then, for $1\leq j\leq d$,
\begin{equation}\label{est:growth}
\sqrt{|\lambda|} \,q_j \left(\pi^{\lambda}_{x} \Phi_1,\Phi_2\right)=  \left([ \pi^{ \lambda}_{x}, i\partial_{\xi_j}]\Phi_1,\Phi_2\right) ,\;\;
\sqrt{|\lambda|} \, p_j \left(\pi^{ \lambda}_{x} \Phi_1,\Phi_2\right)=  \left([\pi^{ \lambda}_{x} ,\xi_j]\Phi_1,\Phi_2\right) .
\end{equation}
Besides,
\begin{equation}\label{eq:PQWP}
P_j^{(\lambda)} \left(\pi^{\lambda}_{x} \Phi_1,\Phi_2\right) =  \sqrt{|\lambda|} \left(  \partial_{\xi_j}  \pi^{\lambda}_{x}\Phi_1,\Phi_2\right)\;\;\mbox{and}\;\;
Q_j^{(\lambda)} \left(\pi^{\lambda}_{x} \Phi_1,\Phi_2\right) = i\sqrt{|\lambda|} \left(
\xi_j \pi^\lambda_{x} \Phi_1,\Phi_2\right).
\end{equation}
For proving this formula for $P_j^{(\lambda)}$, we use~\eqref{eq:Xf} and we observe
$${\rm Exp} (tP_j^{(\lambda)}) {\rm Exp} (P+Q+Z) = {\rm Exp} (tP_j^{(\lambda)}+P+Q+Z +\frac t2 [P_j^{(\lambda)},P+Q]).$$
Since $[P_j^{(\lambda)}, Q_j^{(\lambda)}]= \mathcal Z^{(\lambda)}$ and for $k\not=j$,
$[P_j^{(\lambda)}, P_k^{(\lambda)}]=[P_j^{(\lambda)},Q_k^{(\lambda)}]=0$,
we deduce
$${\rm Exp} (tP_j^{(\lambda)}) {\rm Exp} (P+Q+Z) = {\rm Exp} (tP_j^{(\lambda)}+P+Q+Z +\frac t2 q_j {\mathcal Z}^{(\lambda)}).$$
Therefore, using $\lambda(\mathcal Z^{(\lambda)})= |\lambda|$, we obtain for $\Phi\in\mathcal S(\R^d)$ and $\xi\in\R^d$,
$$\left.\frac d {dt} \left( \pi^\lambda_{ {\rm Exp} (tP_j^{(\lambda)}) x} \Phi(\xi)\right)\right|_{t=0} =
\sqrt{|\lambda|} \pi^\lambda_{  x} \partial_{\xi_j} \Phi(\xi)
+i |\lambda| q_j  \pi^\lambda_{ x} \Phi(\xi)=\sqrt{|\lambda|} \partial_{\xi_j} \pi^\lambda_{ x}\Phi(\xi) .$$
The proof for $Q_j^{(\lambda)}$ is similar.
We deduce
~\eqref{eq:WPsobolev} and that the family $(v^\eps)$ is uniformly $\eps$-oscillating by the Sobolev criteria of Proposition~4.6 in~\cite{FF1}.

\subsubsection{Action of pseudodifferential operators on wave packets. }

For studying their semi-classical measure, it is convenient to analyze first the action of pseudodifferential operators on wave packets.

  \begin{lemma}\label{lem:pseudo}
  Let $\Phi_1,\;\Phi_2\in\mathcal S(\R^d)$, $(x_0,\lambda_0)\in G\times \mathfrak z^*$, $a\in{\mathcal C}^\infty_c(\mathcal B)$. Let $\sigma\in{{\mathcal A}_0}$ compactly supported in an open set $\Omega$ such that $\overline \Omega$ is strictly included in a fundamental domain $\mathcal B$ of $\widetilde \Gamma$. Then there exist $\eps_1>0$ and $c_1>0$ such that for all $\eps\in(0,\eps_1)$,  
  $$\| {\rm Op}_\eps(\sigma) WP^\eps_{x_0,\lambda_0}(a,\Phi_1,\Phi_2)- WP^\eps_{x_0,\lambda_0}(a,\sigma(x_0,\lambda_0)\Phi_1,\Phi_2)\|_{L^2(M)}  \leq c_1\, \sqrt\eps.$$
\end{lemma}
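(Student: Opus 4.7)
The plan is to reduce to a pointwise computation on $G$ and exploit both the Fourier-theoretic definition of $\sigma$ and the group-representation structure of the wave packet. First, by property (5) of Section 2.2.1 and Remark \ref{correspGM}, together with the fact that $\sigma$ is compactly supported away from the boundary of the fundamental domain and that $v^\eps$ is concentrated at $x_0$ at scale $\sqrt\eps$, I can work on $G$ with a non-periodized representative of the wave packet, modulo an $O(\eps^N)$ error. Writing $\Op_\eps(\sigma)v^\eps(x)=\int_G \kappa^\eps_x(z)\,v^\eps(xz^{-1})\,dz$ with the substitution $z=y^{-1}x$, the multiplicativity of the representation gives
$$
\pi^{\lambda_\eps}_{x_0^{-1}xz^{-1}}\Phi_1 = \pi^{\lambda_\eps}_{x_0^{-1}x}\,\pi^{\lambda_\eps}_{z^{-1}}\Phi_1,
$$
so the ``slow'' factor $\pi^{\lambda_\eps}_{x_0^{-1}x}$ can be moved across the inner product and the $z$-integral singles out the operator $\int_G \kappa^\eps_x(z)(\pi^{\lambda_\eps}_z)^*\,dz = \mathcal F\kappa^\eps_x(\lambda_\eps)$.

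The next step is to identify this integral: by the dilation identity $\pi^{\lambda_\eps}_{\delta_\eps u} = \pi^{\eps^2\lambda_\eps}_u = \pi^{\lambda_0}_u$ and the change of variables $u=\delta_{\eps^{-1}}z$, one obtains $\mathcal F\kappa^\eps_x(\lambda_\eps) = \mathcal F\kappa_x(\eps^2\lambda_\eps) = \sigma(x,\lambda_0)$. Consequently the leading-order contribution is
$$
|\lambda_\eps|^{d/2}\eps^{-p/2}\,a(\delta_{\eps^{-1/2}}(x_0^{-1}x))\,\bigl(\pi^{\lambda_\eps}_{x_0^{-1}x}\,\sigma(x,\lambda_0)\Phi_1,\Phi_2\bigr),
$$
which, after replacing $\sigma(x,\lambda_0)$ by $\sigma(x_0,\lambda_0)$, is precisely $WP^\eps_{x_0,\lambda_0}(a,\sigma(x_0,\lambda_0)\Phi_1,\Phi_2)(x)$. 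This last replacement costs $O(\sqrt\eps)$ since $\sigma$ is smooth in $x$ and the profile $a_\eps$ forces $|x_0^{-1}x|=O(\sqrt\eps)$ on the effective support.

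The main technical point, and the only nontrivial error to control, is the Taylor expansion of the profile. Setting $\xi=\delta_{\eps^{-1/2}}(x_0^{-1}x)$ and $w=\delta_{\eps^{-1/2}}(z^{-1})=\delta_{\sqrt\eps}(u^{-1})$ with $u=\delta_{\eps^{-1}}z$, one has $a(\delta_{\eps^{-1/2}}(x_0^{-1}xz^{-1})) = a(\xi w)$. Because $|w|\lesssim\sqrt\eps\,|u|$ for the homogeneous quasi-norm, a first-order Taylor expansion yields $a(\xi w)=a(\xi)+\sqrt\eps\,R_\eps(\xi,u)$, where $R_\eps(\xi,u)$ grows polynomially in $|u|$. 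Integrating the remainder against $\kappa_x(u)(\pi^{\lambda_0}_u)^*\Phi_1$ converges absolutely thanks to the Schwartz character of $\kappa_x$ and of $\Phi_1$, producing a new wave-packet-like expression whose $L^2(M)$ norm is $O(1)$, so the full contribution of this remainder is $O(\sqrt\eps)$ in $L^2(M)$.

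Combining the three sources of error---the periodization cutoff ($O(\eps^N)$), the Taylor expansion of $a$ ($O(\sqrt\eps)$), and the smoothness correction $\sigma(x,\lambda_0)-\sigma(x_0,\lambda_0)=O(\sqrt\eps)$---and using that each residual object is itself controlled in $L^2(M)$ by the same argument as in Section \ref{s:norm}, one obtains the announced bound $\|{\Op}_\eps(\sigma)WP^\eps_{x_0,\lambda_0}(a,\Phi_1,\Phi_2)-WP^\eps_{x_0,\lambda_0}(a,\sigma(x_0,\lambda_0)\Phi_1,\Phi_2)\|_{L^2(M)}\leq c_1\sqrt\eps$. The main obstacle is really just keeping track, in the Taylor remainder, of how the Schwartz decay of $\kappa_x$ in the rescaled variable $u$ compensates the growth introduced by the dilation $\delta_{\eps^{-1/2}}$.
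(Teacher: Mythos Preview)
Your overall strategy coincides with the paper's: reduce from $M$ to $G$ via the localization property, unfold the convolution using the multiplicativity $\pi^{\lambda_\eps}_{x_0^{-1}xz^{-1}}=\pi^{\lambda_\eps}_{x_0^{-1}x}\pi^{\lambda_\eps}_{z^{-1}}$ together with the dilation identity $\pi^{\lambda_\eps}_{\delta_\eps u}=\pi^{\lambda_0}_u$ to produce $\sigma(x,\lambda_0)$ as leading operator, and then Taylor-expand both the profile $a$ and the symbol in $x$ around $x_0$. Your presentation of the leading-order identification is in fact slightly more direct than the paper's, which reaches the same object $Q_0(x)=a(x)\sigma(x_0,\lambda_0)$ through a longer chain of changes of variables.

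There is, however, a genuine gap in your treatment of the remainders. You assert that the residual term ``is a new wave-packet-like expression whose $L^2(M)$ norm is $O(1)$'' and that this follows ``by the same argument as in Section~\ref{s:norm}''. This is not sufficient. In Section~\ref{s:norm} the harmonics $\Phi_1,\Phi_2$ are fixed Schwartz vectors, and the $L^2$ bound comes from the identity~\eqref{e:intPhi}, which integrates $|(\pi^{\lambda_0}_{x_{\mathfrak v}}\Phi_1,\Phi_2)|^2$ over $G_{\mathfrak v}$. In your remainder the effective ``first harmonic'' is $\int_G\kappa_x(u)R_\eps(\xi,u)(\pi^{\lambda_0}_u)^*\Phi_1\,du$, which depends on $\xi$ (hence on $x_{\mathfrak v}$ after the rescaling), so~\eqref{e:intPhi} does not apply directly. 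A crude Cauchy--Schwarz bound on the inner product yields, after the change of variable in $x_{\mathfrak v}$, an integrand that is merely bounded and not integrable over $G_{\mathfrak v}$; the resulting estimate blows up like $\eps^{1-d}$. The paper isolates exactly this difficulty as Lemma~\ref{l:unifbounded} and resolves it by invoking the commutation relations~\eqref{est:growth}, which trade powers of $|x_{\mathfrak v}|$ for applications of $\xi_j$ and $\partial_{\xi_j}$ to the Schwartz vectors $\Phi_1,\Phi_2$, thereby producing the polynomial decay in $x_{\mathfrak v}$ needed for integrability. Your final sentence attributes the required compensation to the Schwartz decay of $\kappa_x$ in $u$; that decay only guarantees absolute convergence of the $u$-integral and does not by itself control the $L^2_x$ norm. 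To complete your argument you must supply this $x_{\mathfrak v}$-decay mechanism explicitly.
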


\begin{remark} \label{rem:asympexpansion}
 The proof we perform below shows that there exist sequences of profiles $(a_j)_{j\in\N}$ and of harmonics $(\Phi_1^{(j)}, \Phi_2^{(j)})_{j\in\N}$ such that for all $N\in\N$,
 $$\| {\rm Op}_\eps(\sigma) WP^\eps_{x_0,\lambda_0}(a,\Phi_1,\Phi_2)- \sum_{j=0}^N \eps^{\frac j 2} WP^\eps_{x_0,\lambda_0}(a_j,\Phi_1^{(j)},\Phi_2^{(j)})\|_{L^2(M)}  \leq c_1\, (\sqrt\eps)^{N+1}.$$
 Moreover, by commuting the operator $(-\eps^2 \Delta_G)^{s/2}$ with the pseudodifferential operators, one can extend this result in Sobolev spaces.
 Note also that the same type of expansion holds in $G$, in refined functional spaces where momenta are controlled:
$$
   \| {\rm Op}_\eps(\sigma) WP^\eps_{x_0,\lambda_0}(a,\Phi_1,\Phi_2)- \sum_{j=0}^N \eps^{\frac j 2} WP^\eps_{x_0,\lambda_0}(a_j,\Phi_1^{(j)},\Phi_2^{(j)})\|_{\Sigma^k_\eps(G)}  \leq c_1\, \eps^{\frac{N+1}{2}}$$
 where $\Sigma^k_\eps$ is the vector space of functions $f\in L^2(G)$ for which the semi-norms
\begin{equation} \label{e:Sigmakeps}
\| f\|_{\Sigma^k_\eps} := \sum_{\ell=0}^k\left( \| |x|^\ell f\|_{L^2(G)} +  \| (-\eps^2 \Delta_G)^{\ell/2} f\|_{L^2(G)}\right)
\end{equation}
 are finite.
 \end{remark}

\begin{proof}
We first observe that, in view of Remark~\ref{correspGM}, it is enough to prove the result for wave packets in $G$. Indeed, consider $\chi\in\mathcal C^\infty_c(\overline{\mathcal B})$ with $\chi\sigma=\sigma$. Then for any function $f\in \mathcal C^\infty_c(\overline{\mathcal B})$ and   $x\in M$ identified to the point $x$ of $G\cap \mathcal B$, we have for all $N\in\N$, thanks to \eqref{eq:localisation},
\begin{align*}
{\rm Op}_\eps(\sigma) \mathbb P (f) (x) & = {\rm Op}_\eps (\sigma) \chi \mathbb P(f) (x) +O(\eps ^N)\\
&=  {\rm Op}_\eps (\sigma) \chi f (x) +O(\eps ^N)= {\rm Op}_\eps(\sigma) f(x) +O(\eps^N).
\end{align*}
Therefore, we are going to prove the result of Lemma~\ref{lem:pseudo} for wave packets and pseudodifferential operators in $G$.
Besides,
for simplicity, we assume that $\sigma(x,\cdot)$ is the Fourier transform of a compactly supported function. This technical assumption simplifies the proof which extends naturally to symbols that are Fourier transforms of Schwartz class functions.

\medskip

We write
\begin{align*}
{\rm Op}_\eps(\sigma) v^\eps(x) &=c_0|\lambda_\eps|^{d/2} \eps^{-p/2}
 \int_{G\times \widehat G}  {\rm Tr} (\pi^\lambda_{y^{-1} x}\sigma(x,\eps^2\lambda) )
a_\eps(x_0^{-1}y) (\pi_{x_0^{-1}y}^{\lambda_\eps} \Phi_1,\Phi_2) |\lambda|^d d\lambda dy\\
&= c_0|\lambda_\eps|^{d/2} \eps^{-p/2}
 \int_{G\times \widehat G}  {\rm Tr} (\pi^\lambda_{y^{-1} x_0^{-1} x}\sigma(x,\eps^2\lambda) )
a_\eps(y) (\pi_{y}^{\lambda_\eps} \Phi_1,\Phi_2) |\lambda|^d d\lambda dy.
\end{align*}
where we have performed the change of variable $y\mapsto x_0y$. We now focus on $\eps^{-Q/4}{\rm Op}_\eps(\sigma) v^\eps(x_0\delta_{\sqrt\eps} x)$ in order to simplify the computations. Note that this quantity is uniformly bounded in $L^2(G)$.
 \begin{align*}
{\rm Op}_\eps(\sigma) v^\eps(x_0\delta_{\sqrt\eps}x) &
&= c_0 |\lambda_\eps|^{d/2}\eps^{-p/2}
 \int_{G\times \widehat G}  {\rm Tr} (\pi^\lambda_{y^{-1}  \delta_{\sqrt\eps} x}\sigma(x_0 \delta_{\sqrt\eps} x ,\eps^2\lambda)
a_\eps(y) (\pi_{y}^{\lambda_\eps} \Phi_1,\Phi_2) |\lambda|^d d\lambda dy.
\end{align*}
We perform the change of variable $\widetilde y = \delta_{\eps^{-1/2}} y$ and $\widetilde\lambda= \eps^2 \lambda$.
We have
$$\pi^{\lambda} _{y^{-1} \delta_{\sqrt\eps} x}= \pi^{\widetilde \lambda/\eps^2}_{\delta_{\sqrt\eps}(y^{-1} x)}=
\pi^{\widetilde\lambda}_{\delta_{\eps^{-1/2}} (\widetilde y ^{-1}x )},
\;\;
\pi^{\lambda_\eps}_y= \pi^{\lambda_0/\eps^2}_{ \delta _{\sqrt\eps}\widetilde y} = \pi^{\lambda_0} _{ \delta_{\eps^{-1/2}} (y)}$$
and
$$|\widetilde\lambda|^d d\widetilde \lambda d\widetilde y = \eps^{2d}\eps^{2p} \eps^{-Q/2} |\lambda|^d d\lambda dy= \eps^{Q/2}  |\lambda|^dd\lambda dy.$$
We obtain
 \begin{align*}
{\rm Op}_\eps(\sigma) v^\eps(x_0\delta_{\sqrt\eps} x) =\, & c_0|\lambda_\eps|^{d/2}\eps^{-p/2}\eps^{-Q/2} \\
&\;\; \times \int_{G\times \widehat G}  {\rm Tr} (\pi^{\lambda}_{ \delta_{\eps^{-1/2}} (y^{-1} x)}\sigma(x_0\delta_{\sqrt\eps} x,\lambda))
a(y) (\pi_{\delta_{\eps^{-1/2}}(y)}^{\lambda_0} \Phi_1,\Phi_2) |\lambda|^d d\lambda dy.
\end{align*}
The change of variables $w=\delta_{\eps^{-1/2}} (y^{-1} x)$ (for which $dy= \eps^{Q/2} dw$ and $y=x(\delta_{\sqrt\eps }w)^{-1}$)) gives
\begin{align*}
{\rm Op}_\eps(\sigma) v^\eps(x_0\delta_{\sqrt\eps}x)
=\, & c_0 |\lambda_\eps|^{d/2}\eps^{-p/2} \\
&\;\;\times  \int_{G\times \widehat G}  {\rm Tr} (\pi^{\lambda}_{w  }\sigma(x_0\delta_{\sqrt\eps}  x,\lambda) )
a(x (\delta_{\sqrt\eps} w)^{-1}) (\pi_{ (\delta_{\eps^{-1/2}} (x))w^{-1}  }^{\lambda_0} \Phi_1, \Phi_2) |\lambda|^d d\lambda dw\\
=& \; c_0 |\lambda_\eps|^{d/2}\eps^{-p/2} \\
&\;\;\times  \int_{G\times \widehat G}  {\rm Tr} (\pi^{\lambda}_{w  }\sigma(x_0\delta_{\sqrt\eps}  x,\lambda) )
a(x (\delta_{\sqrt\eps} w)^{-1})
(\pi_{ w^{-1}  }^{\lambda_0} \Phi_1,
(\pi^{\lambda_0}_{\delta_{\eps^{-1/2} }(x)})^*\Phi_2) |\lambda|^d d\lambda dw.
\end{align*}
Computing the integral in $\lambda$ thanks to the inverse Fourier transform formula \eqref{inversionformula} and denoting by~$\kappa_x$ the Schwartz function such that $\sigma(x,\cdot)=
\mathcal F(\kappa_x)$ we have
$$
\eps^{Q/4} {\rm Op}_\eps(\sigma) v^\eps(x_0\delta_{\sqrt\eps}x)
= |\lambda_0|^{d/2}\eps^{-d/2}   \int_{G}  \kappa_{x_0\delta_{\sqrt\eps}  x}(w )
a(x (\delta_{\sqrt\eps} w)^{-1})
(\pi_{ w^{-1}  }^{\lambda_0} \Phi_1,
(\pi^{\lambda_0}_{\delta_{\eps^{-1/2} }(x)})^*\Phi_2)  dw$$
that we can rewrite
$$
\eps^{Q/4} {\rm Op}_\eps(\sigma) v^\eps(x_0\delta_{\sqrt\eps}x)
= |\lambda_0|^{d/2}\eps^{-d/2} \left( Q^\eps(x)  \Phi_1,
(\pi^{\lambda_0}_{\delta_{\eps^{-1/2} }(x)})^*\Phi_2\right)
$$
with
$$Q^\eps(x)=
 \int_{G}  \kappa_{x_0\delta_{\sqrt\eps}  x}(w )
a(x (\delta_{\sqrt\eps} w)^{-1})
\pi_{ w^{-1}  }^{\lambda_0} dw.$$
By performing a Taylor formula on the functions $x\mapsto \kappa_{x_0\delta_{\sqrt\eps}  x}(w )$ and $w \mapsto a(x (\delta_{\sqrt\eps} w)^{-1}) $, we see that the operator $Q^\eps(x)$ admits a formal asymptotic expansion of the form
\begin{equation}\label{expansion:Q}
Q^\eps(x)= Q_0(x)+\sqrt\eps Q_1(x) + \cdots +\eps^{\frac j 2} Q_j(x) +\cdots
\end{equation}
with
$$Q_0(x)= a(x) \int_{G}  \kappa_{x_0}(w )
\pi_{ w^{-1}  }^{\lambda_0} dw=a(x) \sigma(x_0,\lambda_0).$$
It remains to prove the convergence of this asymptotic expansion by examining the remainder term.

\medskip

We examine the one-term expansion.
We write
\begin{equation} \label{e:aA}
a(x (\delta_{\sqrt\eps} w)^{-1})  = a(x) +   A(x,\delta_{\sqrt\eps} w)
\end{equation}
with
\begin{equation}\label{est:Aeps}
|A (x,w)| \leq \sum_{j=1}^{2d}  \sup_{|z|\leq |w|}  |z_j|   | V_j a (xz)|\leq C_a |w| ,
\end{equation}
where for $z\in G$, $|z|$ denotes the homogeneous norm defined in~\eqref{def:quasinorm}.
We obtain
\begin{equation} \label{e:devQ}
\eps^{Q/4} {\rm Op}_\eps(\sigma) v^\eps(x_0\delta_{\sqrt\eps}x) =
|\lambda_0|^{d/2}\eps^{-d/2} \left( Q_0 \Phi_1,
(\pi^{\lambda_0}_{\delta_{\eps^{-1/2} }(x)})^*\Phi_2\right) a(x) +\sqrt\eps  r_1^\eps(x)+\sqrt\eps  r_2^\eps(x)\end{equation}
with
$$r_1^\eps(x)=
 |\lambda_0|^{d/2}\eps^{-d/2} \left( R_1^\eps(x)  \Phi_1,
(\pi^{\lambda_0}_{\delta_{\eps^{-1/2} }(x)})^*\Phi_2\right) ,\;\;
R_1^\eps(x)= \eps^{-1/2}  \int_{G}  (\kappa_{x_0\delta_{\sqrt\eps}  x}(w )-\kappa_{x_0}(w)) a(x)
\pi_{ w^{-1}  }^{\lambda_0} dw$$
and
$$r_2^\eps(x)=
 |\lambda_0|^{d/2}\eps^{-d/2} \left( R_2^\eps(x)  \Phi_1,
(\pi^{\lambda_0}_{\delta_{\eps^{-1/2} }(x)})^*\Phi_2\right) ,\;\;
R_2^\eps(x)= \eps^{-1/2}  \int_{G}  \kappa_{x_0\delta_{\sqrt\eps}  x}(w )
A(x, \delta_{\sqrt\eps} w)
\pi_{ w^{-1}  }^{\lambda_0} dw.$$

\begin{lemma} \label{l:unifbounded}
The families $(r_1^\eps)_{\eps>0}$ and $(r_2^\eps)_{\eps>0}$ are uniformly bounded in $L^2(G)$.
\end{lemma}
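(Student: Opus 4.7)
The plan is to perform a Taylor expansion of $R_1^\eps(x)$ and $R_2^\eps(x)$ in the parameter $\sqrt\eps$ up to some order $N\geq d$. Each leading term in the expansion will take the structural form (compactly supported smooth function of $x$) $\times$ (fixed bounded operator on $L^2(\R^d)$), whose contribution to $r_i^\eps$ is a standard wave packet with uniformly bounded $L^2(G)$-norm thanks to the computation carried out in Section~\ref{s:norm}. The Taylor remainder will be $O(\eps^{N/2})$ in operator norm on the support of $a$; its contribution to $r_i^\eps$ will be controlled by a crude Cauchy--Schwarz estimate, which comes with an $\eps^{-d/2}$ loss from the prefactor $|\lambda_0|^{d/2}\eps^{-d/2}$ in the definition of $r_i^\eps$, so choosing $N\geq d$ absorbs this loss.

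For $r_1^\eps$, I would use that $\kappa\in C^\infty(M,\mathcal S(G))$ to Taylor-expand $y\mapsto\kappa_y(w)$ at $y=x_0$, parametrising a neighbourhood of $x_0$ by $y=x_0\,{\rm Exp}(Y_{\mathfrak v}+Y_{\mathfrak z})$. Since $\delta_{\sqrt\eps}x={\rm Exp}(\sqrt\eps V+\eps Z)$, a stratified monomial $Y_{\mathfrak v}^\alpha Y_{\mathfrak z}^\beta$ contributes $\eps^{(|\alpha|+2|\beta|)/2}v^\alpha z^\beta$, where $v,z$ are the coordinates of $V,Z$. After dividing by $\sqrt\eps$, multiplying by $a(x)$, and Fourier-transforming in $w$ at $\lambda_0$, one arrives at
\[
R_1^\eps(x) \;=\; a(x)\!\!\sum_{1\leq|\alpha|+2|\beta|\leq N}\!\!\eps^{\frac{|\alpha|+2|\beta|-1}{2}}\,\frac{v^\alpha z^\beta}{\alpha!\,\beta!}\,B_{\alpha,\beta}\;+\;\eps^{N/2}\,a(x)\,T_N^\eps(x),
\]
where $B_{\alpha,\beta}=\mathcal F\!\bigl((V^\alpha Z^\beta\kappa)_{x_0}\bigr)(\lambda_0)$ is a fixed bounded operator (since $(V^\alpha Z^\beta\kappa)_{x_0}\in\mathcal S(G)$), and $T_N^\eps(x)$ is smooth in $x$ and uniformly bounded in operator norm on $\text{supp}(a)$ as $\eps\to0$. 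Each main term then contributes to $r_1^\eps$ a standard wave packet of profile $a(x)v^\alpha z^\beta\in C_c^\infty(\mathcal B)$ and fixed harmonics $(B_{\alpha,\beta}\Phi_1,\Phi_2)$, whose $L^2(G)$-norm is uniformly bounded by Section~\ref{s:norm}, with a non-negative power of $\eps$ in front.

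The remainder contributes to $r_1^\eps$ the term
\[
\eps^{N/2}\,|\lambda_0|^{d/2}\,\eps^{-d/2}\bigl(\pi^{\lambda_0}_{\delta_{\eps^{-1/2}}x}\,a(x)T_N^\eps(x)\Phi_1\,,\,\Phi_2\bigr).
\]
Using $\|T_N^\eps(x)\Phi_1\|_{L^2(\R^d)}\leq C\|\Phi_1\|$ uniformly on $\text{supp}(a)$, together with the unitarity of $\pi^{\lambda_0}$ and Cauchy--Schwarz, the pointwise bound by $C\|\Phi_1\|\|\Phi_2\|$ on a set of finite volume gives an $L^2(G)$-norm of the remainder bounded by $C'\eps^{(N-d)/2}\sqrt{|\text{supp}(a)|}$, which is uniformly bounded as soon as $N\geq d$. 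Summing the two contributions then yields $\|r_1^\eps\|_{L^2(G)}\leq C$ uniformly in $\eps$. The treatment of $r_2^\eps$ is strictly parallel, Taylor-expanding $a(x(\delta_{\sqrt\eps}w)^{-1})$ in $w$ at $1_G$ (using $a\in C_c^\infty$) and noting that the Schwartz decay of $\kappa_{x_0\delta_{\sqrt\eps}x}$ in $w$ ensures that multiplication by polynomials in $w$ and subsequent integration still produce bounded operators; the same crude Cauchy--Schwarz bound handles the remainder. The main obstacle is the careful bookkeeping of the stratified homogeneous weights (so as not to overlook terms of mixed type such as $\eps z_k$ appearing at the same order as $\eps v_iv_j$), together with the observation that the Taylor expansion must be pushed up to order $N\geq d$ -- rather than just the first-order expansion suggested by \eqref{e:aA}--\eqref{est:Aeps} -- in order that the $\eps^{-d/2}$ prefactor be absorbed by the crude bound on the remainder; an equivalent route would be to extract directly the full asymptotic expansion stated in Remark~\ref{rem:asympexpansion}.
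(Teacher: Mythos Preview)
Your treatment of $r_1^\eps$ is correct: expanding $\kappa_{x_0\delta_{\sqrt\eps}x}(w)$ in stratified Taylor series at $x_0$ produces main terms $a(x)\,v^\alpha z^\beta\,B_{\alpha,\beta}$ with $B_{\alpha,\beta}$ genuinely $x$-independent, so each contribution to $r_1^\eps$ is a wave packet in the rescaled frame and the norm computation of Section~\ref{s:norm} applies; the crude Cauchy--Schwarz bound on the order-$N$ remainder then closes once $N\geq d$.

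There is, however, a gap in your treatment of $r_2^\eps$. After Taylor-expanding only $A(x,\delta_{\sqrt\eps}w)$ in $w$, the main terms take the form
\[
c_{\gamma,\delta}(x)\int_G \kappa_{x_0\delta_{\sqrt\eps}x}(w)\,w_{\mathfrak v}^\gamma w_{\mathfrak z}^\delta\,\pi^{\lambda_0}_{w^{-1}}\,dw,
\]
and the operator given by this integral still depends on $x$ (through the subscript of $\kappa$), contrary to the ``fixed bounded operator'' structure you announced. This matters: the computation of Section~\ref{s:norm} rests on the identity~\eqref{e:intPhi}, which requires the harmonics to be independent of the integration variable $x_{\mathfrak v}$; with $x$-dependent harmonics, after the change of variable the $x_{\mathfrak v}$-integral runs over a region of volume $\sim\eps^{-d}$ and a mere $L^\infty$ bound on the integrand is useless (and your crude bound does not help either, since the leading main terms carry no positive power of $\eps$). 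The fix is immediate: also expand $\kappa_{x_0\delta_{\sqrt\eps}x}(w)$ in $x$, exactly as you did for $r_1^\eps$, so that every main term carries an $x$-independent operator; equivalently, perform the joint expansion~\eqref{expansion:Q} of $Q^\eps(x)$ from the start rather than splitting into $r_1^\eps+r_2^\eps$.

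With this repair your route is valid and genuinely different from the paper's. The paper does not push the Taylor expansion beyond first order; instead it performs the change of variable~\eqref{eq:boundtoto} and then, for $r_2^\eps$, exploits the commutator identities~\eqref{est:growth} to convert polynomial weights $|x_{\mathfrak v}|^\alpha$ into harmless operators on the Schwartz vectors $\Phi_1,\Phi_2$, producing decay in $x_{\mathfrak v}$ directly. Your approach trades this analytic device for more algebra: pushing the expansion to homogeneous order $N\geq d$ lets the crude remainder bound absorb the $\eps^{-d/2}$ prefactor. The benefit is that your method simultaneously yields the full asymptotic expansion of Remark~\ref{rem:asympexpansion}; the cost is the heavier bookkeeping and the need to patch the $r_2^\eps$ step above.
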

Applying \eqref{e:carac} to the first term in the right hand side of \eqref{e:devQ}, we see that Lemma \ref{l:unifbounded} implies Lemma \ref{lem:pseudo}.
\end{proof}

\begin{proof}[Proof of Lemma \ref{l:unifbounded}]
The idea is that, for $j=1,2$, there holds  $r_j^\eps(x)= \eps^{-d/2}  \widetilde r_j^\eps(\delta_{\eps^{-1/2}} (x_{\mathfrak v}), x_{\mathfrak z}, x)$ with
$$y\mapsto \widetilde r_j^\eps (y_{\mathfrak v}, y_{\mathfrak z}, x)$$
that is in $L^2(G)$, uniformly with respect to $\eps$, with continuity of the map $x\mapsto \widetilde r_j^\eps (\cdot,\cdot, x)$.

 \medskip

With this idea in mind, we write, for $j=1,2$,
\begin{align}
\nonumber
\| r_j^\eps\|_{L^2(G)}^2 &
  =   |\lambda_0|^{d}\eps ^{-d} \int_G   \left| \left( R_j^\eps(x)  \Phi_1,
(\pi^{\lambda_0}_{\delta_{\eps^{-1/2} }(x)})^*\Phi_2\right)\right|^2 dx\\
\label{eq:boundtoto}
&= |\lambda_0|^{d} \int_G\left|\left( R_j^\eps(\delta_{\eps^{1/2}} (x_{\mathfrak v})x_{\mathfrak z})  \Phi_1,
(\pi^{\lambda_0}_{x_{\mathfrak v}})^*\Phi_2\right)\right|^2 dx_{\mathfrak v} dx_{\mathfrak z}.
\end{align}

Let us first deal with $r_1^\eps$. Writing a Taylor formula, we notice that
\begin{align*}
R_1^\eps(\delta_{\eps^{1/2}} (x_{\mathfrak v})x_{\mathfrak z})&=\eps^{-1/2}  \int_{G}  (\kappa_{x_0\delta_{\eps}(x_{\mathfrak v}) \delta_{\sqrt\eps}(x_{\mathfrak z})}(w)-\kappa_{x_0}(w))a(x) \pi_{ w^{-1}  }^{\lambda_0} dw \nonumber \\
&=\sqrt\eps \int_G B(x,w)a(x)\pi_{ w^{-1}  }^{\lambda_0} dw \nonumber
\end{align*}
where $(x,w)\mapsto B(x,w)$ is continuous and compactly supported in $w$. Therefore $R_1^\eps(\delta_{\eps^{1/2}} (x_{\mathfrak v})x_{\mathfrak z})$ is a bounded operator for any $x\in G$. Since $a$ is compactly supported, it implies that $(r_1^\eps)_{\eps>0}$ is uniformly bounded in $L^2(G)$.

\medskip

Let us now deal with $r_2^\eps$. We are going to use that for all multi-indexes $\alpha\in\N^{2d}$,  the map
\begin{equation} \label{e:map}
x\mapsto x_{\mathfrak v}^\alpha  \left( R_2^\eps(\delta_{\eps^{1/2}} (x_{\mathfrak v})x_{\mathfrak z})  \Phi_1,
(\pi^{\lambda_0}_{x_{\mathfrak v}})^*\Phi_2\right)
\end{equation}
is uniformly bounded and has compact support in $x_{\mathfrak z}$. Let us first prove these properties.

By assumption on the support of $\kappa_x$, we know that the $w$'s contributing to the integral defining $R_2^\eps(x)$ are contained in a compact set (independent of $x$). Then, using \eqref{e:aA} and the fact that $a$ has compact support, we obtain that $R_2^\eps$ has compact support. It follows that the map \eqref{e:map} has compact support in $x_{\mathfrak z}$, i.e., there exists $R_0>0$ such that $|x_{\mathfrak z}|\leq R_0$ for all $x$ that are in the support of $R_2^\eps(\delta_{\eps^{1/2}} (x_{\mathfrak v})x_{\mathfrak z})$. Because of~\eqref{est:Aeps} and because the integral is compactly supported in~$w$, $R_2^\eps(x)$
is a bounded operator for all $x\in G$. Besides, the bound is uniform  since $x$ belongs to a compact set.
Therefore, there exists a constant $C_0>0$ such that
$$\left| \left( R_2^\eps(\delta_{\eps^{1/2}} (x_{\mathfrak v})x_{\mathfrak z})  \Phi_1,
(\pi^{\lambda_0}_{x_{\mathfrak v}})^*\Phi_2\right)\right|\leq C_0 {\bf 1}_{x_{\mathfrak z} \leq R_0} (x).$$

One now wants to prove also decay at infinity in $x_{\mathfrak v}$. For this, we
 use the relations~\eqref{est:growth} and the fact that $\Phi_1$ and $\Phi_2$ are in the Schwartz class to absorb the factor $|x_{\mathfrak v}|$ in the right part of the scalar product. Therefore, for all $\alpha\in \N$,  there exists $C_\alpha$ such that
 $$|x_{\mathfrak v}|^\alpha \left|  \left( R_2^\eps(\delta_{\eps^{1/2}} (x_{\mathfrak v})x_{\mathfrak z})  \Phi_1,
(\pi^{\lambda_0}_{x_{\mathfrak v}})^*\Phi_2\right)\right|\leq C_\alpha {\bf 1}_{x_{\mathfrak z} \leq R_0} (x).$$
As a conclusion,  there exists $C>0$ such that
$$\int_G\left|\left( R_2^\eps(\delta_{\eps^{1/2}} (x_{\mathfrak v})x_{\mathfrak z})  \Phi_1,
(\pi^{\lambda_0}_{x_{\mathfrak v}})^*\Phi_2\right)\right|^2 dx_{\mathfrak v} dx_{\mathfrak z} \leq C \int {\bf 1}_{|x_{\mathfrak z} |\leq R_0} (1+|x_{\mathfrak v}|^2)^{-N} dx_{\mathfrak v} dx_{\mathfrak z}<+\infty$$
by choosing $N$ large enough. This implies the uniform boundedness of the family $(r_2^\eps)$ in $L^2(G)$, which concludes the proof of Lemma \ref{l:unifbounded}.
\end{proof}

Let us now shortly discuss the generalization of this proof in order to obtain an asymptotic expansion at any order, as stated in Remark~\ref{rem:asympexpansion}. The idea is to use a  Taylor expansion at higher order (see Section~3.1.8 of~\cite{FR}). The terms of the expansion~\eqref{expansion:Q} are of the form
$$Q_j(x)= x^\alpha a(x) \int_{G} w^\beta \kappa_{x_0}(w )\pi_{ w^{-1}  }^{\lambda_0} dw$$
where $\alpha$ and $\beta$ are multi-indexes such that the sum of their homogeneous lengths is exactly~$j$.
Denoting by $\Delta_{w^\beta} \sigma(x,\lambda_0)$ the Fourier transform of $w\mapsto w^\beta \kappa_{x_0}(w )$, we obtain
$$Q_j(x)= x^\alpha  a(x) \Delta_{w^\beta} \sigma(x,\lambda_0).$$
Observe that the operator $\Delta_{w^\beta}$ is a difference operator as defined in~\cite{FR}.
It order to justify Remark~\ref{rem:asympexpansion}, one then needs to remark that the rest term produced by the Taylor expansion at order $N$ is of the form
$$r^\eps_N(x)=
 |\lambda_0|^{d/2}\eps^{-d/2} \left( R^\eps_N(x)  \Phi_1,
(\pi^{\lambda_0}_{\delta_{\eps^{-1/2} }(x)})^*\Phi_2\right)$$
and
$$
R^\eps_N(x)=\eps^{-\frac {N+1}2}  \int_{G}  \kappa_{x_0\delta_{\sqrt\eps}  x}(w )
A_{N+1} (x, \delta_{\sqrt\eps} w)
\pi_{ w^{-1}  }^{\lambda_0} dw$$
where $A_{N+1}$ satisfies convenient bounds so that an argument similar to the preceding one can be worked out. We do not develop the argument further because we do not need such a precise estimate for our purpose.

\subsubsection{Semi-classical measure}
We can now deduce~\eqref{eq:WPmeasures} from  Lemma~\ref{lem:pseudo} and the following lemma.

\begin{lemma}\label{lem:WP2}
Let $(x_0,\lambda_0)\in G\times ( \mathfrak  z^*\setminus\{0\})$ $a,b\in\mathcal C^\infty_c(\mathcal B)$ where $\mathcal B$ is a fundamental domain of $M$, and $\Phi_1,\Phi_2, \Psi_1,\Psi_2\in\mathcal S(\R^p)$.
Then
$$\left(WP^\eps_{x_0,\lambda_0} (a,\Phi_1,\Phi_2),WP^\eps_{x_0,\lambda_0} (b,\Psi_1,\Psi_2)\right)_{L^2(M)}=( \Phi_1,\Psi_1)\overline{(\Phi_2,\Psi_2)}
 \int_{G_{\mathfrak z}} a(x_{\mathfrak z}) \overline{b(x_{\mathfrak z}) }  dx_{\mathfrak z}
 +O(\sqrt\eps) $$
\end{lemma}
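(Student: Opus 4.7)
The plan is to reduce the $L^2(M)$ inner product to a scalar integral on $G$ and extract the limit via the orthogonality relation~\eqref{e:intPhi}. Since $\eps_0$ is chosen so that the unperiodized wave packets are supported in a single fundamental domain (Remark~\ref{rem:toto}(1)), translating by $y=x_0^{-1}x$ reduces the $L^2(M)$ product to
\begin{equation*}
|\lambda_\eps|^d\eps^{-p}\int_G a_\eps(y)\overline{b_\eps(y)}\,(\pi^{\lambda_\eps}_{y_{\mathfrak v}}\Phi_1,\Phi_2)\overline{(\pi^{\lambda_\eps}_{y_{\mathfrak v}}\Psi_1,\Psi_2)}\,dy;
\end{equation*}
the scalar phase $e^{i\lambda_\eps(Z)}$ arising from the central factor $\pi^{\lambda_\eps}_{y_{\mathfrak z}}$ (Schur's lemma) cancels between the two matrix coefficients.

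Next, I would perform the two-scale change of variable $y_{\mathfrak v}=\delta_\eps(w)$, $y_{\mathfrak z}=\delta_{\sqrt\eps}(\tilde y_{\mathfrak z})$. The Jacobians multiply to $\eps^{2d+p}$, turning the prefactor into $|\lambda_0|^d$; the representation scaling identity $\pi^{\lambda}_{\delta_t(x)}=\pi^{t^2\lambda}_{x}$ combined with $\lambda_\eps=\lambda_0/\eps^2$ yields $\pi^{\lambda_\eps}_{\delta_\eps(w)}=\pi^{\lambda_0}_w$; and the profile rewrites as $a(\delta_{\sqrt\eps}(w)\tilde y_{\mathfrak z})$. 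Because $\tilde y_{\mathfrak z}$ is central, Baker--Campbell--Hausdorff trivializes and gives $\delta_{\sqrt\eps}(w)\tilde y_{\mathfrak z}={\rm Exp}(\sqrt\eps W+Z)$ with $w={\rm Exp}(W)$ and $\tilde y_{\mathfrak z}={\rm Exp}(Z)$, so a first-order Taylor expansion produces $a(\delta_{\sqrt\eps}(w)\tilde y_{\mathfrak z})=a(\tilde y_{\mathfrak z})+O(\sqrt\eps|W|)$. The leading term factorizes: the $\tilde y_{\mathfrak z}$-integral gives $\int_{G_{\mathfrak z}} a(\tilde y_{\mathfrak z})\overline{b(\tilde y_{\mathfrak z})}\,d\tilde y_{\mathfrak z}$, and the $w$-integral, normalized by $|\lambda_0|^d$, is exactly $(\Phi_1,\Psi_1)\overline{(\Phi_2,\Psi_2)}$ by~\eqref{e:intPhi}, which is the announced main term.

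The main obstacle is to bound the $O(\sqrt\eps)$ remainder uniformly in $\eps$: the support in $w$ of $a(\delta_{\sqrt\eps}(\cdot)\tilde y_{\mathfrak z})$ has diameter $\sim\eps^{-1/2}$, so the extra weight $|W|$ from the Taylor remainder cannot be absorbed by compact support alone. For this, I would invoke the identities~\eqref{est:growth}, which exchange the $(p,q)$-coordinates of $w$ for commutators with the operators $\xi_j$ or $i\partial_{\xi_j}$ acting on $\Phi_1$ or $\Psi_1$. Since $\Phi_1,\Psi_1\in\mathcal S(\R^d)$, these commutators produce new Schwartz vectors, and Cauchy--Schwarz combined with~\eqref{e:intPhi} applied to the modified data yields
\begin{equation*}
\int_{G_{\mathfrak v}}|W|\,|(\pi^{\lambda_0}_w\Phi_1,\Phi_2)(\pi^{\lambda_0}_w\Psi_1,\Psi_2)|\,dw<\infty,
\end{equation*}
while the $\tilde y_{\mathfrak z}$-integration runs over the bounded $\mathfrak z$-projection of $\operatorname{supp}(a)$. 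This yields a uniform constant times $\sqrt\eps$, controlling the remainder and completing the argument.
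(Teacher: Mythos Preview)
Your proposal is correct and follows essentially the same route as the paper: reduce the $L^2(M)$ pairing to an integral on $G$, perform the change of variables that turns $\pi^{\lambda_\eps}$ into $\pi^{\lambda_0}$ and the profile into $a(\delta_{\sqrt\eps}(w)\,\tilde y_{\mathfrak z})$, Taylor-expand in $\sqrt\eps$, identify the leading term via \eqref{e:intPhi}, and control the remainder using \eqref{est:growth}. The only cosmetic difference is that the paper expands the product $a\overline b$ directly and writes the remainder as $\sqrt\eps\sum_j v_j r_j$ with bounded $r_j$, whereas you expand $a$ alone and phrase the final bound via Cauchy--Schwarz; both yield the same estimate.
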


\begin{proof}
Define  $u^\eps= WP^\eps_{x_0,\lambda_0} (a,\Phi_1,\Phi_2)$ and $v^\eps= WP^\eps_{x_0,\lambda_0} (b,\Psi_1,\Psi_2)$  the wave packets in $G$. We first use that
$$\left(WP^\eps_{x_0,\lambda_0} (a,\Phi_1,\Phi_2),WP^\eps_{x_0,\lambda_0} (b,\Psi_1,\Psi_2)\right)_{L^2(M)}=(u^\eps,v^\eps)_{L^2(G)}.$$
Besides, 
\begin{align*}
(u^\eps,v^\eps)_{L^2(G)} & = |\lambda_\eps|^d \eps^{-p} \int _G a_\eps (x_0^{-1} x) \overline b(x_0^{-1} x) (\pi^{\lambda_\eps}_{x_0^{-1} x}\Phi_1,\Phi_2) \overline{  (\pi^{\lambda_\eps}_{x_0^{-1} x}\Psi_1,\Psi_2) } dx\\
&= |\lambda_0|^d \int_G a\left(\delta_{\sqrt\eps} (x_{\mathfrak v} ) x_{\mathfrak z}\right) 
\overline b  \left(\delta_{\sqrt\eps} (x_{\mathfrak v} ) x_{\mathfrak z}\right) 
(\pi^{\lambda_0}_{x_{\mathfrak v}} \Phi_1,\Phi_2) 
\overline{(\pi^{\lambda_0}_{x_{\mathfrak v}} \Psi_1,\Psi_2) } dx_{\mathfrak v} dx_{\mathfrak z}.
\end{align*}
A Taylor expansion of the map $x\mapsto a(\delta_{\sqrt\eps}(x_{\mathfrak v})x_{\mathfrak z}) \overline{b(\delta_{\sqrt\eps}(x_{\mathfrak v})x_{\mathfrak z}) } $ gives
$$a(\delta_{\sqrt\eps}(x_{\mathfrak v})x_{\mathfrak z}) \overline{b(\delta_{\sqrt\eps}(x_{\mathfrak v})x_{\mathfrak z}) } =
a(x_{\mathfrak z}) \overline{b(x_{\mathfrak z}) }  +\sqrt\eps  \sum_{1\leq j\leq 2d} v_j r_j(x_{\mathfrak z},\delta_{\sqrt\eps}(x_{\mathfrak v}))$$
where $x_{\mathfrak v}={\rm Exp} (\sum_{1\leq j\leq 2d} v_jV_j)$ and with $| r_j(x,w) | \leq C_j$ for some constants $C_j$, $1\leq j\leq 2d$.
We deduce (using~\eqref{est:growth})
\begin{align*}
(u^\eps,v^\eps)_{L^2(G)}&=  |\lambda_0|^d \int_{G_{\mathfrak z}} a(x_{\mathfrak z}) \overline{b(x_{\mathfrak z}) }  dx_{\mathfrak z}
\int_{G_{\mathfrak v}} (\pi^{\lambda_0}_{x_{\mathfrak v}} \Phi_1,\Phi_2) \overline{ (\pi^{\lambda_0}_{ x_{\mathfrak v}} \Psi_1,\Psi_2)} dx_{\mathfrak v} +O(\sqrt\eps)\nonumber \\
&= (\Phi_1,\Psi_1)\overline{(\Phi_2,\Psi_2)}\int_{G_{\mathfrak z}} a(x_{\mathfrak z}) \overline{b(x_{\mathfrak z}) }  dx_{\mathfrak z} +O(\sqrt \eps), \nonumber
\end{align*}
where the second line follows from \eqref{e:intPhi}.
\end{proof}
Here again, the reader will observe that the expansion can be pushed at any order.

\medskip

It follows from Lemma \ref{lem:pseudo} and Lemma \ref{lem:WP2} that
\begin{align*}
&({\rm Op}_\eps(\sigma) WP^\eps_{x_0,\lambda_0}(a,\Phi_1,\Phi_2), WP^\eps_{x_0,\lambda_0}(a,\Phi_1,\Phi_2))\nonumber \\
&\qquad \qquad = (WP^\eps_{x_0,\lambda_0}(a,\sigma(x_0,\lambda_0)\Phi_1,\Phi_2), WP^\eps_{x_0,\lambda_0}(a,\Phi_1,\Phi_2))+O(\sqrt\eps)\nonumber \\
&\qquad \qquad =( \sigma(x_0,\lambda_0)\Phi_1,\Phi_1) \|\Phi_2\|^2\int_{G_{\mathfrak z}}|a(x_{\mathfrak z})|^2dx_{\mathfrak z}  +O(\sqrt\eps)\nonumber
\end{align*}
which concludes the proof of Proposition \ref{prop:WP}.

\subsection{End of the proof of Theorem~\ref{t:main}} By the results of Section \ref{s:proofsufficiency}, we only need to prove that if $T\leq T_{\rm GCC}(\overline{U})$, the observability inequality \eqref{obs} does not hold.

We first note that if the observability inequality \eqref{obs} is satisfied for some $T>0$, then there exists $\delta>0$ such that \eqref{obs} also holds in time $T-\delta$. Indeed, if it were not the case, there would exist $u_0^n\in L^2(M)$ such that $\|u_0^n\|_{L^2(M)}=1$ and
\begin{align*}
1=\|u_0^n\|_{L^2(M)}^2&\geq n\int_0^{T-2^{-n}} \left\|  {\rm e}^{it(\frac12 \Delta_M+\mathbb{V})} u_0^n\right\|^2_{L^2(U)} dt\\
&\geq n\int_0^{T} \left\|  {\rm e}^{it(\frac12 \Delta_M+\mathbb{V})} u_0^n\right\|^2_{L^2(U)} dt-\frac{n}{2^n}.
\end{align*}
due to conservation of energy, and \eqref{obs} would not hold in time $T$.

Therefore, we shall assume in the sequel that $T<T_{\rm GCC}(\overline{U})$.

Let $T<T_{\rm GCC}(\overline{U})$ and $(x_0,\lambda_0)\in G\times (\mathfrak z^*\setminus\{0\})$ such that 
\begin{equation}\label{e:hypfermee}
\text{for all $s\in [0,T]$, \  $\Phi_0^s(x_0,\lambda_0)\notin \overline{U}\times \mathfrak z^*$.}
\end{equation}
Let us chose initial data $u^\eps_0$ in~\eqref{e:Schrod} which is a wave packet in~$M$ with harmonics given by the first Hermite function $h_0$:
$$u^\eps_0=WP^\eps_{x_0,\lambda_0}(a,h_0,h_0).$$
As a consequence, the semi-classical measure of $(u^\eps_0)$ is $\Gamma_0(x,\lambda) d\gamma_0$ with $\Gamma_0$ the orthogonal projector on $h_0$ (this is where we use the fact that $h_0$ is the first Hermite function) and
$$ \gamma_0(x,\lambda)=c \,  \delta(x-x_0)\otimes \delta(\lambda-\lambda_0)$$
where $c=\limsup \| u^\eps_0\|_{L^2(M)}>0 $.
  Let us denote by $u^\eps(t)$ the associated solution, $u^\eps(t)=  {\rm e}^{it(\frac12 \Delta_M+\mathbb{V})} u^\eps_0$. By Proposition~\ref{p:measure0}, any of its semi-classical measures $\Gamma_t d\gamma_t$ decomposes above $G\times \mathfrak z^*$ according to the eigenspaces of $H(\lambda)$ following~\eqref{eq:decomp}.
Moreover, by Proposition~\ref{p:measure0},
the maps $(t,x,\lambda)\mapsto \Gamma_{n,t}(x,\lambda) d\gamma_t(x,\lambda)$ are continuous and satisfy the transport equation~\eqref{transport}.
We deduce  that for $n\not=0$, $\Gamma_{n,t}(x,\lambda)=0$,
\begin{equation}\label{eq:gammat}
\gamma_t (x,\lambda)= c \, \delta\left( x-{\rm Exp} \left(t\frac d2\mathcal Z^{(\lambda)} \right)x_0\right)\otimes \delta(\lambda-\lambda_0)
\end{equation}
and $\Gamma_{0}$ is the orthogonal projector on $h_0$.

As a consequence of the conservation of the $L^2$-norm by the Schr\"odinger equation,
$\| u^\eps(t) \|_{L^2(M)}=\| u^\eps_0\|_{L^2(M)} $. Besides, the $\eps$-oscillation (see Proposition~\ref{prop:eops}) gives that, for the subsequence defining~$\Gamma_t d\gamma_t$,
$$\lim_{\eps \rightarrow 0}\| u^\eps(t) \|^2_{L^2(M)}
= \int_{M\times \widehat G} {\rm Tr} (\Gamma_t(x,\lambda)) d\gamma_t(x,\lambda),\;\;\forall t\in\R.$$
We deduce that we have, for any $t\in\R$,
$$\int_{M\times \widehat G} {\rm Tr} (\Gamma_t(x,\lambda)) d\gamma_t(x,\lambda)=
\int_{M\times \widehat G} {\rm Tr} (\Gamma_0(x,\lambda)) d\gamma_0(x,\lambda).$$
On the other hand, the positivity of the measure $ {\rm Tr} (\Gamma_t(x,\lambda)) d\gamma_t(x,\lambda)$ combined with~\eqref{eq:gammat} gives
\begin{align*}\int_{M\times \widehat G} {\rm Tr} (\Gamma_t(x,\lambda)) d\gamma_t(x,\lambda)\geq
\int_{M\times \mathfrak z^*} {\rm Tr} (\Gamma_t(x,\lambda) )d\gamma_t(x,\lambda)&=
\int_{M\times \mathfrak z^*} {\rm Tr} (\Gamma_0(x,\lambda) )d\gamma_0(x,\lambda)\\
&=\int_{M\times \widehat G} {\rm Tr} (\Gamma_0(x,\lambda)) d\gamma_0(x,\lambda).
\end{align*}
We deduce that $\gamma_t {\bf 1}_{\mathfrak v^*}=0$. Now, using \eqref{e:hypfermee}, there exists a continuous function $\phi:M\rightarrow [0,1]$ such that $\phi( \Phi_0^s(x_0,\lambda_0))=0$ for any $s\in [0,T]$ and $\phi=1$ on $\overline{U}\times \mathfrak z^*$. Using Proposition \ref{prop:eops} for the subsequence defining the semi-classical measure $\Gamma_t d\gamma_t$, we get
\begin{align*}
0\leq \int_0^{T} \int_U | u^\eps(t,x)|^2 dx dt \leq \int_0^{T} \int_M \phi(x)| u^\eps(t,x)|^2 dx dt \Tend{\eps}{0} & \int_0^{T} \int_{M\times \mathfrak z^*} \phi(x)d\gamma_t (x,\lambda) dt=0.
\end{align*}
Therefore, the observability inequality \eqref{obs} cannot hold.
\begin{remark} \label{r:grazing}
As already noticed in the introduction, it can happen that $T_{\rm GCC}(\overline{U})<T_{\rm GCC}(U)$, and in this case, Theorem \ref{t:main} does not say anything about observability for times $T$ such that $T_{\rm GCC}(\overline{U})<T\leq T_{\rm GCC}(U)$. This is due to the possible existence of grazing rays, which are rays which touch the boundary $\partial U$ without entering the interior of $U$. This phenomenon already occurs in the context of the observability of Riemannian waves, as was shown for example in \cite[Section VI.B]{Leb2}. The example given in this paper is the observation of the wave equation in the unit sphere~$\mathbb{S}^2$ from its (open) northern hemisphere: although the GCC condition is violated by the geodesic following the equator, observability holds in time $T>\pi$. Intuitively, even wave packets following this geodesic have half of their energy located on the northern hemisphere.
\end{remark}


\appendix

\section{Representations of $H$-type groups} \label{a:rep}
In this Appendix, we provide a proof of the description \eqref{eq_widehatG} of $\widehat{G}$. This material is standard in non-commutative Fourier analysis, see for example \cite{CG}.

\subsection{The orbits of $\mathfrak g$}
As any group, a nilpotent connected, simply connected Lie group acts on itself by the inner automorphism $i_x: y\mapsto xyx^{-1}$. With this action, one derives the  action of
$G$ on its  Lie algebra $\mathfrak g$ called  the {\it adjoint map}
$$\begin{array} {rccc}
{\rm Ad} :& G &\rightarrow  &{\rm Aut} (\mathfrak g)\\
 & x & \mapsto &{\rm Ad}_x= d(i_x)_{| 1_G},
 \end{array}$$
and its action on $\mathfrak g^*$, the {\it co-adjoint map}
 $$
 \begin{array} {rccc}
{\rm Ad}^* :& G &\rightarrow  &{\rm Aut} (\mathfrak g^*)\\
 & x & \mapsto &{\rm Ad}^*_x
 \end{array}$$
 defined by
$$\forall x\in G, \;\;\forall \ell \in \mathfrak g^*,\;\; \forall Y\in \mathfrak g,\;\;  ({\rm Ad}^*_x \ell )(Y)=\ell({\rm Ad}_x^{-1} Y).$$
 It turns out that the orbits of this action play an important role in the representation theory of the group.
 Let us recall that the orbit of an element $\ell \in \mathfrak g^*$ is the set $\mathcal O_\ell$ defined by
 $$\mathcal O_\ell = \{ {\rm Ad}^*_x(\ell),\;\; x\in G\}.$$
 The next proposition describes the orbits of $H$-type groups.

\begin{proposition}
Let $G$ be a H-type group, then there are only two types of orbits.
\begin{enumerate}
\item[(i)] $0$-th. dimensional orbits. If $\ell \in \mathfrak v^*$, then $\mathcal O_\ell = \{\ell\}$.
\item [(ii)] $2d$-th. dimensional orbits. If $\ell = \omega +\lambda$ with $\omega\in \mathfrak v^*$ and $\lambda\in\mathfrak z^*\setminus\{0\}$, then $\mathcal O_\ell =\mathcal O_\lambda$ and
$$\mathcal O_\lambda = \{ \omega' + \lambda,\;\;\omega'\in \mathfrak v^*\}.$$
\end{enumerate}
\end{proposition}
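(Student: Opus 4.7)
The plan is to compute $\mathrm{Ad}^*_{x}(\ell)$ explicitly using the 2-step nilpotency of $\mathfrak{g}$, and then to use the nondegeneracy of $J_\lambda$ (guaranteed by the H-type condition $J_\lambda^2 = -|\lambda|^2\mathrm{Id}$) to identify the orbit through $\ell = \omega + \lambda$.

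First, I would reduce the adjoint action to a linear formula. Since $\mathfrak{g}$ is 2-step nilpotent, for every $X\in\mathfrak{g}$ we have $\mathrm{ad}_X^2 = 0$ (because any iterated bracket of length $\geq 2$ falls in $[\mathfrak{g},\mathfrak{z}]=0$). Therefore, using the exponential parametrization $x=\mathrm{Exp}(X)$,
\begin{equation*}
\mathrm{Ad}_{\mathrm{Exp}(X)}(Y) = e^{\mathrm{ad}_X}(Y) = Y + [X,Y],
\end{equation*}
and dualizing,
\begin{equation*}
(\mathrm{Ad}^*_{\mathrm{Exp}(X)}\ell)(Y) = \ell(Y) - \ell([X,Y]), \qquad Y\in\mathfrak{g}.
\end{equation*}
Since $[X,Y]\in\mathfrak{z}$ always, only the $\mathfrak{z}$-component of $\ell$ contributes to the correction term.

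For (i), $\ell\in\mathfrak{v}^*$ means $\ell$ vanishes on $\mathfrak{z}$, so $\ell([X,Y])=0$ for all $X,Y$, giving $\mathrm{Ad}^*_x\ell=\ell$ and $\mathcal{O}_\ell=\{\ell\}$.

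For (ii), write $\ell = \omega+\lambda$ with $\omega\in\mathfrak{v}^*$ and $\lambda\in\mathfrak{z}^*\setminus\{0\}$. Decompose $X=U+Z$ with $U\in\mathfrak{v}$, $Z\in\mathfrak{z}$; since $\mathfrak{z}$ is central, $[X,Y]=[U,Y_{\mathfrak{v}}]$ where $Y_{\mathfrak{v}}$ is the $\mathfrak{v}$-part of $Y$. Using the defining relation \eqref{e:Jlambda} of $J_\lambda$ and the identification $\mathfrak{v}^*\simeq\mathfrak{v}$ via $\langle\cdot,\cdot\rangle$,
\begin{equation*}
\lambda([U,Y_{\mathfrak{v}}]) = \langle J_\lambda(U), Y_{\mathfrak{v}}\rangle,
\end{equation*}
so that
\begin{equation*}
\mathrm{Ad}^*_{\mathrm{Exp}(X)}(\omega+\lambda) = \bigl(\omega - J_\lambda(U)^\flat\bigr) + \lambda,
\end{equation*}
where $(\cdot)^\flat$ denotes the musical isomorphism $\mathfrak{v}\to\mathfrak{v}^*$. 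In particular the $\mathfrak{z}^*$-component is preserved, showing $\mathcal{O}_\ell\subset \mathfrak{v}^*+\lambda$.

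The main (and only nontrivial) step is surjectivity: as $U$ ranges over $\mathfrak{v}$, does $\omega - J_\lambda(U)^\flat$ cover all of $\mathfrak{v}^*$? This is where the H-type hypothesis enters crucially: $J_\lambda^2 = -|\lambda|^2 \mathrm{Id}$ with $\lambda\neq 0$ forces $J_\lambda$ to be invertible on $\mathfrak{v}$ (with inverse $-|\lambda|^{-2}J_\lambda$). Hence $U\mapsto J_\lambda(U)$ is a bijection of $\mathfrak{v}$, and $\omega' := \omega - J_\lambda(U)^\flat$ ranges over all of $\mathfrak{v}^*$. This yields $\mathcal{O}_{\omega+\lambda} = \{\omega'+\lambda : \omega'\in\mathfrak{v}^*\}$, which depends only on $\lambda$, justifying the notation $\mathcal{O}_\lambda$ and showing the orbit has dimension $\dim\mathfrak{v}=2d$. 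This also shows that for general nilpotent 2-step Lie groups one would only get that the orbit is an affine subspace of $\mathfrak{v}^*+\lambda$ of dimension equal to the rank of $J_\lambda$; it is the H-type condition that makes this rank maximal.
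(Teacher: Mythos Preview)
Your proof is correct and follows essentially the same approach as the paper: compute the coadjoint action explicitly using the 2-step nilpotency (so that $\mathrm{Ad}_{\mathrm{Exp}(X)} = \mathrm{Id} + \mathrm{ad}_X$), observe that the $\mathfrak{z}^*$-component $\lambda$ is preserved, and then invoke the invertibility of $J_\lambda$ for $\lambda\neq 0$ to obtain surjectivity onto $\mathfrak{v}^*+\lambda$. Your write-up is in fact a bit more careful than the paper's (which contains a harmless sign slip in its formula for $\mathrm{Ad}_x^{-1}$), and your closing remark about the general 2-step case---where the orbit dimension equals the rank of $J_\lambda$---is a nice addition that the paper does not make explicit.
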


\begin{proof}
Let $x={\rm Exp} (  V_x+ Z_x)\in G$ and $y={\rm Exp} ( V_y+ Z_y)\in G$. Then
\begin{align*}
i_x (y)& = x y x^{-1} = {\rm Exp} (  V_x+Z_x){\rm Exp} (V_y+Z_y) {\rm Exp} (-  V_x - Z_x)\\
&= {\rm Exp} ( V _y+ Z_y+[V_x,V_y] ).
\end{align*}
 We deduce that if  $Y= V_Y+Z_Y\in\mathfrak g$,
$${\rm Ad}_x^{-1} (Y)= V_Y + Z_Y+[V_x,V_Y].$$
Therefore, if  $\ell=  \omega+ \lambda$ with $\lambda \in \mathfrak z^*$ and $\omega \in \mathfrak v^*$,
$${\rm Ad}_x^*\ell (Y)= \langle \ell,  {\rm Ad}_x^{-1} (Y)\rangle =\langle \omega, V_Y\rangle +\langle \lambda, Z_Y+[V_x,V_Y]\rangle =\langle \omega +J_\lambda( V_x),V_Y\rangle  +\langle \lambda, Z_Y\rangle $$
As a consequence, if $\lambda=0$, ${\rm Ad}_x^*\ell (Y)=\ell(Y)$ for all $Y\in\mathfrak g$. We deduce ${\rm Ad}_x^*\ell=\ell $ for all $x\in G$, which gives the first type of orbits. \\
If now $\lambda\not=0$ and if $\omega'\in \mathfrak v^* $, one can find $V_x\in \mathfrak v$ such that
$$\langle \omega', V\rangle =\langle \omega+J_\lambda( V_x),V\rangle,\;\;\forall V\in \mathfrak v.$$
One deduces that for all $Y\in \mathfrak g$, ${\rm Ad}_x^*\ell (Y)= \ell'(Y)$ with $\ell'= \omega'+\lambda$.
We deduce that any of these~$\ell'$ is in the orbit of $\ell$, which concludes the proof.
 \end{proof}

 Let $\lambda\in \mathfrak z^*\setminus\{0\}$, the sets $\mathfrak p_\lambda\oplus \mathfrak z $ and $\mathfrak q_\lambda \oplus \mathfrak z $ are maximal isotropic  sub-algebras of $\mathfrak g$ for the bilinear map $B(\lambda)$ (with associated endomorphism $J_\lambda$). Such an algebra is said to be a {\it polarizing algebra} of $\mathfrak g$. We shall use these algebras in the next section.

 \subsection{Unitary irreducible representations of $G$} The {\it unitary representations} of a locally compact group are homomorphisms~$\pi$ of $G$ into the group of unitary operators on a Hilbert space that are continuous for the strong topology. The representations for which there is no proper closed $\pi(G)$-invariant subspaces in ${\mathcal H}_\pi$ are called {\it irreducible}. Arbitrary representations can be uniquely decomposed as sums of irreducible representations.

 \medskip

Kirillov theory establishes a one to one relation between the orbits $(\mathcal O_\ell) _{\ell \in \mathfrak g^*}$ and the irreducible unitary representations of $G$ for any nilpotent Lie group which is  connected and locally connected. We shall first explain how one associates to an orbit $\mathcal O_\ell$ a  representation~$\pi_\ell$ (which only depends on the class of the orbit $\mathcal O_\ell$). Then, in the next subsection, we shall explain how the Stone-Von Neumann Theorem implies that  any representation can be associated with an orbit.

\medskip

$\bullet$ Let $\omega \in \mathfrak v ^*$,  the map $\chi_\omega$ defined below is a $1$-dimensional representation of $G$.
$$\begin{array} {rccc}
\chi_\omega :&G & \rightarrow & {\bf S}^1\\
&{\rm Exp} (X) & \mapsto  &{\rm e}^{i\omega(X)}.
\end{array}$$
Note that $\chi_\omega=\pi^{(0,\omega)}$ as defined in~\eqref{eq:0omega}.

$\bullet$ Let $\lambda\in \mathfrak z^*\setminus\{0\}$.  We consider  the polarizing sub-algebra associated with $\lambda$
$$\mathfrak m_\lambda= \mathfrak q_\lambda \oplus \mathfrak z$$
and the subgroup of $G$ defined by $M:={\rm Exp} (\mathfrak m_\lambda)$. Then, if $\ell\in\mathcal O_\lambda$, $\ell([\mathfrak m_\lambda, \mathfrak m_\lambda])=0$, and  the map
$$\begin{array} {rccc}
\chi_{\lambda,M} :&M & \rightarrow & {\bf S}^1\\
&{\rm Exp} (Y) & \mapsto  &{\rm e}^{i\lambda(Y)}.
\end{array}$$
is a one-dimensional representation of $M$. This allows to construct an induced representation $\pi_\lambda$ on $G$ with Hilbert space $\mathfrak p_\lambda\sim L^2(\R^p)$ via the identification of ${\rm Exp}\left(\sum_{j=1}^d \xi_j P^{(\lambda)}_j\right)\in {\rm Exp}(\mathfrak p_\lambda)$ with $\xi=(\xi_1,\cdots,\xi_d)\in \R^d$.
Indeed, let us take $\xi\in \mathfrak p_\lambda$ and $x={\rm Exp} (X)$, with $X=P+Q+Z$ and $P\in\mathfrak p_\lambda$, $Q\in \mathfrak q_\lambda$ and $Z\in \mathfrak z$. We have, by the Baker-Campbell-Hausdorff formula,
$${\rm Exp}(\xi) {\rm Exp} (X)= {\rm  Exp}(Q+Z+[\xi, Q] +\frac 12[P,Q]) {\rm Exp} (\xi+P),$$
with
$$Q+Z+[\xi, Q] +\frac 12[P,Q]\in \mathfrak m_\lambda\;\;\mbox{ and}\;\;\xi+P\in\mathfrak p_\lambda.$$
 Let us denote by $p,q\in\R^d$
the coordinates of $P$ and $Q$ in the bases $(P^{(\lambda)}_j)_{1\leq j\leq d}$ and $(Q^{(\lambda)}_j)_{1\leq j\leq d}$ respectively. Following~\cite{CG}, we define the induced representation by
 $$\pi_\lambda(x) f(\xi)= \chi_\lambda\Bigl({\rm Exp}(Q+Z+[\xi, Q] +\frac 12[P,Q])\Bigr) f(\xi +p).$$
 Using $\lambda([P_j^{(\lambda)},Q_j^{(\lambda)}])= B(\lambda)(P_j^{(\lambda)},Q_j^{(\lambda)})= |\lambda| $, we obtain
$$\pi_\lambda(x) f(\xi)  = {\rm e}^{i \lambda(Z) +\frac i 2 |\lambda| p\cdot q+i|\lambda| \xi\cdot q} f(\xi+p).$$
We can then use the scaling operator $T_\lambda$ defined by
$$T_\lambda f(\xi)= |\lambda|^{d/4} f(|\lambda|^{1/2}\xi)$$
to get
the equivalent representation $\pi^\lambda_x := T_\lambda  ^* \pi_\lambda(x) T_\lambda$ written in \eqref{def:pilambda}.

 \medskip

This inductive process can be generalized to the case of groups presenting more than two strata. For our purpose,
it remains to prove that any irreducible representation is equivalent to one of those, which is a consequence of the Stone-Von Neumann Theorem.

\subsubsection{Stone-Von Neumann Theorem}
 Let us recall the celebrated Stone-Von Neumann theorem (see \cite[Section 2.2.9]{CG} for a proof).
\begin{theorem}
Let $\rho_1$, $\rho_2$ be two unitary representations of $G=\R^d$ in the same Hilbert space $\mathcal{H}$ satisfying, for some $\alpha\neq 0$, the covariance relation
\begin{equation*}
\rho_1(x)\rho_2(y)\rho_1(x)^{-1}=e^{i\alpha x\cdot y}\rho_2(y), \ \ \text{ for all $x,y\in\R^d$}.
\end{equation*}
Then $\mathcal{H}$ is a direct sum $\mathcal{H}=\mathcal{H}_1\oplus \mathcal{H}_2\oplus\ldots$ of subspaces that are invariant and irreducible under the joint action of $\rho_1$ and $\rho_2$. For any $k$, there is an isometry $J_k:\mathcal{H}_k\rightarrow L^2(\R^d)$ which transforms $\rho_1$ and $\rho_2$ to the canonical actions on $L^2(\R^d)$:
\begin{equation*}
[\widetilde{\rho}_1(x)f](\xi)=f(\xi+x), \ \ [\widetilde{\rho}_2(y)f](\xi)=e^{i\alpha y\cdot \xi}f(\xi).
\end{equation*}
For each $\alpha\neq 0$, the canonical pair $\widetilde{\rho}_1, \widetilde{\rho}_2$ acts irreducibly on $L^2(\R^d)$, so $\rho_1, \rho_2$ act irreducibly on each $\mathcal{H}_k$.
\end{theorem}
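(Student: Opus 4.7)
The plan is to recast the hypothesis in Weyl form and produce a canonical projection from which the decomposition follows. After rescaling $y\mapsto y/\alpha$, one may assume $\alpha=1$. Introduce the operators $W(x,y) := e^{-ix\cdot y/2}\rho_1(x)\rho_2(y)$; the covariance hypothesis is equivalent to the composition rule
$$W(x_1,y_1)\,W(x_2,y_2) = e^{i(x_1\cdot y_2 - x_2\cdot y_1)/2}\,W(x_1+x_2,\,y_1+y_2),$$
together with $W(x,y)^* = W(-x,-y)$. Thus $W$ is a projective unitary representation of $\R^{2d}$ with the standard symplectic cocycle, and verifying this identity from the hypothesis is a short first computation.

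The heart of the argument is the Gaussian-weighted operator
$$P := (2\pi)^{-d}\int_{\R^{2d}} e^{-(|x|^2+|y|^2)/4}\,W(x,y)\,dx\,dy,$$
defined weakly on $\mathcal{H}$. Using the composition rule I would expand $P^2$ as a double integral, change variables into sum and difference coordinates, and perform the Gaussian integral in the difference variables, which collapses the result back to a scalar multiple of $P$. The precise normalization is fixed by testing in the Schr\"odinger model on $L^2(\R^d)$, where a direct calculation identifies $P$ as the orthogonal projection onto the line $\C\Omega$ spanned by the normalized Gaussian $\Omega(\xi) = \pi^{-d/4}e^{-|\xi|^2/2}$. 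This yields that $P$ is a non-zero self-adjoint orthogonal projection on $\mathcal{H}$. Choose an orthonormal basis $\{e_k\}$ of $\mathrm{Range}(P)$ and set $\mathcal{H}_k := \overline{\mathrm{span}}\{W(x,y)e_k : (x,y)\in\R^{2d}\}$. A further use of the composition rule together with $Pe_k = e_k$ gives an inner-product identity
$$\langle W(x,y)e_j,\,W(x',y')e_k\rangle = \delta_{jk}\,K(x-x',\,y-y'),$$
where $K$ is the same Gaussian-times-phase kernel one computes in the Schr\"odinger model; this implies the $\mathcal{H}_k$ are pairwise orthogonal and supplies an isometry $J_k:\mathcal{H}_k\to L^2(\R^d)$ matching $W(x,y)e_k$ to $W_{\mathrm{can}}(x,y)\Omega$.

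Invariance of each $\mathcal{H}_k$ under $\rho_1,\rho_2$ follows from the multiplication rule of $W$. Totality of $\bigoplus_k \mathcal{H}_k$ is proved by contradiction: integrating the composition rule produces a resolution of the identity of the form $(2\pi)^{-d}\int e^{-(|x|^2+|y|^2)/4}\,W(x,y)^*PW(x,y)\,dx\,dy = \Id$, so if $v$ were orthogonal to every $\mathcal{H}_k$, then $Pv\in\mathrm{Range}(P)$ would be orthogonal to every $e_k$, forcing $Pv=0$; applying the same argument to $W(x,y)v\in\bigl(\bigoplus\mathcal{H}_k\bigr)^\perp$ for every $(x,y)$ and invoking the resolution of the identity yields $v=0$. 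Pulling back $J_k$ through $W(x,y) = e^{-ix\cdot y/2}\rho_1(x)\rho_2(y)$ then gives the canonical form of $\widetilde\rho_1,\widetilde\rho_2$ on each summand, and irreducibility of the latter is classical. I expect the main obstacle to be the verification that $P^2=P$ and of the companion resolution-of-identity formula: both reduce to explicit Gaussian computations on $\R^{4d}$ with symplectic phases, and completing the square correctly (rather than invoking the Schr\"odinger model, which would be circular) is the technical core; once these two identities are in hand, orthogonality, unitary equivalence to $L^2(\R^d)$, and totality all follow mechanically from the composition rule.
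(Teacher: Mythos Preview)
The paper does not actually prove this theorem: it states it as the ``celebrated Stone--Von Neumann theorem'' and refers the reader to \cite[Section 2.2.9]{CG} for a proof. So there is nothing in the paper to compare against, and your proposal stands on its own.

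Your approach is essentially von Neumann's original argument via the Weyl operators and the Gaussian projection, and it is sound in outline. The reduction to $\alpha=1$, the Weyl composition law for $W(x,y)$, the identity $P^2=P$ by a Gaussian computation, and the orthogonality of the cyclic subspaces $\mathcal H_k$ via $PW(u,v)P = e^{-(|u|^2+|v|^2)/4}P$ all go through as you indicate. Two small corrections are worth flagging. First, your inner-product identity is not quite of the form $\delta_{jk}K(x-x',y-y')$: the phase in $W(-x',-y')W(x,y)$ depends on $x'\cdot y$ and $x\cdot y'$ separately, not only on the differences, so there is an extra unimodular factor (harmless for orthogonality and for building the isometry, but the formula as written is inaccurate). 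Second, and more substantively, your resolution-of-identity formula is wrong as stated: the correct identity is
\[
(2\pi)^{-d}\int_{\R^{2d}} W(x,y)\,P\,W(x,y)^*\,dx\,dy \;=\; \Id,
\]
with \emph{no} Gaussian weight on the outer integral (the integral converges weakly, not in norm). With the extra factor $e^{-(|x|^2+|y|^2)/4}$ you wrote, the computation yields a Gaussian average of $W$ with parameter $5/4$, which is neither $\Id$ nor a multiple of $P$. Once you drop that weight, the totality argument works exactly as you describe. Finally, your worry about circularity is misplaced: using the Schr\"odinger model to evaluate \emph{scalar} identities (the normalization of $P$, the value of $PW(u,v)P$) is legitimate, since that model exists and satisfies the same Weyl relations; only invoking the \emph{equivalence} to it would be circular.
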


Let $\pi$ be an irreducible representation of $G$  on $\mathcal{H}_\pi$. Our goal is to prove that it is equivalent either to a $\chi_\omega$ or to a $\pi_\lambda$ of the preceding section. For $Z\in\mathfrak z$, the operators $\pi({\rm Exp}(Z))$ commute will all elements of  $\{\pi_g : g\in G\}$. By Schur's Lemma (see \cite[Lemma 2.1.1]{CG}), they are thus scalar: $\pi_{{\rm Exp}(Z)}=\chi({\rm Exp}(Z))\Id_{\mathcal{H}_\pi}$ where $\chi$ is a one-dimensional representation of the center $Z(G)={\rm Exp}(\mathfrak z)$ of $G$. Then, two cases appear:

$\bullet$ If $\chi\equiv 1$, then $\pi$ is indeed a representation of the Abelian quotient group $G/Z(G)={\rm Exp}(\mathfrak v)$, thus it is one-dimensional and of the form $\chi_\omega$ for some $\omega\in \mathfrak v^*$.

$\bullet$ If $\chi\not\equiv 1$, there is $\lambda\in \mathfrak z^*\setminus \{0\}$ such that $\chi({\rm Exp}(Z))=e^{i\lambda(Z)}$. We keep the notations of \eqref{eqxpqz}, the notations $P=p_1P_1^{(\lambda)}+\ldots+p_dP_d^{(\lambda)}$, $Q=q_1Q_1^{(\lambda)}+\ldots+q_dQ_d^{(\lambda)}$ and $Z=z_1Z_1+\ldots+z_pZ_p$ of the previous section, and we set $p=(p_1,\ldots,p_d)$, $q=(q_1,\ldots,q_d)$ and $z=(z_1,\ldots,z_p)$.
The actions of the $d$-parameter subgroups $\rho_1(p)=\pi_{{\rm Exp}(P)}$ and $\rho_2(q)=\pi_{{\rm Exp}(Q)}$ satisfy the covariance relation
\begin{align}
\rho_1(p)\rho_2(q)\rho_1^{-1}(p)\rho_2^{-1}(q)&=\pi_{{\rm Exp}(\frac12(p_1q_1[P_1^{(\lambda)},Q_1^{(\lambda)}]+\ldots+p_dq_d[P_d^{(\lambda)},Q_d^{(\lambda)}]))} \nonumber \\
&=e^{\frac{i}{2}|\lambda|p\cdot q}\Id_{\mathcal{H}_\pi} \nonumber
\end{align}
where we have used $[P_j^{(\lambda)},Q_j^{(\lambda)}]= \mathcal Z^{(\lambda)}$ with $\lambda(\mathcal Z^{(\lambda)})=|\lambda|$.
The joint action of $\rho_1$ and $\rho_2$ is irreducible since the $d$-parameter subgroups generate $G$ and $\pi$ is irreducible. Thus, we may apply the Stone-Von Neumann theorem, which gives that there exists an isometry identifying $\mathcal{H}_\pi$ with $L^2(\R^d)$ such that the actions take the form
\begin{align}
[\rho_1(p)f](t)=[\pi_{{\rm Exp}(P)}f](\xi)=f(\xi+p), \nonumber \\
[\rho_2(q)f](t)=[\pi_{{\rm Exp}(Q)}f](\xi)=e^{i|\lambda| q\cdot \xi}f(\xi)\nonumber
\end{align}
for all $f\in L^2(\R^d)$ and $p,q\in\R^d$.
Hence, in this model, the action of an arbitrary element of $G$ is
\begin{equation*}
[\pi_{{\rm Exp}(P+Q+Z)}f](\xi)=e^{i\lambda(z)+\frac{i}{2}|\lambda|p\cdot q+i|\lambda|q\cdot \xi}f(\xi+p)
\end{equation*}
since ${\rm Exp}(P+Q+Z)={\rm Exp}(Z+\frac{1}{2}[P,Q])\cdot{\rm Exp}(Q)\cdot{\rm Exp}(P)$ by the Baker-Campbell-Hausdorff formula. This is just the action of $\pi_\lambda$ modeled in $L^2(\R^d)$. Thus, an infinite-dimensional irreducible representation $\pi$ is isomorphic to $\pi_\lambda$ for some $\lambda$.


\section{Pseudodifferential operators and semi-classical measures}\label{sec:proofpseudo}

In this Appendix we focus on different aspects of the pseudodifferential calculus on quotient manifolds.

\subsection{Properties of pseudodifferential operators on quotient manifolds}

We prove here properties (3) to  (7) of Section~\ref{sec:semiclas}.

\medskip

\noindent $\bullet$ {\it Proof of Property (3)}. We write
$G=\cup_{\gamma \in \widetilde{\Gamma}} M \gamma^{-1}$
and, using the periodicity of $f$, we obtain
$$
\int_{  G} \kappa_{ x}^\eps(y^{-1} x) f( y)dy=\sum_{\gamma\in \widetilde{\Gamma}} \int _{y\in M \gamma^{-1}} \kappa_{ x}^\eps(y^{-1} x) f( y)dy= \sum_{\gamma\in \widetilde{\Gamma}} \int _{y\in M }  \kappa_{ x}^\eps(\gamma y^{-1} x) f( y)dy.$$
As a consequence, the action of the operator~${\rm Op}_\eps(\sigma)$ writes  as a sum of convolution
$${\rm Op}_\eps(\sigma)f(x) = \sum_{\gamma\in \widetilde{\Gamma}} f*\kappa^\eps_{x}(\gamma\cdot)  (x). $$

 \medskip

\noindent $\bullet$ {\it Proof of Property (4)}. By Young's convolution inequality
$$\| f*  \kappa^\eps_{x}(\gamma\cdot) \|_{L^2(M)}\leq \| \sup_{x\in M}| \kappa^\eps_{x}(\gamma\cdot) | \|_{L^1(M)} \|  f\|_{L^2(M)} .$$
We have
$$ \| \sup_{x\in M} | \kappa^\eps_{x}(\gamma\cdot)|  \|_{L^1(M)}= \eps^{-Q}\int_M \sup_{x\in M} |  \kappa_{x}(\eps\cdot \gamma y) |dy =\int_{\gamma^{-1}M}\sup_{x\in M} |\kappa_x(y)|dy.$$
Therefore
$$\| {\rm Op}_\eps(\sigma)f\|_{L^2(M)} \leq \|  f\|_{L^2(M)} \sum_{\gamma\in \widetilde{\Gamma}} \int_{\gamma^{-1}M}\sup_{x\in M} |\kappa_x(y)|dy= \|f\|_{L^2(M)}\int_{G} \sup_{x\in M} |\kappa_x(y)|dy,$$
which gives~\eqref{eq:boundedness}

 \medskip

\noindent $\bullet$ {\it Proof of Property (5)}. We argue as for the $L^2$ boundedness and observe that the kernel of ${\rm Op}_\eps(\sigma) - {\rm Op}_\eps(\sigma) \chi$
is the function
$$(x,y)\mapsto \kappa^\eps_x(y^{-1} x)(1-\chi)(y).$$
Writing
$$\kappa^\eps_x(y^{-1} x)(1-\chi(y))= \kappa^\eps_x(y^{-1} x)(1-\chi )(x(y^{-1} x)^{-1})$$
we deduce that we can write the operator ${\rm Op}_\eps(\sigma) -  {\rm Op}_\eps(\sigma) \chi$ as the convolution with an $x$-dependent function:
$$({\rm Op}_\eps(\sigma) -    {\rm Op}_\eps(\sigma) \chi )f(x)= \sum_{\gamma\in \widetilde{\Gamma}}f* \theta^\eps (x,\gamma \cdot) $$
with
$\theta^\eps(x,z)= \eps^{-Q} \kappa_x (\eps\cdot z)(1- \chi)( xz^{-1}).$
Therefore, if $K={\rm supp} \, \sigma$ (where $\chi\equiv 1$), we have
$$\| \sup_{x\in K} \theta^\eps(x,\gamma \cdot) \|_{L^1(M)}\leq \int_M\sup_{x\in K} | \kappa_x(\gamma z) |
|(1-\chi)( x(\eps\cdot (\gamma z))^{-1})| dz.$$
A Taylor formula gives that there exists a constant $c>0$ such that for all $x\in K$,
$$|(1-\chi)( x(\eps\cdot (\gamma z))^{-1})| \leq c\eps^N  |\gamma z|^N .$$
Therefore,
$$\| \sup_{x\in K} \theta^\eps(x,\gamma \cdot) \|_{L^1(M)}\leq  c \eps^N \int_M\sup_{x\in K} | \kappa_x(\gamma z) | |\gamma z|^N dz.$$
We deduce thanks to Young's convolution inequality
\begin{align*}
\| ({\rm Op}_\eps(\sigma)(1-   \chi )f\| _{L^2(M)}
\leq \eps^N c  \| f\|_{L^2(M)}  & \sum_{\gamma\in \widetilde{\Gamma}}  \int_M\sup_{x\in K} | \kappa_x(\gamma z) | |\gamma z|^N dz\\
&= \eps^N c  \| f\|_{L^2(M)}   \int_G\sup_{x\in K} | \kappa_x(z) | |z|^N dz.
\end{align*}

\medskip 

\noindent $\bullet$ {\it Proof of Property (6)}.

\begin{proof}[Proof of Proposition~\ref{prop:symbcal}]
We take $f,g\in L^2(M)$. We use a partition of unity 
$\sum_{1\leq j\leq J} \chi_j= 1_{\mathcal B}$ with $\chi_j\in \mathcal C^\infty_0(G)$, compactly supported in a fundamental domain of $M$ (which depends on $j$). We decompose $\sigma$ as 
$$\sigma(x,\lambda)= \sum_{1\leq j\leq J}\sigma_j(x,\lambda),\;\;\sigma_j(x,\lambda)= \chi_j(x) \sigma(x,\lambda),\;\;(x,\lambda)\in G\times \widetilde G;$$
and we consider $\widetilde \chi_j\in\mathcal C^\infty_0(G)$, real-valued, compactly supported in the same  fundamental domain of $M$ as $\chi_j$ with $\widetilde \chi_j=1$ on the support of $\chi_j$. For proving~\eqref{adjoint}, it is enough to prove it for each of the $\sigma_j$.  Besides,  the symbol $\sigma_j$ and the smooth function $\widetilde \chi_j$ satisfy Point (5) and we have 
$$ {\rm Op}_\eps(\sigma_j)= \widetilde \chi_j{\rm Op}_\eps (\sigma_j)\widetilde \chi_j +O(\eps^N)$$
for $N\in\N$ 
in $\mathcal L (L^2(M))$. We will use this property to transform the relations in $L^2(M)$ into relations in $L^2(G)$: 
\begin{align*}
\left({\rm Op}_\eps (\sigma_j)^* f,g\right)_{L^2(M)}&=  \left(  f, {\rm Op}_\eps (\sigma_j) g\right)_{L^2(M)}= \left( \widetilde \chi _j f, {\rm Op}_\eps (\sigma_j) \widetilde \chi_j g\right)_{L^2(G)}+ O(\eps^N \|\widetilde \chi_j f\|_{L^2(G)}  \|\widetilde\chi_j g\|_{L^2(G)})\\
&= \left({\rm Op}_\eps (\sigma_j)^* \widetilde \chi_j f,\widetilde \chi_j g\right)_{L^2(G)} + O(\eps^N \|\widetilde\chi_j f\|_{L^2(G)}  \|\widetilde\chi_j g\|_{L^2(G)})
\end{align*}
We can now use symbolic calculus in $L^2(G)$ and we obtain by
 Proposition~3.6 of~\cite{FF1}, 
\begin{align*}
\left({\rm Op}_\eps (\sigma_j)^* f,g\right)_{L^2(M)}&=  \left({\rm Op}_\eps (\sigma_j^*) \widetilde \chi_j f,\widetilde \chi_j g\right)_{L^2(G)}
-{\eps} ({\rm Op}_\eps(P^{(\lambda)}\cdot \Delta^\lambda_p \sigma_j^*+Q^{(\lambda)}\cdot \Delta^\lambda_q \sigma_j^*)\widetilde \chi_j  f,\widetilde \chi_j  g)_{L^2(G)} \nonumber \\
&\qquad \qquad+O(\eps^2 \|\widetilde\chi_j f\|_{L^2(G)} \|\widetilde \chi_j g\|_{L^2(G)})\nonumber\\
&=  \left({\rm Op}_\eps (\sigma_j^*) f, g\right)_{L^2(M)} -{\eps} ({\rm Op}_\eps( \widetilde \chi_j (P^{(\lambda)}\cdot \Delta^\lambda_p \sigma_j^*+Q^{(\lambda))}\cdot \Delta^\lambda_q \sigma_j^*)) f,g)_{L^2(M)}\nonumber \\
&\qquad \qquad+O(\eps^2 \| f\|_{L^2(M)} \| g\|_{L^2(M)}) \nonumber
\end{align*}
by~\eqref{eq:localisation}.
We now use that  $\widetilde \chi_j \sigma_j=\sigma_j$, whence   $\widetilde \chi_j \sigma_j^*=\sigma_j^*$  and also 
$\widetilde \chi_j \Delta_p^\lambda \sigma_j = \Delta_p^\lambda \sigma_j$, $\widetilde \chi_j  \Delta_q^\lambda \sigma_j = \Delta_q^\lambda \sigma_j
$. Besides since $\widetilde \chi_j=1$ on the support of $\sigma_j$, we deduce 
$$\widetilde \chi_j (P^{(\lambda)}\cdot \Delta^\lambda_p \sigma_j^*+Q^{(\lambda)}\cdot \Delta^\lambda_q \sigma_j^*)
=P^{(\lambda)}\cdot \Delta^\lambda_p \sigma_j^*+Q^{(\lambda)}\cdot \Delta^\lambda_q \sigma_j^*,$$
whence~\eqref{adjoint}.

\medskip 

Let us now prove~\eqref{composition}.  We argue similarly and  write in $\mathcal L (L^2(M))$
$${\rm Op}_\eps(\sigma_1) = \sum_{1\leq j\leq  J} {\rm Op}_\eps( \chi_j\sigma_1) = \sum_{1\leq j\leq  J}  \widetilde \chi_j{\rm Op}_\eps( \chi_j\sigma_1)\widetilde \chi_j +O(\eps^N)$$
for $N\in\N$. Considering $\underline \chi_j$ smooth, real-valued, compactly supported in a fundamental domain and equal to $1$ on the support of  $\widetilde \chi_j$, 
we have 
$$\widetilde \chi_j {\rm Op}_\eps (\sigma_2)= {\rm Op}_\eps (\widetilde \chi_j\sigma_2)= {\rm Op}_\eps (\widetilde \chi_j\sigma_2)\underline\chi_j +O(\eps^N)$$
in $\mathcal L (L^2(G))$ and 
we deduce  that for $1\leq j\leq J$ 
\begin{align*}
\left({\rm Op}_\eps (\chi_j\sigma_1)\circ {\rm Op}_\eps (\sigma_2) f,g\right)_{L^2(M)}
&= \left({\rm Op}_\eps ( \chi_j\sigma_1)\circ { \rm Op}_\eps (\widetilde \chi_j\sigma_2) \underline \chi_jf, \widetilde \chi_jg\right)_{L^2(G)}
+O(\eps^N\| f\|_{L^2(M)}\| g\|_{L^2(M)}) .
\end{align*} 
By symbolic calculus in $G$ 
\begin{align*}
\left({\rm Op}_\eps (\chi_j\sigma_1)\circ {\rm Op}_\eps (\sigma_2) f,g\right)_{L^2(M)}&
 =  \left({\rm Op}_\eps (  \chi_j \sigma_1 \sigma_2 -\eps r) \underline \chi_jf, \widetilde \chi_j g\right)_{L^2(G)} +O(\eps^2\| f\|_{L^2(M)}\| g\|_{L^2(M)}) 
\end{align*} 
with 
$$\displaystyle{r(x,\lambda)= \Delta_p^\lambda (\chi_j \sigma_1) \cdot P^{(\lambda)}\,(\widetilde \chi_j\sigma_2)+\Delta^\lambda_q (\chi_j\sigma_1) \cdot Q^{(\lambda)}\, (\widetilde \chi_j \sigma_2)= 
\chi_j(\Delta_p^\lambda  \sigma_1 \cdot P^{(\lambda)}\,\sigma_2+\Delta^\lambda_q \sigma_1 \cdot Q^{(\lambda)}\, \sigma_2)}$$
where we have used that $\widetilde \chi_j=1$ on the support of $\chi_j$. 
Summing the contributions in $j$, we obtain 
\begin{align*}
&\left({\rm Op}_\eps (\sigma_1)\circ {\rm Op}_\eps (\sigma_2) f,g\right)_{L^2(M)}\\
&\qquad = \sum_{1\leq j\leq J}  \left({\rm Op}_\eps (  \chi_j (\sigma_1 \sigma_2 -\eps (\Delta_p^\lambda  \sigma_1 \cdot P^{(\lambda)}\,\sigma_2+\Delta^\lambda_q \sigma_1 \cdot Q^{(\lambda)}\, \sigma_2)
)) \underline \chi_jf, \widetilde \chi_j g\right)_{L^2(G)}
 \\&\qquad \qquad
+O(\eps^2\| f\|_{L^2(M)}\| g\|_{L^2(M)}) \\
&\qquad  =  \sum_{1\leq j\leq J}  \left({\rm Op}_\eps (  \chi_j (\sigma_1 \sigma_2 -\eps (\Delta_p^\lambda  \sigma_1 \cdot P^{(\lambda)}\,\sigma_2+\Delta^\lambda_q \sigma_1 \cdot Q^{(\lambda)}\, \sigma_2)
) )f,  g\right)_{L^2(M)} \\
&\qquad \qquad+O(\eps^2\| f\|_{L^2(M)}\| g\|_{L^2(M)})
\end{align*}
because both $\underline \chi_j$ and $\widetilde \chi_j$ are equal to $1$ on the support of $\chi_j$. Finally, using $\sum_{1\leq j\leq J} \chi_j=1$, we obtain 
\begin{align*}
\left({\rm Op}_\eps (\sigma_1)\circ {\rm Op}_\eps (\sigma_2) f,g\right)_{L^2(M)}
& =    \left({\rm Op}_\eps (  \sigma_1 \sigma_2 -\eps (\Delta_p^\lambda  \sigma_1 \cdot P^{(\lambda)}\,\sigma_2+\Delta^\lambda_q \sigma_1 \cdot Q^{(\lambda)}\, \sigma_2))
 f,  g\right)_{L^2(M)} \\
 &\qquad +O(\eps^2\| f\|_{L^2(M)}\| g\|_{L^2(M)}),
\end{align*}
whence the result. 
\end{proof}

\medskip 

\noindent $\bullet$ {\it Proof of Property (7)}.

 \begin{proof}[Proof of Proposition~\ref{prop:cutoffkernel}]
 Here again, we reduce by using a partition of unity to the case of $\sigma$ as in (5) above, with a fundamental domain $\mathcal B$ containing ${\bf 1}_G$.  We introduce the associated function $\chi\in{\mathcal C}_c^\infty(\mathcal B)$ such that $\chi \sigma =\sigma$.  We observe that $\chi\sigma_\eps=\sigma_\eps$ and we use Proposition~3.4 of~\cite{FF1} to write for $f,g\in L^2(M)$,
\begin{align*}
\left({\rm Op}_\eps (\sigma) f,g\right)_{L^2(M)}&= \left({\rm Op}_\eps (\sigma) \chi f,\chi g\right)_{L^2(G)}\\
&= \left({\rm Op}_\eps (\sigma_\eps)\chi f,\chi g\right)_{L^2(G)} +O(\eps ^{N} \|\chi f\|_{L^2(G)} \| \chi g\|_{L^2(G)})\\
&=  \left({\rm Op}_\eps (\sigma_\eps) f,g\right)_{L^2(M)} +O(\eps ^{N} \| f\|_{L^2(M)} \| g\|_{L^2(G)})
\end{align*}
which concludes the proof.
 \end{proof}

\subsection{Time-averaged semi-classical measures}
\label{app:semiclas}

We give here comments about  the proof of Proposition~\ref{p:measure0}. Note that when $\mathbb V=0$, Theorem 2.10(ii)(2) in \cite{FF} implies the statement, except for the continuity of the map $t\mapsto \Gamma_t d\gamma_t$. The key observation is that for any symbol $\sigma\in\mathcal A_0$, 
\begin{equation}\label{eq:sansV}
\frac 1{i\eps} \left[ -\frac{\eps^2}{2}\Delta_M -\eps^2 \mathbb V , {\rm Op}_\eps(\sigma)\right] =  \frac 1{i\eps} \left[ -\frac{\eps^2}{2}\Delta_M , {\rm Op}_\eps(\sigma)\right] + O(\eps)
\end{equation}
in ${\mathcal L}(L^2(G))$ by the boundedness of $\mathbb V$. As a consequence, the results of Theorem 2.10(ii)(2) in \cite{FF} without potential passes to the case with a bounded potential. Note in particular that we do not need any analyticity on the potential. 
The two points of Proposition~\ref{p:measure0} derive from  relation~\eqref{eq:sansV}.

\medskip 
 For (1), using Proposition~\ref{prop:symbcal} and multiplying~\eqref{eq:sansV} by $\eps$, one gets  that for any symbol $\sigma\in\mathcal A_0$ and $\theta\in L^1(G)$,
$$\int_{\R\times G \times \hat G}  \theta(t) {\rm Tr} ([\sigma(x,\lambda) ,H(\lambda)] \Gamma_t(x,\lambda)) d\gamma_t(x,\lambda) dt=0,$$
which implies the commutation of $\Gamma_t(x,\lambda)$ with $H(\lambda)$ and thus the relation~\ref{eq:decomp}.

\medskip 
Let us now prove the transport equation and the continuity property; 
Let $\Pi_n^{(\lambda)}$ be the projector on the $n$-th eigenspace of $H(\lambda)$.
We prove here the continuity of the map $t\mapsto (\Pi_n^{(\lambda)}\Gamma_t {\bf 1}_{\mathfrak z^*}\Pi_n^{(\lambda)},\gamma_t {\bf 1}_{\mathfrak z^*})$. Since $\Pi_n^{(\lambda)}\notin{\mathcal A}_0$, it is necessary to regularize the operator $\Pi_n^{(\lambda)} \sigma(x,\lambda)\Pi_n^{(\lambda)}$ for $\sigma\in\mathcal{A}_0$.
In that purpose, we fix $\chi\in{\mathcal C}^\infty(\R)$ such that $0\leq \chi\leq 1$, $\chi(u)=1$ for on $|u|>1$ and $\chi(u)=0$ for $|u|\leq 1/2$. We  consider   $\sigma\in {{\mathcal A}_0}$ a symbol strictly supported inside a fundamental domain of $M$ and associate with it the symbol
$$\sigma^{(u,n)}(x,\lambda)= \chi(uH(\lambda)) \Pi_n^{(\lambda)} \sigma(x,\lambda)\Pi_n^{(\lambda)},\;\; n\in\N,\;\; u\in (0,1].$$
In view of Corollary~3.9 in~\cite{FF}, this symbol belongs to the class $S^{-\infty}$ of regularizing symbols. Besides, it is also supported inside a fundamental domain of $M$.
Fix $n\in\N$ and  consider the map
$$t\mapsto \left({\rm Op}_\eps( \sigma^{(u,n)}) \psi^\eps(t),\psi^\eps(t)\right):=\ell_{u,\eps}(t)$$
where $\psi^\eps(t)$ is a family of solutions to~\eqref{e:Schrod} for some family of initial data $(\psi^\eps_0)_{\eps>0}$.

\begin{lemma}\label{lem:equicontinuity}
The map $t\mapsto \left({\rm Op}_\eps( \sigma^{(u,n)}) \psi^\eps(t),\psi^\eps(t)\right)$ is equicontinuous with respect to the parameter $\eps\in(0,1)$.
\end{lemma}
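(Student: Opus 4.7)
The plan is to establish a uniform-in-$\eps$ Lipschitz bound on the map $t \mapsto \ell_{u,\eps}(t)$, which directly yields the stated equicontinuity. Differentiating in time and using that $\psi^\eps$ solves~\eqref{e:Schrod} together with the self-adjointness of $\tfrac12\Delta_M + \mathbb{V}$ gives
$$
\frac{d}{dt}\ell_{u,\eps}(t) = i\bigl([{\rm Op}_\eps(\sigma^{(u,n)}),\, \tfrac12\Delta_M + \mathbb{V}]\,\psi^\eps(t),\, \psi^\eps(t)\bigr)_{L^2(M)}.
$$
By Cauchy-Schwarz and conservation of the $L^2$-mass by the Schr\"odinger flow, it suffices to bound $\|[{\rm Op}_\eps(\sigma^{(u,n)}),\, \tfrac12\Delta_M + \mathbb{V}]\|_{\mathcal L(L^2(M))}$ uniformly in $\eps\in (0,1)$.

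The commutator with $\mathbb{V}$ is immediate: since $\mathbb{V}$ is bounded (being analytic on compact $M$) and $\|{\rm Op}_\eps(\sigma^{(u,n)})\|_{\mathcal L(L^2(M))}$ is uniformly bounded by~\eqref{eq:boundedness}, this contribution is $O(1)$ in $\eps$. For the commutator with $\Delta_M$, I would use that the Fourier resolution of $-\Delta_G$ is $H(\lambda)$, so that formally ${\rm Op}_\eps(H) = -\eps^2\Delta_G$, and then transfer this relation to $M$ via~\eqref{eq:localisation} modulo $O(\eps^\infty)$-errors, yielding
$$
[{\rm Op}_\eps(\sigma^{(u,n)}),\, \Delta_M] = -\eps^{-2}\,[{\rm Op}_\eps(\sigma^{(u,n)}),\, {\rm Op}_\eps(H)] + O(\eps^\infty).
$$
The crucial structural observation is that the operator-valued symbol $\sigma^{(u,n)}(x,\lambda)=\chi(uH(\lambda))\Pi_n^{(\lambda)}\sigma(x,\lambda)\Pi_n^{(\lambda)}$ commutes fiberwise with $H(\lambda)$: both $\chi(uH(\lambda))$ and $\Pi_n^{(\lambda)}$ lie in the functional calculus of $H(\lambda)$, and $H(\lambda)$ acts as the scalar $(2n+d)|\lambda|$ on the range of $\Pi_n^{(\lambda)}$. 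Combined with the fact that $\sigma^{(u,n)}$ belongs to the regularizing class $S^{-\infty}$ (by Corollary~3.9 of~\cite{FF}), this allows the refined symbolic calculus to produce the operator-norm estimate $\|[{\rm Op}_\eps(\sigma^{(u,n)}),\, {\rm Op}_\eps(H)]\|_{\mathcal L(L^2(M))} = O(\eps^2)$, exactly compensating the $\eps^{-2}$ prefactor and giving the desired uniform bound.

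The main obstacle is this last operator-norm estimate. A direct application of Proposition~\ref{prop:symbcal} only yields an $O(\eps)$ bound on the commutator with an $\mathcal A_0$-symbol, which after multiplication by $\eps^{-2}$ would diverge like $\eps^{-1}$. The required improvement to $O(\eps^2)$ rests on two combined ingredients: the fiberwise commutation $[\sigma^{(u,n)}, H]=0$, which kills the principal term; and the $\Pi_n^{(\lambda)}$-sandwiching, which renders the unbounded factors $\Delta_p^\lambda H$, $\Delta_q^\lambda H$ appearing in the subprincipal correction bounded, since they then act against operator-valued symbols supported in a single Hermite eigenspace of $H(\lambda)$. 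This is precisely the content of the FF refined pseudodifferential machinery for regularizing operator-valued symbols, on which the argument fully rests.
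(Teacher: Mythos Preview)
Your overall strategy---differentiate in time, reduce to bounding the commutator $[{\rm Op}_\eps(\sigma^{(u,n)}),\Delta_M]$---is the same as the paper's, and you correctly identify that $[\sigma^{(u,n)}(x,\lambda),H(\lambda)]=0$ kills the leading $\eps^{-2}$ contribution. The gap is in your treatment of the next term. The exact expansion (equation~\eqref{eq:d/dt} in the paper) gives
\[
[{\rm Op}_\eps(\sigma^{(u,n)}),-\tfrac12\Delta_M]
= -\eps^{-1}\,{\rm Op}_\eps\bigl(V\cdot\pi^\lambda(V)\,\sigma^{(u,n)}\bigr)
-\tfrac12\,{\rm Op}_\eps(\Delta_M\sigma^{(u,n)}),
\]
and the symbol $V\cdot\pi^\lambda(V)\,\sigma^{(u,n)}=\sum_j \chi(uH)\,\Pi_n(V_j\sigma)\Pi_n\,\pi^\lambda(V_j)$ is \emph{not} zero. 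Note that the right factor $\pi^\lambda(V_j)$ sits outside the $\Pi_n$-sandwich, so the symbol is off-diagonal (it maps the $n$-th Hermite eigenspace to the $(n\pm1)$-th), not zero. Hence the commutator is genuinely of size $\eps^{-1}$ in operator norm, and your claimed $O(\eps^2)$ bound fails; the ``FF refined machinery'' does not provide such a cancellation.

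What the paper actually does with this $\eps^{-1}$ term is a normal-form/integration-by-parts-in-time step: because $V\cdot\pi^\lambda(V)\,\sigma^{(u,n)}$ is off-diagonal, Lemma~4.1 of~\cite{FF} produces $\sigma_1$ with $V\cdot\pi^\lambda(V)\,\sigma^{(u,n)}=[\sigma_1,H(\lambda)]$. Re-applying~\eqref{eq:d/dt} to $\sigma_1$ converts $\eps^{-1}{\rm Op}_\eps([\sigma_1,H])$ into $-\eps\,\tfrac{d}{dt}({\rm Op}_\eps(\sigma_1)\psi^\eps,\psi^\eps)$ plus uniformly bounded terms. After integrating from $t'$ to $t$, the total time derivative becomes a harmless $O(\eps)$ boundary contribution, yielding the uniform Lipschitz bound~\eqref{derivation}. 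This second iteration is the missing idea in your argument; without it the derivative $\tfrac{d}{dt}\ell_{u,\eps}$ is only $O(\eps^{-1})$ and equicontinuity does not follow.
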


We recall that from Theorem~2.5 (i) of~\cite{FF} we have for all $\sigma\in{{\mathcal A}_0}$, $\chi$ and $u$ as above, $\theta\in L^1(\R)$, and $p,p'\in\N$ with $p\not=p'$,
\begin{align}\label{eq:antidiag}
\int_\R \theta(t)  \left({\rm Op}_\eps(\Pi_p \chi(uH(\lambda))\sigma\Pi_{p'}) \psi^\eps(t),\psi^\eps(t)\right) dt = O(\eps)
\end{align}

\begin{proof}
For any symbol $\sigma\in{{\mathcal A}_0}$, we have
\begin{align}\label{eq:d/dt}
\frac d{dt}& \left({\rm Op}_\eps( \sigma) \psi^\eps(t),\psi^\eps(t)\right)  = \frac 1{i\eps^2} \left( \left[{\rm Op}_\eps( \sigma), -\frac {\eps^2} 2 \Delta_M-\eps^2\mathbb{V}\right] \psi^\eps(t),\psi^\eps(t)\right) \nonumber\\
&=  \frac 1{i\eps^2} \left( {\rm Op}_\eps( [\sigma, H(\lambda)] \psi^\eps(t),\psi^\eps(t)\right)
-\frac 1{i\eps} \left( {\rm Op}_\eps( V\cdot \pi^\lambda(V)\sigma) \psi^\eps(t),\psi^\eps(t)\right) \\
&\quad -\frac {1} {2i} \left( {\rm Op}_\eps(\Delta_M \sigma) \psi^\eps(t),\psi^\eps(t)\right) -\frac {1} {i} \left( \left[{\rm Op}_\eps(\sigma),\mathbb{V} \right] \psi^\eps(t),\psi^\eps(t)\right). \nonumber
\end{align}
For $\sigma^{(u,n)}$ (which commutes with $H(\lambda)$) we have
\begin{align*}
\frac d{dt} \ell_{u,\eps}(t) & = \frac 1{i\eps^2} \left( [{\rm Op}_\eps( \sigma^{(u,n)}), -\frac {\eps^2} 2 \Delta_M -\eps ^2 \mathbb V] \psi^\eps(t),\psi^\eps(t)\right)\\
&=
-\frac 1{i\eps} \left( {\rm Op}_\eps( V\cdot \pi^\lambda(V)\sigma^{(u,n)}) \psi^\eps(t),\psi^\eps(t)\right) -\frac {1} {2i} \left( {\rm Op}_\eps(\Delta_M \sigma^{(u,n)}) \psi^\eps(t),\psi^\eps(t)\right) +O(\eps)
\end{align*}
where we used $ [{\rm Op}_\eps( \sigma^{(u,n)}),  \mathbb V] =O(\eps) $ in $\mathcal L(L^2(M))$ by Proposition~\ref{prop:symbcal}.
By Lemma~4.1 in~\cite{FF}, there exists $\sigma_1(x,\lambda)$ such that
\begin{gather}
    V\cdot \pi^\lambda(V)\sigma^{(u,n)}(x,\lambda)= [\sigma_1(x,\lambda), H(\lambda)] \label{e:offdiag5}\\
    (V\cdot \pi^\lambda(V) \sigma_1(x,\lambda)) = \left((n+\frac d2)	i   {\mathcal Z}^{(\lambda)} -   \frac 12 \Delta_M\right) \sigma^{(u,n)}(x,\lambda)	\nonumber
\end{gather}
The proof of these relations is discussed at the end of the proof of Proposition~\ref{p:wpschrod} where we use quite similar properties.
We then write for $t,t'\in\R$,
\begin{align*}
\ell_{u,\eps}(t)-\ell_{u,\eps}(t')  &= -\frac 1 {i\eps} \int_{t'}^t
\left( {\rm Op}_\eps([ \sigma_1,H(\lambda)] )\psi^\eps(s),\psi^\eps(s)\right)ds\\
&\;\;-\frac 1{2i}
\int_{t'}^t
\left( {\rm Op}_\eps(\Delta_M \sigma^{(u,n)}) \psi^\eps(s),\psi^\eps(s)\right)ds +O(\eps |t-t'|).
\end{align*}
Besides, using~\eqref{eq:d/dt} for the symbol $\sigma_1$, we deduce
\begin{align*}
&-\frac 1{i\eps}  \left( {\rm Op}_\eps([ \sigma_1,H(\lambda)] )\psi^\eps(t),\psi^\eps(t)\right)  =-\frac \eps {i} \left([{\rm Op}_\eps(\sigma_1),\mathbb{V}] \psi^\eps(t),\psi^\eps(t)\right)
-\eps  \frac d{dt} \left({\rm Op}_\eps( \sigma_1) \psi^\eps(t),\psi^\eps(t)\right) \\
&\qquad \qquad \qquad \qquad \qquad-\frac 1 i \left({\rm Op}_\eps( V\cdot \pi^\lambda(V) \sigma_1) \psi^\eps(t),\psi^\eps(t)\right)-\frac \eps {2i} \left({\rm Op}_\eps( \Delta_M \sigma_1) \psi^\eps(t),\psi^\eps(t)\right).
 \end{align*}
 This implies
 \begin{align}\nonumber
\ell_{u,\eps}(t)-\ell_{u,\eps}(t')&  = -\frac 1 i \int_{t'}^t \left({\rm Op}_\eps( V\cdot \pi^\lambda(V) \sigma_1) \psi^\eps(s),\psi^\eps(s)\right) ds
-\frac 1{2i}
\int_{t'}^t
\left( {\rm Op}_\eps(\Delta_M \sigma_1) \psi^\eps(s),\psi^\eps(s)\right)ds \\
\nonumber 
&\qquad +O(\eps |t-t'|)\\
\label{derivation}
& = (n+\frac d 2) \int_{t'}^t \left({\rm Op}_\eps(\mathcal Z^{(\lambda)}   \sigma) \psi^\eps(s),\psi^\eps(s)\right) ds
+O(\eps |t-t'|)
\end{align}
which concludes the proof.
\end{proof}
The continuity of the map $t\mapsto (\Pi_n^{(\lambda)}\Gamma_t {\bf 1}_{\mathfrak z^*}\Pi_n^{(\lambda)},\gamma_t {\bf 1}_{\mathfrak z^*})$ follows from Lemma \ref{lem:equicontinuity} and the Arzel\`a-Ascoli theorem. Note that, equation~\eqref{derivation} of the proof of Lemma~\ref{lem:equicontinuity}  also implies the transport equation~\eqref{transport}.

\medskip 
Finally, let us prove Point (2) of Proposition~\ref{p:measure0}. We use the relation
\begin{equation*}
\frac{1}{\eps}[-\eps^2\Delta_M,{\rm Op}_\eps(\sigma)]=\frac{1}{\eps}{\rm Op}_\eps([H(\lambda),\sigma])-2{\rm Op}_\eps(V\cdot \pi^\lambda(V)\sigma)-\eps{\rm Op}_\eps(\Delta_M\sigma).
\end{equation*}
together with~\eqref{eq:sansV}.  We denote by~$\varsigma_t$ the scalar measure $\Gamma_t d\gamma_t{\bf 1}_{\mathfrak v^*}$ and we use that for the finite dimensional representations $\pi^{(0,\omega)}$, we have $\pi^{(0,\omega)}(V_j)=i\omega_j$. In the limit $\eps\rightarrow 0$, we obtain  that for any $\theta\in L^1(\R)$ and $\sigma\in \mathcal A_0$ commuting with $H(\lambda)$, 
$$\int_{\R\times M\times \mathfrak z^*} \theta(t)
{\rm Tr}(V\cdot \pi(V) \sigma(x,\lambda)\Gamma_t (x,\lambda)) d\gamma_t (x,\lambda)dt+ \int_{\R\times M\times \mathfrak v^*}\theta(t)i \omega\cdot V\sigma(x,\omega) d\varsigma_t(x,\omega) dt=0.$$
Since $\Gamma_t$ commutes with $H(\lambda)$ and $V\cdot \pi(V)\sigma$ is off-diagonal when $\sigma$ is diagonal (see~\eqref{e:offdiag5}), we deduce that the first term of the left-hand side of the preceding relation is $0$. Therefore,
$$ \int_{\R\times M\times \mathfrak v^*} \theta(t)\omega\cdot V\sigma(x,\omega) d\varsigma_t(x,\omega) dt=0,$$
which implies the invariance of $\varsigma_t(x,\omega)$ by the map $(x,\omega)\mapsto ({\rm Exp}(s\omega\cdot V) x,\omega)$, $s\in\R$.


\section{Wave packet solutions to the Schr\"odinger equation} \label{a:wpsolutions}

We assume here $\mathbb V=0$. We prove  that the solution of~\eqref{e:Schrod} with an initial datum which is a wave packet can be approximated by a wave packet. We focus on the case where the harmonics verify $\Phi_1=\Phi_2=h_0$, see the discussion preceding Remark \ref{r:profiles} for more details. We work in $G$, keeping in mind that by Remark~\ref{correspGM}, the result extends to~$M$. Note that the results of this section give in particular a second proof of the necessary part of Theorem \ref{t:main} in case $\mathbb{V}=0$.

\begin{proposition} \label{p:wpschrod}
Let $u^\eps(t)$ be the solution of equation~\eqref{e:Schrod} with $\mathbb V=0$ and initial data of the form
$$u^\eps_0= WP^\eps_{x_0,\lambda_0} (a,h_0,h_0), $$
where $(x_0,\lambda_0)\in M\times (\mathfrak z^*\setminus\{0\}) $, $a\in{\mathcal S}(G)$ and $h_0$ is the first Hermite function.
Then, there exists a map $(t,x)\mapsto a(t,x)$ in $\mathcal C^1(\R, {\mathcal S}(G))$ such that for all $k\in \N$,
$$u^\eps(t,x) = WP^\eps_{x(t),\lambda_0} (a(t,\cdot),h_0,h_0)+O(\sqrt\eps)$$
in $\Sigma^k_\eps$ (see \eqref{e:Sigmakeps} for definition), with
$$x(t)= {\rm Exp}\left( \frac{d}{2} t{\mathcal Z}^{(\lambda_0)} \right) x_0.$$
\end{proposition}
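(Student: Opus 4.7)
The plan is to construct an approximate wave-packet solution $v^\eps(t,x) = WP^\eps_{x(t),\lambda_0}(a(t,\cdot),h_0,h_0)$ and conclude via Duhamel's principle together with the $\Sigma^k_\eps$-boundedness of the Schr\"odinger flow. Working in $G$ (and transferring to $M$ via Remark~\ref{correspGM}, using that the profile is supported in a fundamental domain for small $\eps$), I write $v^\eps = C_\eps\, \mathbb{P}(e_\eps a_\eps)(x(t)^{-1}x)$ with $C_\eps = |\lambda_\eps|^{d/2}\eps^{-p/2}$, $e_\eps(y) = (\pi^{\lambda_\eps}_y h_0, h_0)$ and $a_\eps(y) = a(t, \delta_{\eps^{-1/2}} y)$, and apply $(i\partial_t + \tfrac12\Delta_G)$. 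The chain rule combined with the identities~\eqref{eq:PQWP} yields a natural expansion in half-integer powers of $\eps$: each $V_j$ applied to $e_\eps$ pulls out $\sqrt{|\lambda_\eps|} = \sqrt{|\lambda_0|}/\eps$, whereas each $V_j$ applied to the dilated profile $a_\eps$ pulls out $\eps^{-1/2}$ via the homomorphism property of $\delta_{\eps^{-1/2}}$. The Leibniz rule then splits $\Delta_G(e_\eps a_\eps)$ into a principal piece ($\eps^{-2}$), cross terms ($\eps^{-3/2}$) and a lower-order piece ($\eps^{-1}$).

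The leading $\eps^{-2}$ cancellation fixes the classical trajectory. On the Laplacian side, the ground-state identity $H(\lambda)h_0 = d|\lambda|h_0$ gives $\tfrac12\Delta_G e_\eps = -(|\lambda_0|d)/(2\eps^2)\cdot e_\eps$. On the time-derivative side, the chain rule on $x(t)^{-1}x$ translates into the action of the left-invariant vector field $-\dot x(t)$; since $\dot x(t) = \tfrac{d}{2}\mathcal{Z}^{(\lambda_0)}$ is central, $\pi^{\lambda_\eps}(\mathcal{Z}^{(\lambda_0)})$ acts by the scalar $i|\lambda_0|/\eps^2$, producing a contribution to $i\partial_t v^\eps$ that exactly cancels the Laplacian's $\eps^{-2}$ term. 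This is what pins down $x(t)$.

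At the subsequent orders $\eps^{-3/2}, \eps^{-1}, \eps^{-1/2}$, Taylor expanding in the spirit of the proof of Lemma~\ref{lem:pseudo} rewrites the residue as a sum of wave packets with modified profiles and (in general) Hermite-type harmonics. Parity of $h_0$ is crucial: inner products of the form $(\xi_k \pi^{\lambda_\eps}_y h_0, h_0)$ and $(\partial_{\xi_k}\pi^{\lambda_\eps}_y h_0, h_0)$ arising from single applications of $P^{(\lambda_0)}_k$ or $Q^{(\lambda_0)}_k$ to $e_\eps$ are absorbed into wave packets with profiles built from derivatives of $a$, thanks to the same algebraic mechanism that makes the subprincipal commutator~\eqref{e:offdiag5} off-diagonal in Hermite modes. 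Requiring the residue to be $O(\sqrt\eps)$ then determines a first-order linear transport PDE for $a(t,\cdot)$ on $G$, which preserves $\mathcal{S}(G)$ and has a unique solution $a(t,\cdot) \in \mathcal{C}^1(\R, \mathcal{S}(G))$ with initial datum $a$. If needed, subleading $\sqrt\eps$-corrections to the profile can be added (as in Remark~\ref{rem:asympexpansion}) to drive the residue $r^\eps(t) := (i\partial_t + \tfrac12\Delta_G)v^\eps(t)$ down to $\|r^\eps(t)\|_{\Sigma^k_\eps} = O(\sqrt\eps)$ uniformly on compact time intervals.

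The conclusion follows from Duhamel's formula
\[
u^\eps(t) - v^\eps(t) = -i\int_0^t e^{i(t-s)\Delta_G/2}\, r^\eps(s)\, ds,
\]
combined with the fact that $e^{is\Delta_G/2}$ acts boundedly on $\Sigma^k_\eps$ with norm controlled on compact intervals (the propagator commutes with $\Delta_G$, and $[|x|^\ell, \Delta_G]$ is of lower order, giving standard polynomial-in-$s$ commutator estimates). The main obstacle is the identification and bookkeeping in the previous step: tracking all terms at orders $\eps^{-3/2}$ through $\eps^0$ requires both the noncommutative chain rule (which produces center-variable contributions even under purely horizontal $V_j$) and the parity structure of Hermite functions. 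The restriction to $\Phi_1 = \Phi_2 = h_0$ is precisely what guarantees that the subprincipal correction is diagonal with respect to Hermite modes, so the profile evolution is a genuine transport rather than a true mixing between $\mathcal V_n^{(\lambda_0)}$ levels.
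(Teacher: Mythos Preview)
Your overall architecture---build an approximate wave packet, bound the residue, close by Duhamel---matches the paper, and your identification of the $\eps^{-2}$ cancellation as the mechanism fixing $x(t)$ is correct. The gap is in the middle step.

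First, the off-diagonal terms at orders $\eps^{-3/2}$ through $\eps^0$ are not ``absorbed'' by modifying the scalar profile~$a$: they must be cancelled by adding \emph{operator-valued} correctors $\sigma_1,\sigma_2,\sigma_3$ to the ansatz, exactly via the commutator mechanism $V\sigma_0\cdot\pi^{\lambda_0}(V)=-[H(\lambda_0),\theta_0]$ you allude to through~\eqref{e:offdiag5}. The paper works with the rescaled quantity $\widetilde v^\eps(t,x)=\eps^{Q/4}v^\eps(t,x(t)\delta_{\sqrt\eps}x)$ and the key identity
\[
(i\eps\partial_t-\mathcal L)\bigl(\sigma\,\pi^{\lambda_0}_{\delta_{\eps^{-1/2}}x}h_0,h_0\bigr)
=\tfrac{1}{2\eps}\bigl([H(\lambda_0),\sigma]\cdots\bigr)+\tfrac{1}{\sqrt\eps}\bigl(V\sigma\cdot\pi^{\lambda_0}(V)\cdots\bigr)+\bigl((i\eps\partial_t-\mathcal L)\sigma\cdots\bigr),
\]
and then constructs $\sigma_1=2\Pi_0\theta_0$, $\sigma_2$, $\sigma_3$ in a hierarchy so that each corrector kills the leading off-diagonal residue of the previous step.

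Second---and this is the genuine error---the profile equation that emerges is \emph{not} first-order transport. The correctors feed back a diagonal contribution at the order where $i\partial_t a$ appears: one has $\Pi_0\,V\sigma_2\cdot\pi^{\lambda_0}(V)\,\Pi_0=\widetilde{\mathcal L}\,a\,\Pi_0$ for an explicit self-adjoint differential operator
\[
\widetilde{\mathcal L}=-\tfrac{1}{2|\lambda_0|}\Bigl(i\bigl(\tfrac d2-1\bigr)\mathcal Z^{(\lambda_0)}-\tfrac12\Delta_G\Bigr)\bigl(\Delta_G+id\,\mathcal Z^{(\lambda_0)}\bigr),
\]
and the profile solves the Schr\"odinger-type equation $i\partial_t a+\widetilde{\mathcal L}a=0$, not a transport equation. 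Missing this feedback means your residue is still $O(1)$ (in the rescaled frame), i.e.\ $O(\eps^{-1})$ in the original equation, and Duhamel does not close. The computations behind this feedback (the analogues of relations~\eqref{relation1}--\eqref{relation3} in the paper, ultimately creation--annihilation identities for $H(\lambda_0)$) are precisely the heart of the proof that your sketch elides.
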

In particular, this proposition means that, contrarily to what happens in Riemannian manifolds, there are wave packet solutions of the Schr\"odinger equation which remain localized even in very long time (of order $\sim 1$ independently of $\eps$). For example, this is not the case for the torus (see~\cite{AM14,BZ}) or semi-classical completely integrable systems (see~\cite{AFM15}).

\medskip

In what follows, we use the notation $\pi^\lambda(X)$ for denoting the operator such that
$$\mathcal F ( Xf )(\lambda) =\pi^\lambda(X)\mathcal F(f) ,\;\;\forall f\in \mathcal H_\lambda$$
where $X\in \mathfrak g$ (recall that $Xf$ is defined in~\eqref{eq:Xf}).
Using an integration by part in the definition of $\mathcal F ( Xf )(\lambda)$ and the fact that $(\pi_x^\lambda)^*=\pi_{-x}^\lambda$, we obtain in particular
\begin{equation} \label{e:derpilambda}
X ( \pi^\lambda_x \Phi_1,\Phi_2)= ( \pi^\lambda(X)\pi^\lambda_x \Phi_1,\Phi_2)
\end{equation}
and, in view of~\eqref{eq:PQWP}, we have
\begin{equation} \label{e:comppilambda}
\pi^\lambda( P_j^{(\lambda)})= \sqrt{|\lambda|}  \partial_{\xi_j}\;\;\mbox{and}\;\; \pi^\lambda( Q_j^{(\lambda)})=i \sqrt{|\lambda|}  {\xi_j}.
\end{equation}
We recall that extending the definition to $-\Delta_G$, we have $\pi^\lambda(-\Delta_G)=  H(\lambda)$ where $H(\lambda)$ is the Harmonic oscillator
\begin{equation} \label{e:compHlambda}
H(\lambda)= |\lambda| \sum_{j=1}^{d} (-\partial_{\xi_j} ^2 +\xi_j^2).
\end{equation}
Of course, we also have the relations
\begin{equation} \label{e:Hlambda}
H(\lambda)= -\sum_{j=1}^{d} \pi^\lambda (V_j)^2 =- \sum_{j=1}^{d} \left(\pi^\lambda (P^{(\lambda)}_j)^2 +\pi^\lambda (Q^{(\lambda)}_j)^2\right).
\end{equation}
In the sequel, in order to simplify notations, since $\lambda=\lambda_0$ is fixed, we write $P_j$ and $Q_j$ instead of $P_j^{(\lambda_0)}$ and $Q_j^{(\lambda_0)}$. We also use the notation $\Pi_n$ instead of $\Pi_n^{(\lambda_0)}$.

\begin{proof}[Proof of Proposition \ref{p:wpschrod}]
We construct a function  $v^\eps(t,x)$  
of the form
\begin{equation} \label{e:veps1}
v^\eps(t,x)=  |\lambda_\eps|^{d/2} \eps^{-p/2} \left( \sigma^\eps(t,\delta_{\eps^{-1/2}}(x_0^{-1} x)) \pi^{\lambda_\eps}_{x_0^{-1} x} h_0,h_0\right),\;\;\lambda_\eps=\frac{\lambda_0}{\eps^2}
\end{equation}
which 
solves for all $t\in\R$,
\begin{equation} \label{e:veps2}
i\partial_t v^\eps +\frac 12 \Delta_g v^\eps= O(\sqrt\eps)
\end{equation}
 in all the spaces $\Sigma^\eps_k$, $k\in\N$.
 More precisely, we look for  $\sigma^\eps(t,x) =\sum_{j=1}^N\eps^{\frac{j}{2}}\sigma_j(t,x)$, for some $N\in\N$ to be fixed later and 
some  maps $(t,x)\mapsto \sigma_j(t,x) $ that are smooth maps from $\R\times G$ to $\mathcal L(L^2(\R^d))$, and we shall require that $\sigma_0(t,x)=a(t,x){\rm Id}$ for some smooth function $a$ satisfying $a(0,x)=a(x)$
(note that, more rigorously,   these operator-valued maps are the values at $\lambda=\lambda_0$ of fields of operators $\sigma_j(t,x,\lambda)$ over the spaces $\mathcal H_\lambda = L^2(\R^d)$ of representations, as the symbols of the pseudodifferential calculus). 
  Then, an energy estimate shows that $u^\eps(t)-v^\eps(t) =O(\sqrt\eps)$ in $L^2(G)$ for all $t\in \R$.
  
  \medskip

In view of~\eqref{e:carac}, it is equivalent to construct a family
$\widetilde v^\eps(t,x)= \eps^{Q/4}  v^\eps(t,x(t) \delta_{\sqrt\eps}( x))$
which satisfies
$$i\eps \partial_t \widetilde  v^\eps -i\frac{d}{2} \mathcal Z^{(\lambda_0)} \widetilde v^\eps  +\frac 12 \Delta_G \widetilde v^\eps= O(\eps \sqrt\eps)$$
and 
\begin{equation} \label{e:expveps}
\widetilde v^\eps (t,x)=  \sum_{j=0}^N \eps^{\frac{j}{2}}
(\sigma_j(t,x)\pi^{\lambda_0}_{\delta_{\eps^{-1/2}}(x)}h_0,h_0), \;\; N\in\N.
\end{equation}
We emphasize that if we look for operators $\sigma_j(t,x)$ which are of finite rank, then, decomposing $\sigma_j(t,x) h_0$ on the Hermite basis, the function $(\sigma_j(t,x)\pi^{\lambda_0}_{\delta_{\eps^{-1/2}}(x)}h_0,h_0)$ is a sum of terms of the form 
$$(a_{j,\beta}(t,x)\pi^{\lambda_0}_{\delta_{\eps^{-1/2}}(x)}h_0,h_\beta),$$
which means that $v^\eps(t)$ satisfying~\eqref{e:veps1} is indeed a sum of wave packets.

\medskip 
Let us now construct the operators $\sigma_j(t,x)$. 
In order to simplify the notations, we set $S_0=
|\lambda_0|^\frac{d}{ 2}$ and
$$\mathcal L =  i \frac{d}{2}  \mathcal Z ^{(\lambda_0)} -\frac 12 \Delta_G.$$
Note that
$$i\frac{d}{2}  \mathcal Z ^{(\lambda_0)}\pi^{\lambda_0}_x= -S_0 \pi^{\lambda_0}_x $$
 and that $S_0$ is such that $H(\lambda_0) h_0= 2S_0 h_0$. We denote by $\Pi_0$ the orthogonal projector on the eigenspace of $H(\lambda_0)$ for the eigenvalue $2S_0$.
For any operator-valued $\sigma(t,x)$, we have the following result:
\begin{align*}
(i\eps\partial_t-\mathcal{L})&(\sigma(t,x)\pi^{\lambda_0}_{\delta_{\eps^{-1/2}}(x)}h_0,h_0)=\frac{S_0}{\eps}(\sigma (t,x)\pi^{\lambda_0}_{\delta_{\eps^{-1/2}}(x)}h_0,h_0)-\frac{1}{2\eps}(\sigma (t,x)H(\lambda_0)\pi^{\lambda_0}_{\delta_{\eps^{-1/2}}(x)}h_0,h_0)\\
&\qquad +\frac{1}{\sqrt\eps}(V\sigma(t,x)\cdot \pi^{\lambda_0}(V)\pi^{\lambda_0}_{\delta_{\eps^{-1/2}}(x)}h_0,h_0)+((i\eps\partial_t-\mathcal{L})\sigma (t,x)\pi^{\lambda_0}_{\delta_{\eps^{-1/2}}(x)}h_0,h_0)
\end{align*}
where  $V\sigma\cdot \Pi^{\lambda_0}(V)= \sum_{j=1}^{2d} V_j\sigma \Pi^{\lambda_0}(V_j)$.
Equivalently, we can write the latter relation under the more convenient form:
\begin{align}
\nonumber
(i\eps\partial_t-\mathcal{L})&(\sigma (t,x)\pi^{\lambda_0}_{\delta_{\eps^{-1/2}}(x)}h_0,h_0)=\frac{1}{2\eps}([H(\lambda_0),\sigma (t,x)]\pi^{\lambda_0}_{\delta_{\eps^{-1/2}}(x)}h_0,h_0)\\
\label{computationcle}
&\qquad +\frac{1}{\sqrt\eps}(V\sigma(t,x)\cdot \pi^{\lambda_0}(V)\pi^{\lambda_0}_{\delta_{\eps^{-1/2}}(x)}h_0,h_0)+((i\eps\partial_t-\mathcal{L})\sigma(t,x)\pi^{\lambda_0}_{\delta_{\eps^{-1/2}}(x)}h_0,h_0).
\end{align}
Therefore, for $\sigma_0=a\in\mathcal{C}^1(\R,\mathcal{S}(G))$ a scalar map, we have
\begin{equation*}
 (i\eps \partial_t - \mathcal{L})(\sigma_0(t,x)\pi^{\lambda_0}_{\delta_{\eps^{-1/2}}(x)}h_0,h_0)=
 (r_0^\eps(t,x)\pi^{\lambda_0}_{\delta_{\eps^{-1/2}}(x)}h_0,h_0)
 \end{equation*}
with
\begin{align}\label{eq:r0}
r_0^\eps(t,x)= &\frac{1}{\sqrt\eps} (V\sigma_0(t,x)\cdot \pi^{\lambda_0}(V)\pi^{\lambda_0}_{\delta_{\eps^{-1/2}}(x)}h_0,h_0)+
((i\eps\partial_t -\mathcal{L})\sigma_0(t,x)\pi^{\lambda_0}_{\delta_{\eps^{-1/2}}(x)}h_0,h_0)
\end{align}
In other words, for any $\sigma_0(t,x)$ which is scalar, the rest term is of order $\eps^{-1/2}$. At the end of the proof, we will specify our choice of $\sigma_0$ in \eqref{e:sigma0}.

\medskip 

We now focus on constructing correction terms in order to compensate the rest term $r_0^\eps(x)$. Note that since $\Pi_0 h_0=h_0$, we also have
\begin{align*}
r_0^\eps(t,x)= &\frac{1}{\sqrt\eps} (\Pi_0 V\sigma_0(t,x)\cdot \pi^{\lambda_0}(V)\pi^{\lambda_0}_{\delta_{\eps^{-1/2}}(x)}h_0,h_0)+
((i\eps\partial_t -\mathcal{L})\sigma_0(t,x)\pi^{\lambda_0}_{\delta_{\eps^{-1/2}}(x)}h_0,h_0)
\end{align*}
The second  term  involves the scalar operator  $(i\eps\partial_t -\mathcal{L})\sigma_0(t,x)$ which commutes with $\Pi_0$  while the first one depends on $\Pi_0 V\sigma_0(t,x)\cdot \pi^{\lambda_0}(V)$ which does not. For constructing $\sigma_1(t,x)$, we use  the computation~\eqref{computationcle} and
 the fact that for symbols $\sigma(t,x)$ that anti-commute with $H(\lambda_0)$, one can find $\theta(t,x)$ such that
$\sigma(t,x)= [H(\lambda_0),\theta(t,x)]$.

\medskip

$\bullet$ {\it Construction of the approximate solution up to $\sqrt\eps$.} We  have already  noticed in Section~\ref{app:semiclas} that  if
$$\theta_0(t,x)= -\frac 1 {2i|\lambda_0|} \sum_{j=1}^{d}\left(P_j \sigma_0(t,x) \pi^{\lambda_0}(Q_j)   - Q_j\sigma_0(t,x) \pi^{\lambda_0}(P_j)\right),$$
we have the following relations that we prove below
\begin{align}\label{relation1}
 V \sigma_0(t,x)\cdot \pi^{\lambda_0}(V  )  & =-  [H(\lambda_0) , \theta_0(t,x)],\\
 \label{relation2}
 \Pi_0 (V \theta_0(t,x)\cdot\pi^{\lambda_0}(V))  \Pi_0 &= \frac12 \Pi_0 \left( i \frac{d}{2} \mathcal Z ^{\lambda_0} \sigma_0(t,x) -\frac 12\Delta_G \sigma_0(t,x) \right)\Pi_0= \frac 12 \Pi_0 \mathcal{L}\sigma_0(t,x).
\end{align}
Therefore, setting
$$\sigma_1(t,x)= 2 \Pi_0 \theta_0(t,x),$$
and using \eqref{computationcle}, we obtain that
\begin{align*}
(i\eps\partial_t-\mathcal{L})&(\sigma_1(t,x)\pi^{\lambda_0}_{\delta_{\eps^{-1/2}}(x)}h_0,h_0)=
-\frac{1}{\eps}( V \sigma_0(t,x)\cdot \pi^{\lambda_0}(V  )  \pi^{\lambda_0}_{\delta_{\eps^{-1/2}}(x)}h_0,h_0)\\
&\qquad +\frac{1}{\sqrt\eps}(\mathcal{L} \sigma_0(t,x) \pi^{\lambda_0}_{\delta_{\eps^{-1/2}}(x)}h_0,h_0)
+((i\eps\partial_t -\mathcal{L})\sigma_1(t,x)\pi^{\lambda_0}_{\delta_{\eps^{-1/2}}(x)}h_0,h_0)
\end{align*}
Therefore, the function
$\widetilde v^\eps_1(t,x)= ((\sigma_0(t,x) +\sqrt\eps \sigma_1(t,x) )\pi^{\lambda_0}_{\delta_{\eps^{-1/2}}(x)}h_0,h_0)$
satisfies in $\Sigma^k_\eps$ the equation
$$
(i\eps\partial_t-\mathcal{L})\widetilde v^\eps_1(t,x)= r^\eps_1(t,x)+O(\eps\sqrt\eps)
$$
with
$$r^\eps_1(t,x) = -\sqrt\eps
(\mathcal{L}\sigma_1(t,x)\pi^{\lambda_0}_{\delta_{\eps^{-1/2}}(x)}h_0,h_0)
+ i\eps (\partial_t \sigma_0 (t,x)\pi^{\lambda_0}_{\delta_{\eps^{-1/2}}(x)}h_0,h_0) .
$$

$\bullet$ {\it Construction of the approximate solution up to $\eps$.} We observe that by construction $\theta_0(t,x)$ and $\sigma_1(t,x)$ anticommute with $H(\lambda_0)$. Therefore, there exists $\sigma_2(t,x)$ such that
\begin{equation}\label{def:sigma2}
\mathcal{L}\sigma_1(t,x) =  \frac  12  [H(\lambda_0), \sigma_2(t,x)],
\end{equation}
and the function $\widetilde v^\eps_2(t,x)= ((\sigma_0(t,x) +\sqrt\eps \sigma_1(t,x) +\eps\sqrt\eps\sigma_2(t,x)  )\pi^{\lambda_0}_{\delta_{\eps^{-1/2}}(x)}h_0,h_0)$
satisfies the equation
$$
(i\eps\partial_t-\mathcal{L})\widetilde v^\eps_2(t,x)= r^\eps_2(t,x)+O(\eps\sqrt\eps)
$$
with
\begin{align*}
r^\eps_2(t,x) &= \eps(V\sigma_2(t,x)\cdot \pi^{\lambda_0}(V)\pi^{\lambda_0}_{\delta_{\eps^{-1/2}}(x)}h_0,h_0)
+ i\eps (\partial_t \sigma_0 (t,x)\pi^{\lambda_0}_{\delta_{\eps^{-1/2}}(x)}h_0,h_0) .
\end{align*}
At this stage of the proof, we observe that by choosing an adequate term $\sigma_3$,  the off-diagonal part of $V\sigma_2\cdot \pi^{\lambda_0}(V)$ can be treated in the same manner than the off-diagonal term $\mathcal{L}\sigma_1$. Finally we are left with
$$\widetilde v^\eps_3(t,x)= ((\sigma(t,x) +\sqrt\eps \sigma_1(t,x) +\eps\sqrt\eps \sigma_2(t,x) +\eps^2 \sigma_3(t,x)  )\pi^{\lambda_0}_{\delta_{\eps^{-1/2}}(x)}h_0,h_0)$$
and the equation
$$
(i\eps\partial_t-\mathcal{L})\widetilde v^\eps_3(t,x)= r^\eps_3(t,x) +O(\eps^{3/2})
$$
with
$$r^\eps_3(t,x)= \eps( (i\partial_t \sigma_0  + \Pi_0 V\sigma_2(t,x)\cdot \pi^{\lambda_0}(V) \Pi_0) \pi^{\lambda_0}_{\delta_{\eps^{-1/2}}(x)}h_0,h_0).$$

\medskip

$\bullet$ {\it Construction of the approximate solution up to $\eps^{3/2}$.}
For concluding the proof, we use the specific form of the term $\Pi_0 V\sigma_2(t,x)\cdot \pi^{\lambda_0}(V) \Pi_0$. We claim, and we prove below, that there exists a selfadjoint differential operator $\widetilde{\mathcal{L}}$ such that
\begin{equation} \label{relation3}
  \Pi_0 V\sigma_2(t,x)\cdot \pi^{\lambda_0}(V) \Pi_0=\widetilde{\mathcal{L}}\sigma_0(t,x)\Pi_0.
\end{equation}
Therefore, it is enough to choose the function $\sigma_0(t,x)$ as the solution of the equation
\begin{equation} \label{e:sigma0}
i\partial_t \sigma_0(t,x) + \widetilde{\mathcal{L}}\sigma_0(t,x)=0  \quad \sigma_0(0,x)=a(x).
\end{equation}

\medskip

$\bullet$ {\it Proof of relations~\eqref{relation1}, \eqref{relation2} and~\eqref{relation3}.}
Let us begin with~\eqref{relation1}. Using \eqref{e:comppilambda} and \eqref{e:compHlambda}, we get that for $1\leq j\leq d$ there holds
$$[H(\lambda_0), \pi^{\lambda_0}(Q_j) ]= 2i|\lambda|\pi^{\lambda_0}(P_j)\;\;\mbox{and}\;\;
[H(\lambda_0), \pi^{\lambda_0}(P_j) ]= -2i|\lambda_0|\pi^{\lambda_0}(Q_j).$$
Therefore
\begin{align*}
[H(\lambda_0), \theta_0] &= - \frac 1{2i|\lambda|} \sum_{j=1}^d (P_j \sigma_0 [H, \pi^{\lambda_0}(Q_j)]-Q_j\sigma_0 [H, \pi^{(\lambda_0)}(P_j)])\\
&= - \sum_{j=1}^{d} (P_j \sigma_0\pi^{\lambda_0}(P_j)+Q_j \sigma_0 \pi^{(\lambda_0)}(Q_j))\\
&= - V\sigma_0\cdot \pi^{\lambda_0}(V)
\end{align*}
which gives~\eqref{relation1}.

\medskip

The relation~\eqref{relation2} is a direct application of  Lemma~B.2 in~\cite{FF} which states that if
$$
T:=
\left(\sum_{j_1=1}^{2d}
V_{j_1} \pi^{\lambda_0} (V_{j_1})\right)\circ
\left(\sum_{j_2=1}^d
\left(P_{j_2} \pi^{\lambda_0} (Q_{j_2}) - Q_{j_2} \pi^{\lambda_0}(P_{j_2})\right)\right),
$$
then
\begin{equation*}\label{eq:PinTPin}
\Pi_n  T \Pi_n  =
|\lambda_0|
\left(   (n+\frac d2) {\mathcal Z}^{(\lambda_0)}
 +\frac i 2 \Delta_G\right) \Pi_n
 \end{equation*}
where $\Pi_n$ denotes the orthogonal projector on ${\rm Vect}(h_\alpha,\;\;|\alpha|=n)$ (recall that $\Pi_n$ depends on $\lambda_0$ since it is defined from $H(\lambda_0)$ but we omit this fact in the notation).
 Note that these relations are nothing but consequences of the elementary properties of the creation-annihilation operators $\partial_{\xi_j}$ and $i\xi_j$.

 \medskip

Let us now prove the claim~\eqref{relation3}.We use the notations of~\cite{FF} and introduce the operators 
$$
R_j := \frac12 (P_j - iQ_j), 
\quad\mbox{and}\quad 
\bar R_j := \frac12 (P_j + iQ_j).
$$
By \eqref{eq:PQWP}, the operators $\pi^{\lambda_0} (R_j)=\frac{\sqrt{|\lambda_0|}}2 (\partial_{\xi_j} +\xi_j)$ and 
$\pi^{\lambda_0} (\bar R_j)=\frac{\sqrt{|\lambda_0|}}2 (\partial_{\xi_j} -\xi_j) $
are the creation-annihilation operators  associated with the harmonic oscillator $H(\lambda_0)$.
The well-known recursive relations of the Hermite functions 
	give for
$\alpha\in \N^d$ and $j=1,\ldots,d$,
 $$
\pi^{\lambda_0} (R_j) h_\alpha
= \frac{\sqrt{|\lambda_0|}}2 
\sqrt{2\alpha_j} h_{\alpha-{\bf 1}_j}
\qquad
\pi^{\lambda_0}(\bar R_j) h_\alpha
= -\frac{\sqrt{|\lambda_0|}}2 
\sqrt{2(\alpha_j+1)} h_{\alpha+{\bf 1}_j}.
$$
In the preceding formula, we use the convention $h_{\alpha-{\bf 1}_j}=0$ as soon as $\alpha_j=0$. Actually, one has $\pi(R_j) h_0=0$.  We will also use the expression of $\Pi_0 \pi(\bar R_j)$ that derives from these formula.

\medskip 

Let us now compute $\sigma_2$. Starting from
$$\sum_{j=1}^d(P_j\pi^{\lambda_0}(Q_j)-Q_j \pi^{\lambda_0}(P_j) ) 
= -2 i \sum_{j=1}^d (R_j\pi^{\lambda_0}(\bar R_j)-\bar R_j \pi^{\lambda_0}(R_j)),
$$
and using $\Pi_0 \pi^{\lambda_0}(\bar R_j)=0$, we obtain
$$
\sigma_1(t,x)=-\frac{2\Pi_0}{|\lambda_0|} \sum_{j=1}^d \bar R_j a(t,x)\pi^{\lambda_0}(R_j).
$$
Therefore $\sigma_1=\Pi_0 \sigma_1 \Pi_1$ can be written
$$\Pi_0 \sigma_1 \Pi_{1} = -\frac 2{|\lambda_0|} \sum_{j=1}^d \bar R_j a(t,x)\Pi_0\pi^{\lambda_0}(R_j) .$$
We deduce from~\eqref{def:sigma2} that 
$$\Pi_0 \sigma_2 \Pi_{1} =- \frac{1}{|\lambda_0|}  \Pi_0\mathcal{L} \sigma_1\Pi_{1}.$$
Therefore  
$$\sigma_2(t,x)= \frac {2} {|\lambda_0|^2 } \sum_{j=1}^d  \mathcal{L} \bar R_j a(t,x)\Pi_0\pi^{\lambda_0}(R_j).
$$
We now use that for any operator-valued $\sigma(t,x)$,
$$V\sigma\cdot \Pi^{\lambda_0}(V)= 2\sum_{k=1}^d (R_k\sigma\pi^{\lambda_0}(\bar R_k)+\bar R_k \sigma\pi^{\lambda_0}(R_k))$$
and we obtain 
\begin{align*}
V\sigma_2 \cdot \Pi^{\lambda_0}(V)&=
\frac {4}{|\lambda_0|^2 } \sum_{j,k=1}^d  (R_k \mathcal{L} \bar R_j a(t,x)\Pi_0\pi^{\lambda_0}(R_j)\pi^{\lambda_0}(\bar R_k)
+ \bar R_k \mathcal{L} \bar R_j a(t,x)\Pi_0\pi^{\lambda_0}(R_j)\pi^{\lambda_0}( R_k)).
\end{align*}
When computing the diagonal part of the operator above or, more precisely $\Pi_0 V\sigma_2 \cdot \Pi^{\lambda_0}(V)\Pi_0$, we use  $\Pi_0\pi(R_j) \pi(\bar R_k) =\Pi_0\pi(\bar R_k) \pi(R_j)=0$ when $j\not=k$ and we find 
$$
\Pi_0 V\sigma_2 \cdot \Pi^{\lambda_0}(V)\Pi_0=
\frac {4}{|\lambda_0|^2 } \sum_{j=1}^d  R_j \mathcal{L} \bar R_j a(t,x)\Pi_0\pi^{\lambda_0}(R_j)\pi^{\lambda_0}(\bar R_j).$$
Using 
 $$R_j\bar R_j=\frac 14 (P_j^2+Q_j^2) +\frac i 4 \mathcal Z^{(\lambda_0)}
\;\; \quad \text{and}\quad \;\; [R_j,\bar R_j]=\frac{i}{2}\mathcal Z^{(\lambda_0)}, $$
we obtain 
$$
R_j\mathcal{L}\bar R_j= (\mathcal L -i\mathcal Z^{(\lambda_0)}) R_j\bar R_j
\quad \text{and} \quad  \Pi_0\pi^{\lambda_0}(R_j)\pi^{\lambda_0}(\bar R_j)=
-\frac{|\lambda_0|} 2 \Pi_0
$$
and therefore
\begin{align*}
\Pi_0 V\sigma_2 \cdot \Pi^{\lambda_0}(V)\Pi_0=& -\frac 2 {|\lambda_0|} \sum_{j=1}^d (\mathcal L -i\mathcal Z^{(\lambda_0)})R_j \bar R_j a\Pi_0\\
=&  -\frac 2 {|\lambda_0|}  (\mathcal L -i\mathcal Z^{(\lambda_0)})(\frac 14 \Delta_G + \frac {id}4 \mathcal Z^{(\lambda_0)}) a \Pi_0\\
=& -\frac 1 {2|\lambda_0|}\left( i\left( \frac d2 -1\right) \mathcal Z^{(\lambda_0)}-\frac 12 \Delta_G\right) (\Delta_G   +id  \mathcal Z^{(\lambda_0)})a \Pi_0
\end{align*}
which concludes the proof of \eqref{relation3} with 
$$\widetilde{\mathcal L}=  -\frac 1 {2|\lambda_0|}\left( i\left( \frac d2 -1\right) \mathcal Z^{(\lambda_0)}-\frac 12 \Delta_G\right) (\Delta_G   +id  \mathcal Z^{(\lambda_0)})$$
that is clearly self-adjoint.
\end{proof}

In case the harmonics of the initial wave packet are no more equal to $h_0$, e.g. 
$$u^\eps_0= WP^\eps_{x_0,\lambda_0} (a,h_\alpha,h_\alpha)$$
 with $\alpha\in\N^d$ of length $n$, the operator $\Pi_n V\sigma_2 \pi(V) \Pi_n$ is not scalar: it is matricial since one must add terms of the form $(b_\beta(t,x) \pi^{\lambda_0}_x h_\alpha, h_\beta)$ for all $\beta\in\N^d$ of length $n$. Equation \eqref{e:sigma0} is then replaced by an equation with values in finite-rank operators. Setting $F(\sigma_0)=\Pi_n V\sigma_2 \pi(V) \Pi_n$, $F$ is a linear map on the set ${\mathcal S}(G, \mathcal L(V_n))$ where $V_n={\rm Vect} (h_\alpha,\;|\alpha|=n)$.  We endow this set of matrix-valued functions  with the scalar product $\langle a, b\rangle =\int_G {\rm Tr}_{  \mathcal L(V_n)} (a(x)\overline b(x)) dx$. Then, one can define two linear maps $\mathbb A$ and $\mathbb S$ such that $F=\mathbb S+\mathbb A$ with $\mathbb S$ self-adjoint, $\mathbb A$ skew symmetric and $\mathbb A\circ \mathbb S=\mathbb S \circ \mathbb A$. Observing that $\sigma_0(0)= a(x) {\rm Id}_{V_n}\in {\rm Ker} \mathbb A $, one then solves $i\partial_t \sigma_0= F(\sigma_0)$ in ${\rm Ker} \mathbb A $, which induces the solution $\sigma_0(t) = {\rm e}^{-it\mathbb S} \sigma_0(0)$. As a conclusion, noticing that the argument would be the same for 
 $$u^\eps_0= WP^\eps_{x_0,\lambda_0} (a,h_\gamma,h_\alpha)$$
 for $\alpha\not=\gamma$, we deduce   the following remark from the linearity of the equation and the  fact that the set of Hermite functions generates $L^2(\R^d)$. 
 \begin{remark} \label{r:profiles}
 The solution to~\eqref{e:Schrod} with $\mathbb V=0$ and initial data which is a wave packet is asymptotic to a wave packet in finite time. 
\end{remark}

\end{document}